\documentclass[12pt]{amsart}
\frenchspacing
\usepackage{amsthm}
\usepackage{amsmath}
\usepackage[margin=1.25in]{geometry}
\usepackage{amssymb}
\usepackage{verbatim}
\usepackage{longtable,enumitem,tikz-cd,color}
\usepackage{stmaryrd}
\usepackage{graphicx}
\usepackage{latexsym}   % use all LaTeX fonts
\usepackage{amsmath}    % include some AMS-LaTeX functionality
\usepackage{amsbsy}
\usepackage{array}
\usepackage[all]{xy}
\usepackage{hyperref}
\usepackage{caption}

%%%%% theorem, definition, remark, and equation environments %%%%%
\theoremstyle{plain}
\newtheorem{algorithm}{Algorithm}[section]

\newtheorem{definition}[algorithm]{Definition}
\newtheorem{observe}[algorithm]{Observation}
\newtheorem{example}[algorithm]{Example}

\newtheorem{lemma}[algorithm]{Lemma}

\newtheorem{main}{Theorem}

\newtheorem*{theorem*}{Theorem}
\newtheorem{theorem} [algorithm] {Theorem}

\newtheorem{proposition}[algorithm]{Proposition}
\newtheorem{remark}[algorithm]{Remark}

\numberwithin{equation}{algorithm}

\newtheorem*{MSR}{Maximal Symmetry Rank Theorem}
\newtheorem*{1/2MSR}{Half-Maximal Symmetry Rank Theorem}
\newtheorem*{Codim1}{Codimension One Lemma}
\newtheorem*{Codim2}{Codimension Two Lemma}
\newtheorem*{Codim3}{Codimension Three Lemma}
\newtheorem*{Codim4}{Codimension Four Lemma}
\newtheorem*{Borel}{Borel Formula}

\newtheorem*{CL}{Connectedness Lemma}
\newtheorem*{PL}{Periodicity Lemma}
%Greek letters

\newcommand{\ep}{\epsilon}

%%%%% Rings, fields, etc. %%%%%
\newcommand{\Z}{\mathbb{Z}}
\newcommand{\R}{\mathbb{R}}\newcommand{\C}{\mathbb{C}}\newcommand{\HH}{\mathbb{H}}

\newcommand{\RP}{\mathbb{R}\mathrm{P}}
\newcommand{\CP}{\mathbb{C}\mathrm{P}}
\newcommand{\HP}{\mathbb{H}\mathrm{P}}
\newcommand{\FP}{\mathbb{F}\mathrm{P}}

% Lie algebras
%{\ensuremath{\operatorname{\mathsf{g}}}}

% Lie groups
%\newcommand{\G}{\mathsf{G}}

%\newcommand{\D}{\ensuremath{\operatorname{\mathsf{D}}}}

\newcommand{\gO}{\ensuremath{\operatorname{\mathsf{O}}}}
\newcommand{\Sp}{\ensuremath{\operatorname{\mathsf{Sp}}}}
\newcommand{\gU}{\ensuremath{\operatorname{\mathsf{U}}}}

%\renewcommand{\S}{\ensuremath{\operatorname{S}}}

%%%%% Common functions %%%%%

\DeclareMathOperator{\diag}{diag}

\newcommand{\of}[1]{\left(#1\right)}
\newcommand{\ceil}[1]{\left\lceil #1 \right\rceil}
\newcommand{\floor}[1]{\left\lfloor #1 \right\rfloor}

\newcommand{\st}{~|~}

\newcommand{\embedded}{\hookrightarrow}

\newcommand{\codim}{{\operatorname{codim\,}}}                      % codimension
                           % image
\usepackage[T1]{fontenc}
\usepackage{makecell}

%%%%%%%%%%%%%START OF DOCUMENT INFO%%%%%%%%%%%%%%%%%%%%%%%%%%%%%%%

\title{Positive curvature and discrete abelian symmetry} 
\author{Lee Kennard}\address{\hspace{-.1in}Department of Mathematics, Syracuse University, Syracuse, NY 13244 USA}\email{ltkennar@syr.edu}
\author{Elahe Khalili Samani}\address{\hspace{-.1in}Department of Mathematics, Clark University, Worcester, MA 01610 USA}
\email{ekhalilisamani@clarku.edu}
\author{Catherine Searle}\address{\hspace{-.1in}Department of Mathematics, Wichita State University, Wichita, KS 67260 USA}\email{searle@math.wichita.edu}
\date{\today}
\subjclass{53C21; 20K01}

\begin{document}
\begin{abstract}  

By replacing the torus with an elementary abelian two-group, 
we generalize Grove and Searle's maximal symmetry result  
 and Wilking's half-maximal symmetry result 
 for positively curved manifolds with an isometric torus action.
\end{abstract}

\maketitle\smallskip

The classification of closed manifolds of positive curvature is a long-standing problem in Riemannian geometry. To date, other than some special examples in  dimensions less than or equal to $24$, all known examples are spherical in nature and are highly symmetric. Over the last 30 years, the additional assumption of  symmetries has been a
fruitful approach to such a classification.  In particular, since every continuous group contains a maximal torus, the case of torus actions has naturally attracted a great deal of attention. In turn, each torus is a product of circles and therefore contains a product of cyclic groups. Previous work on positively and non-negatively curved manifolds  with discrete symmetries of dimension $4$ can be found in Yang \cite{Yan94}, Hicks \cite{Hic97}, Fang \cite{Fan08}, and Kim and Lee \cite{KL09}, and of higher dimensions in Fang and Rong \cite{FR04},  Su and Wang \cite{SW08},  and Wang \cite{W10}. For all of these results, the focus has  been on choosing a cyclic group  or a product of cyclic groups, where the order of each cyclic group is  large enough to produce a fixed point.

In this paper, we focus on products of cyclic groups
 of order two. If the torus under consideration has rank $r$, then this subgroup is an elementary abelian two-group of $\Z_2$-rank $r$. We will denote it by $\Z_2^r$ and call it a $\Z_2$-torus. 
For reasons we discuss below, and unlike the hypotheses of the above-mentioned work on discrete symmetries, all of our results will require the assumption that the $\Z_2$-torus has a fixed point. Our first result is an easy consequence of the work of Fang and Grove in \cite{FG} on reflections in positive and non-negative curvature. 

\begin{main}\label{thm:n}
Let $M^n$ be a closed, positively curved manifold such that $\Z_2^r$ acts isometrically on $M$ with a non-empty fixed-point set. If $r \geq n$, then $r=n$ and  $M$ is equivariantly diffeomorphic to $S^r$ or $\RP^r$  with a linear $\Z_2^r$-action.
\end{main}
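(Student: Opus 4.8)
The plan is to use the fixed point to linearize the action, read off a maximal system of commuting reflections, and then invoke the reflection theory of \cite{FG}. First I would fix a point $p \in M^{\Z_2^r}$ and consider the isotropy representation $\Z_2^r \to \gO(T_pM) = \gO(n)$. This representation is faithful: an isometry of the connected manifold $M$ that fixes $p$ with trivial differential at $p$ is the identity, and the action is effective. Its image consists of pairwise commuting orthogonal involutions, hence is simultaneously orthogonally diagonalizable; after an orthonormal change of basis of $T_pM$ it lies in the diagonal subgroup $\Delta = \{\diag(\pm 1,\dots,\pm 1)\} \cong \Z_2^n$ of $\gO(n)$. Faithfulness forces $2^r = |\Z_2^r| \le |\Delta| = 2^n$, i.e. $r \le n$; combined with the hypothesis $r \ge n$ this gives $r = n$ and $\Z_2^r = \Delta$. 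In particular the isotropy representation at $p$ is the full diagonal representation of $\Z_2^n$ on $\R^n$.

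Next I would observe that $\Z_2^n = \Delta$ contains the $n$ coordinate reflections $\rho_i = \diag(1,\dots,1,-1,1,\dots,1)$ (with $-1$ in the $i$-th entry), and that each $\rho_i$ is a reflection of $M$ in the sense relevant to \cite{FG}: the fixed-point set $\mathrm{Fix}(\rho_i,M)$ is totally geodesic, and its component through $p$ has tangent space $\mathrm{Fix}(\rho_i, T_pM) = e_i^\perp$ at $p$, hence codimension one. Thus $M^n$ carries an effective isometric action of the $\Z_2$-torus $\Z_2^n$ of the maximal possible rank $n$, generated by reflections and with a fixed point.

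What remains is to identify $M$ together with this action, and this should follow from Fang and Grove's study of reflection groups: a closed, positively curved $n$-manifold admitting such a maximal-rank reflection-group action with a fixed point is, by the results of \cite{FG}, equivariantly diffeomorphic to $S^n$ or $\RP^n$ with a linear $\Z_2^n$-action; since $r = n$, this yields the stated conclusion. I expect this last step to be the main obstacle, and to consist largely of bookkeeping: matching our configuration to the precise hypotheses of \cite{FG} (in particular passing to the universal cover to land in the simply connected sphere case, where the action still fixes a point, and accounting for the non-orientable quotient that produces $\RP^n$), and then promoting a diffeomorphism to an equivariant one by checking that a $\Z_2^n$-action on $S^n$ whose isotropy representation at a fixed point is the full diagonal one is necessarily linear. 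A convenient preliminary reduction is that, in positive curvature, each $\mathrm{Fix}(\rho_i,M)$ has a \emph{unique} codimension-one component, since two totally geodesic submanifolds of $M^n$ with dimensions summing to at least $n$ must meet (Frankel); this already constrains how the mirrors $\mathrm{Fix}(\rho_1,M),\dots,\mathrm{Fix}(\rho_n,M)$ sit inside $M$. A more self-contained alternative is induction on $n$: the subgroup $\langle \rho_1,\dots,\rho_{n-1}\rangle \cong \Z_2^{n-1}$ preserves the component $N_0$ of $\mathrm{Fix}(\rho_n,M)$ through $p$, acting on $N_0^{n-1}$ effectively and with the full diagonal isotropy representation at $p$, so by induction $N_0 \cong S^{n-1}$ or $\RP^{n-1}$ linearly, and one then propagates this across $\rho_n$ using the local normal-bundle picture along $N_0$ together with the \cite{FG} structure of a reflection in positive curvature.
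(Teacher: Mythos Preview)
Your proposal is correct and follows essentially the same approach as the paper: linearize the action at a fixed point to see that $r=n$ and that $\Z_2^n$ is generated by involutions with codimension-one fixed-point sets, then invoke Fang--Grove \cite{FG}. The equivariant conclusion is already part of the Fang--Grove statement as quoted in the paper (Theorem~\ref{thm:FangGrove}), so the bookkeeping and alternative inductive route you anticipate in your final paragraph are unnecessary.
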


We remark that Theorem \ref{thm:n} is analogous to the following maximal symmetry rank result of Grove and Searle \cite{GS}, where the {\em symmetry rank} of a Riemannian manifold  equals the rank of its isometry group.

\begin{MSR}\label{MSR}\cite{GS} 
Let $M^n$ be a closed, positively curved manifold admitting an isometric and effective $T^r$-action. If $r \geq \tfrac{n}{2}$, then $r=\lceil \tfrac{n}{2}\rceil$ and $M^n$ is equivariantly diffeomorphic to $S^{2r}$, $\RP^{2r}$, $\CP^{r}$, or $S^{2r-1}/\Z_k$ for some $k \geq 3$ with a linear $T^r$-action.
\end{MSR}

We define the $\Z_2$-{\em fixed point symmetry rank} of a Riemannian manifold to be the rank of the largest $\Z_2$-torus contained in its isometry group whose action has a fixed point. Both Theorem \ref{thm:n} and the \hyperref[MSR]{Maximal Symmetry Rank Theorem} determine the corresponding maximal $\Z_2$-fixed point symmetry rank and maximal symmetry rank, respectively, of a positively curved manifold and prove rigidity up to equivariant diffeomorphism. The next result can also be viewed as an analog of the \hyperref[MSR]{Maximal Symmetry Rank Theorem}, as it includes the remaining manifolds having maximal torus symmetry, although only up to homotopy equivalence.

\begin{main}\label{thm:nover2}
Assume $\Z_2^r$ acts effectively and isometrically on a closed, positively curved manifold $M^n$ with a non-empty fixed-point set. If $n\not \in\{3, 4, 7, 11, 12, 23\}$ and
	\[r \geq \frac{n+1}{2},\] 
then one of the following holds:
	\begin{enumerate}
	\item $M$ is homotopy equivalent to $S^n$ or $\RP^n$; or
	\item 	$M$ is homotopy equivalent to $\CP^{r-1}$ or $S^{2r-1}/\Z_k$, with $k\geq 3$, and $r=\lceil \tfrac{n+1}{2}\rceil$.
		\end{enumerate}
\end{main}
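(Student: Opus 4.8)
The plan is to induct on $n$, using Theorem~\ref{thm:n} both to start the induction and to settle the maximal case, and to run in the $\Z_2$-setting the template Grove--Searle and Wilking developed for torus actions: extract a totally geodesic submanifold of small codimension carrying enough residual symmetry, identify it by induction, and transport its topology back to $M$ via the Connectedness and Periodicity Lemmas. Fix a point $p$ in the fixed-point set of $\Z_2^r$. Since $M$ is connected, the isotropy representation at $p$ is faithful, so it embeds $\Z_2^r \embedded \gO(n)$, giving $r\le n$; if $r=n$, Theorem~\ref{thm:n} identifies $M$ up to equivariant diffeomorphism with $S^n$ or $\RP^n$, so assume $\tfrac{n+1}{2}\le r\le n-1$. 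Decompose the isotropy representation into real one-dimensional weight spaces $T_pM=\bigoplus_{i=1}^n V_{\chi_i}$ with $\chi_i\in\widehat{\Z_2^r}$. For each involution $\sigma\in\Z_2^r\setminus\{e\}$, the component $N_\sigma$ through $p$ of its fixed-point set is a closed, totally geodesic, positively curved submanifold of codimension $c(\sigma)=\#\{i:\chi_i(\sigma)=-1\}$, carrying an effective isometric action of a $\Z_2$-torus of rank $r_\sigma$ with $r-c(\sigma)\le r_\sigma\le r-1$ and fixing $p$.

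The heart of the matter is to choose $\sigma$ well. If $c(\sigma)=1$ for some $\sigma$, then $M$ admits an isometric reflection, and the work of Fang--Grove in \cite{FG}, together with the presence of the full $\Z_2^r$-action, forces $M$ to be (homotopy equivalent to) $S^n$ or $\RP^n$. So assume every involution has $c(\sigma)\ge 2$. I would then establish a codimension lemma: positive curvature together with $r\ge\tfrac{n+1}{2}$ produces an involution $\sigma$ with $c(\sigma)\in\{2,3,4\}$ for which $\dim N_\sigma=n-c(\sigma)\notin\{3,4,7,11,12,23\}$ and $r_\sigma\ge\tfrac{(n-c(\sigma))+1}{2}$, so that the inductive hypothesis applies to $N_\sigma$. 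The rank bookkeeping is tight here: $r\ge\tfrac{n+1}{2}$ gives $n-2r+3\le 2$, so codimension $2$ already suffices for the inductive rank bound when $r_\sigma=r-1$; but when a first choice of $\sigma$ lands in a forbidden dimension or drops $r_\sigma$ to $r-2$, one passes to codimension $3$ or $4$ and rules out the remaining bad configurations using the combinatorics of the weights $\{\chi_i\}$ inside $\F_2^r$ together with the constraints that Frankel's theorem and the Connectedness Lemma place on the submanifolds $N_\sigma$, $N_\tau$, and $N_{\sigma\tau}$. I expect this step --- producing a usable small-codimension fixed-point component with enough residual symmetry and an allowed dimension --- to be the principal obstacle.

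Given such an $N=N_\sigma$, the inductive hypothesis identifies it up to homotopy with $S^{n-c}$, $\RP^{n-c}$, a complex projective space, or $S^{2r_\sigma-1}/\Z_k$; the Connectedness Lemma makes $N\embedded M$ $(n-2c+1)$-connected, hence carries the mod-$2$ cohomology and the fundamental group of $N$ into $M$ in a range of degrees, and the Periodicity Lemma promotes this to a genuine $\smile x$-periodicity of $H^*(M;\F_2)$ of period $c$. Poincar\'e duality propagates the periodicity to all degrees, and comparing the periodicity coming from $N$ (and, where needed, from a second such submanifold) against Poincar\'e duality in dimension $n$ --- the point at which $\HP$-type candidates are ruled out because $r$ is too large to act on them, and at which the six exceptional dimensions would otherwise produce extra possibilities --- pins $H^*(M;\F_2)$, then $H^*(M;\Z)$, down to that of $S^n$, $\RP^n$, $\CP^{r-1}$, or $S^{2r-1}/\Z_k$, with the equality $n=\dim M$ forcing $r=\lceil\tfrac{n+1}{2}\rceil$ in case~(2). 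Finally one upgrades the cohomological conclusion to a homotopy equivalence: in the $S^n/\RP^n$ case, simple connectivity (or $\pi_1=\Z_2$, controlled via the equivariant Connectedness Lemma) makes the universal cover a homotopy sphere; in the $\CP^{r-1}$ case, the generator of $H^2(M;\Z)$ yields a classifying map $M\to\CP^\infty$ which, by cellular approximation and the computed cohomology ring, is a homotopy equivalence onto $\CP^{r-1}$; and in the $S^{2r-1}/\Z_k$ case, the universal cover is a homotopy sphere carrying a free linear $\Z_k$-action, whence $M\simeq S^{2r-1}/\Z_k$. Apart from the codimension step flagged above, the remaining difficulty is the careful accounting that isolates exactly the dimensions $\{3,4,7,11,12,23\}$.
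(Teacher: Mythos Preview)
Your overall architecture matches the paper's: induct on $n$, use Theorem~\ref{thm:n} for the base and the maximal case, find a small-codimension fixed-point component $N$, apply the induction hypothesis to $N$, and lift via the Connectedness and Periodicity Lemmas and Theorem~\ref{thm:cohomology-to-homotopy}. The cohomology-to-homotopy endgame you sketch is also essentially what the paper does.

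The substantive gap is your ``codimension lemma.'' You assert that one can produce an involution $\sigma$ with $c(\sigma)\in\{2,3,4\}$, and you suggest this will come from ``positive curvature together with $r\ge\tfrac{n+1}{2}$'' and ``constraints that Frankel's theorem and the Connectedness Lemma place on the submanifolds.'' This is false as stated, and the proposed mechanism cannot work. The minimum codimension at a fixed point is a purely linear-algebraic invariant of the isotropy representation $\Z_2^r\hookrightarrow\Z_2^n$; positive curvature and Frankel-type results constrain the global topology of the $N_\sigma$ but say nothing about the minimum Hamming weight of the image. Any linear code can occur as the isotropy representation at a point. Concretely, for $n=25$ and $r=13$ there exist codes with minimum distance $6$ (see the table in the Appendix), so no involution need have codimension $\le 4$.

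What the paper actually proves, via the Hamming bound from error correcting codes (Lemma~\ref{lem:ECC}), is that $r\ge\tfrac{n+1}{2}+\delta_J(n)$ forces some involution with $c(\sigma)\le\tfrac{n+3}{4}$. This weaker bound is exactly what is needed: it is small enough that $4c(\sigma)\le n+3$, so the Connectedness Lemma and Theorem~\ref{thm:purple-OneSubmanifold} lift the topology of $N$ to $M$; and since $c(\sigma)\ge 2$, the residual rank satisfies $r-1\ge\tfrac{(n-c(\sigma))+1}{2}$, so the induction hypothesis applies. The exceptional set $J=\{3,4,7,11,12,23\}$ is not an artifact of bookkeeping at the end but emerges directly from the code tables --- these are precisely the $n$ for which $d\!\left(n,\lceil\tfrac{n+1}{2}\rceil\right)>\lfloor\tfrac{n+3}{4}\rfloor$, forcing the extra $+1$ in the rank hypothesis. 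Without the coding-theory input, you have no way to control $c(\sigma)$, and the induction does not start.
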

 
We note that both spaces in Conclusion (2) can be realized, and we show this in Example \ref{ex1}. For dimension $n = 3$, the result is not true since there are spherical space forms $S^3/\Gamma$ with non-cyclic fundamental group $\Gamma$ that admit $\Z_2^2$-symmetry with fixed point. Similarly, for dimensions $n = 6$ and $n = 8$, the symmetry assumption cannot be relaxed, as we see in Example \ref{1/4}. Finally, we note that for the excluded values of $n$ in Theorem \ref{thm:nover2}, we prove a slightly weaker result in Theorem \ref{thm:nover2PLUS}.

The conclusion of Theorem \ref{thm:nover2} is similar to that of the \hyperref[MSR]{Maximal Symmetry Rank Theorem}. However, the proof follows the strategy of the proof of Theorem 2 in Wilking \cite{Wil03}, utilizing the theory of error correcting codes applied to the isotropy representation at the fixed point. The proof  relies heavily on the  \hyperref[CL]{Connectedness Lemma} in  \cite{Wil03} and is by induction. 

Our next result is an extension of Theorem 2 in \cite{Wil03}, a half-maximal symmetry rank result for simply connected manifolds of dimension $n\geq 10$, which remains valid in all lower dimensions except 7 by Theorem A of Fang and Rong  \cite{FR03} for dimensions 8 and 9 and the \hyperref[MSR]{Maximal Symmetry Rank Theorem} for dimensions $\leq 6$. The result follows by combining  Theorems 2, 3, and 4 of \cite{Wil03} with Theorem A of \cite{FR03} and Theorems 3.4 and 3.5 in this article.

\begin{1/2MSR}  \label{1/2MSR}
Let $M^n$ be a closed, positively curved manifold admitting an isometric and effective $T^r$-action. If $n \neq 7$ and $r \geq {\frac{n}{4}} + 1$, then one of the following holds:
	\begin{enumerate}
	\item $M$ is homotopy equivalent to $S^n$, $\RP^n$, a lens space, or $\CP^{\frac n 2}$; or
	\item $M$ is homeomorphic to $\HP^{r-1}$ and $r = \frac{n}{4} + 1$.
	\end{enumerate}
\end{1/2MSR}

Unfortunately, we are unable to prove a proper $\Z_2$-analog of Theorem 2 in \cite{Wil03} that includes quaternionic projective space. However, we are able to prove the following result giving us information about the fixed-point sets of $\Z_2$-corank at most four.

\begin{main}\label{thm:nover4}
Let $M^n$ be a closed, positively curved manifold, and assume $\Z_2^r$ acts effectively by isometries on $M$ with fixed point $x$. Assume that $n \geq 15$ and that $$r \geq \frac{n+3}{4} + 1.$$ Then at least one of the following occurs:
	\begin{enumerate}
	\item For any subgroup of $\Z_2^r$ with corank at most four, the fixed-point set component at $x$, $F^m_x$, is homotopy equivalent to $S^m$, $\RP^m$, $\CP^{\frac m 2}$, or $S^m/\Z_k$ for some $k \geq 3$; or
	\item $M^n$ is a simply connected integer cohomology $\HP^{r-2}$ and $r=\tfrac{n}{4}+2$.
	\end{enumerate}
\end{main}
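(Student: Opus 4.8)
The plan is to adapt Wilking's strategy from \cite{Wil03} to $\Z_2$-tori, with the \hyperref[CL]{Connectedness Lemma} of \cite{Wil03} and the error-correcting-code estimates of Section \ref{sec:ECC} as the engine, and to induct on $n$. The starting point is the isotropy representation at $x$: it gives a faithful linear $\Z_2^r$-action on $T_xM\cong\R^n$, which splits as $\bigoplus_{i=1}^n L_{\chi_i}$ with $\chi_i\in\widehat{\Z_2^r}\cong\F_2^r$, and effectiveness forces the $\chi_i$ to span, so they generate a binary linear code $C\subseteq\F_2^n$ of dimension $r$. The usual dictionary then applies: for $H\le\Z_2^r$ of corank $k$, the component $F$ of $\mathrm{Fix}(H,M)$ through $x$ is closed, totally geodesic and positively curved; $\dim F$ is the number of $\chi_i$ lying in the annihilator $H^\perp$; the codimension of $F$ in $M$ is the support size of the associated $(r-k)$-dimensional subcode of $C$; and a residual $\Z_2$-torus, of rank between $r-k$ and $r$, acts on $F$ with $x$ a fixed point. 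The hypothesis $r\ge\frac{n+3}{4}+1$ reads $n\le 4(r-2)+1$, with the boundary case $n=4(r-2)$, i.e.\ $r=\tfrac n4+2$, being exactly where a cohomology $\HP^{r-2}$ can occur.

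The first substantive step is to run the estimates of Section \ref{sec:ECC} on $C$. These should produce a dichotomy: either (a) some subgroup of small corank has fixed-point component $F^m$ through $x$ with $2m-n$ large enough to start the periodicity machinery, or (b) $C$ is forced into the rigid configuration corresponding to $M$ being an integer cohomology $\HP^{r-2}$; in case (b), together with finiteness of $\pi_1$ (Bonnet--Myers) and a Synge-type argument to extract simple connectivity once the cohomology ring is pinned down, $n=4(r-2)$ forces $r=\tfrac n4+2$ and we land in Conclusion (2). In case (a) I would apply the periodicity theorem of \cite{Wil03} to the totally geodesic inclusion $F^m\hookrightarrow M^n$ and iterate it along a chain of fixed-point components, together with Poincar\'e duality, to conclude that $M$ has the integral cohomology ring of $S^n$, $\RP^n$, $\CP^{n/2}$, or a lens space $S^n/\Z_k$ with $k\ge 3$ --- the $\Z_2$-torus analog of the homotopy classification step of \cite{Wil03}, carried out only at the level of cohomology.

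It then remains to propagate the conclusion of case (a) to every subgroup $H\le\Z_2^r$ of corank $k\le 4$; let $F^m$ denote its fixed-point component through $x$. If $m$ is large relative to $n$, the \hyperref[CL]{Connectedness Lemma} makes $F^m\hookrightarrow M^n$ highly connected, so $F^m$ inherits from $M$ the cohomology of $S^m$, $\RP^m$, $\CP^{m/2}$, or $S^m/\Z_k$ (invoking Poincar\'e duality on $F$), and Theorem \ref{thm:cohomology-to-homotopy} upgrades this to a homotopy equivalence. If $m$ is small, $F^m$ is handled either by the classification of low-dimensional positively curved manifolds or, using that the residual $\Z_2$-torus on $F$ then has rank at least $\tfrac{m+1}{2}$, by Theorem \ref{thm:n} or Theorem \ref{thm:nover2}, yielding the same list. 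The intermediate range of $m$ is where the induction on $n$ enters: one feeds $F^m$, with its residual symmetry, back into Theorem \ref{thm:nover2} or into Theorem \ref{thm:nover4} itself (legitimate since $m<n$), after checking that $m$ avoids the excluded dimensions; and if any such application returns the quaternionic alternative for $F$, then $F$ carries a degree-four periodic class that transports to $M$ through the highly connected inclusion, which forces $M$ into case (b), i.e.\ Conclusion (2).

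The main obstacle is the code-theoretic dichotomy of the second step: extracting from $r\ge\frac{n+3}{4}+1$ precisely the two alternatives --- a fixed-point component large enough to force periodicity of $H^*(M;\Z)$, versus the single rigid "quaternionic" configuration --- with nothing in between, is the delicate point, and is exactly what the sharpened estimates of Section \ref{sec:ECC} are designed to supply. The secondary difficulty is the bookkeeping in the last step: organizing the fixed-point components of corank at most four by dimension, keeping the finitely many excluded dimensions under control through the induction, and ensuring that the $\HP$-alternative for a proper fixed-point component can arise only when $M$ itself is (cohomologically) quaternionic.
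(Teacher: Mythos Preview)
Your proposal has a genuine gap, and it stems from misreading what Conclusion (1) actually asserts. You aim in step (a) to show that $M$ itself has the integral cohomology of $S^n$, $\RP^n$, $\CP^{n/2}$, or a lens space, and then to push this down to the corank-$\le 4$ fixed-point components via the Connectedness Lemma. But Conclusion (1) makes no claim about $M$; the paper says so explicitly in the introduction: ``Conclusion (1) would follow if $M$ itself is a sphere, a real or complex projective space, or a lens space, but we can only prove this weaker statement.'' Your dichotomy therefore asks the code estimates of Section \ref{sec:ECC} to deliver something they do not (and, as far as is known, cannot) deliver.

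The induction on $n$ also does not close. If $H\le\Z_2^r$ has corank $k\le 4$, the kernel of the induced $\Z_2^r$-action on $F^m=M_x^H$ contains $H$, so the \emph{effective} residual action has rank at most $k\le 4$, not ``between $r-k$ and $r$.'' Feeding $F^m$ back into Theorem \ref{thm:nover2} or Theorem \ref{thm:nover4} then requires $4\ge\tfrac{m+1}{2}$ or $4\ge\tfrac{m+7}{4}$, i.e.\ $m\le 9$; on the other end, the inclusion $F^m\hookrightarrow M^n$ is only $(2m-n+1)$-connected, which for $n\ge 15$ leaves an unbridged range of values of $m$. The paper explicitly notes that its proof of the relevant intermediate result (Theorem \ref{thm:nover4PLUS}) does \emph{not} induct on $n$.

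What the paper does instead: fix the corank-$j$ component $F_j$ you want to analyze and build a chain \emph{upward}, $F_j\subseteq F_{j+1}\subseteq\cdots\subseteq M$, by successively releasing generators of the kernel $\Z_2^{r-j}$. The engine is not a global code dichotomy but the \hyperref[Borel]{Borel formula} applied to the $\Z_2^{r-j}$-representation on the normal space to $F_j$ (and to $F_{j+1}$): it controls the minimal codimension $k_j=\codim(F_j\subseteq F_{j+1})$ and, crucially, tells you when the number of irreducible subrepresentations is exactly $r-j$, in which case the inclusion is $m_j$-connected (Observation \ref{borel}). One then invokes the Codimension One through Four Lemmas directly on $F_j\subseteq F_{j+1}$. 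The quaternionic outcome arises only when $k_j=4$ and the representation is in the rigid diagonal form; the extra $+1$ in the hypothesis $r\ge\tfrac{n+3}{4}+1$ is exactly what excludes the intermediate possibilities ($E_\ell$, $N_\ell$, $S^3/\Gamma\times\HP^*$) that Theorem \ref{thm:nover4PLUS} would otherwise allow. You should reorganize around the Borel formula and the upward chain rather than a putative computation of $H^*(M)$.
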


We remark that Conclusion (1) follows if $M$ itself is a sphere, a real or complex projective space, or a lens space, but we can only prove this weaker statement. Regarding Conclusion (2), note that the space $\HP^{r-2}$ has $T^{r-1}$ symmetry but not $T^r$ symmetry, not even topologically by Theorem 3 in \cite{Wil03}. However, we show in Section \ref{sec:Examples} that $\HP^{r-2}$ does admit a $\Z_2^{r}$-action with a fixed point, and moreover has the additional property that some corank four subgroup has a fixed-point set component $\HP^2$.

Observe that the symmetry assumption in Theorem \ref{thm:nover4} cannot be relaxed to $r \geq \tfrac{n+2}{4} + 1$, because there exists a positively curved metric on $\CP^{2r-3}/\Z_2$ with $\Z_2^r$ symmetry, with the property that $\Z_2^r$ fixes a point $x$ and there exists a $\CP^5/\Z_2$ containing $x$ that is the fixed-point set component of a $\Z_2$-corank four subgroup, see Example \ref{1/4}.   In this example, there is furthermore a $\Z_2^{r-3}$ whose fixed-point set component at $x$ is $\CP^3/\Z_2$, which is also not one of the spaces in Conclusion (1) of Theorem \ref{thm:nover4}. 
Similarly, we also show in Example \ref{1/4} that $S^{4r-5}/\Gamma$, where $\Gamma$ is  a finite, non-cyclic subgroup of $\Sp(1)$, has $\Z_2^r$ symmetry, a fixed point, and a subgroup $\Z_2^{r-2}$ with non-standard fixed-point set component $S^3/\Gamma$. We conjecture that these models might be the only ones that arise if the symmetry assumption in Theorem \ref{thm:nover4} is relaxed slightly, and we make partial progress toward a solution of the same in Theorem \ref{thm:nover4PLUS}.

Finally, we note that Conclusion (2) of Theorem \ref{thm:nover4} occurs only if $r = \tfrac{n}{4} + 2$. Moreover, in this case, the fixed-point set components of subgroups of corank at most three are as in Conclusion (1). In particular, one sees from Theorem \ref{thm:nover4} that every fixed-point set component $F^m$ of the $\Z_2^r$-action is a $\Z_2$-cohomology $S^m$, $\RP^m$, or $\CP^{\frac m 2}$. In the torus case and by switching to rational coefficients, a similar result was obtained by the first author in \cite{Ken13} under the assumption that $n \equiv 0 \bmod 4$ and that the rank $r$ of the torus is a logarithmic function of the dimension.  
More recently, the first author, Wiemeler, and Wilking in \cite{KWW} show that the assumption on $n$ is not necessary and that the assumption on $r$ can be relaxed to $r \geq 5$, see also Nienhaus \cite{N}. It would be interesting to explore the extent to which Theorem \ref{thm:nover4} can be improved, while noting that Theorem F of \cite{KWW} does not extend to $\Z_2$-torus representations.

As a bonus, Lemmas \ref{lem:Z2MSRlens} and \ref{lem:Z2MSRform} give us the following bounds on the maximal $\Z_2$-fixed point symmetry rank  for manifolds of positive curvature with non-trivial fundamental groups. Both estimates are sharp, see Examples \ref{ex1} and \ref{1/4}, respectively. 
\begin{main}\label{thm:pi1} Let $M$ be a closed, positively curved Riemannian manifold admitting an isometric and effective $\Z_2^r$-action 
with a fixed point. Then the following hold:
\begin{enumerate}
\item If $\pi_1(M)$ is neither trivial nor $\Z_2$, then $r\leq \tfrac{n+1}{2}$; and 
\item If $\pi_1(M)$ is a non-cyclic finite group, then $r\leq \tfrac{n+5}{4}$.
\end{enumerate}
\end{main}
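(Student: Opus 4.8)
The plan is to obtain both inequalities from the structure theorems already in hand — Theorems \ref{thm:n}, \ref{thm:nover2}, and \ref{thm:nover4} (with Theorems \ref{thm:nover2PLUS} and \ref{thm:nover4PLUS} covering the exceptional dimensions) — by computing the fundamental groups of the spaces appearing in their conclusions. Each such space — $S^n$, $\RP^n$, $\CP^{r-1}$, a lens space $S^{2r-1}/\Z_k$, or a simply connected cohomology $\HP^{r-2}$ — has cyclic fundamental group ($1$, $\Z_2$, $1$, $\Z_k$, or $1$, respectively), and $\pi_1$ is a homotopy invariant; so each of the hypotheses ``$\pi_1(M)$ neither trivial nor $\Z_2$'' and ``$\pi_1(M)$ non-cyclic finite'' obstructs $M$ from being one of these models for the relevant value of $r$. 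By Bonnet--Myers $\pi_1(M)$ is finite in both cases, and by Synge's theorem a closed positively curved manifold of even dimension has fundamental group trivial or $\Z_2$; hence for $n$ even both hypotheses are vacuous, and we may assume $n$ is odd throughout. The two estimates are then exactly Lemma \ref{lem:Z2MSRlens} and Lemma \ref{lem:Z2MSRform}, respectively.

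For (1) I argue by contraposition. Suppose $r > \tfrac{n+1}{2}$; then $r \geq \tfrac{n+1}{2}$, so for $n \notin \{3,7,11,23\}$ (the odd dimensions not excluded from Theorem \ref{thm:nover2}) that theorem shows $M$ is homotopy equivalent to $S^n$, $\RP^n$, $\CP^{r-1}$, or $S^{2r-1}/\Z_k$. A homotopy $S^{2r-1}/\Z_k$ has dimension $2r-1$, so that case would force $r = \tfrac{n+1}{2}$, which is excluded; thus $\pi_1(M)$ is trivial or $\Z_2$. Contrapositively, if $\pi_1(M)$ is neither trivial nor $\Z_2$, then $r \leq \tfrac{n+1}{2}$. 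The dimension $n = 3$ is handled by Theorem \ref{thm:n} (if $r \geq 3 = n$, then $M \simeq S^3$ or $\RP^3$), and $n \in \{7,11,23\}$ by the weaker Theorem \ref{thm:nover2PLUS}; sharpness of $r = \tfrac{n+1}{2}$ is realized by the lens spaces of Example \ref{ex1}.

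For (2), recall $\pi_1(M)$ is non-cyclic by hypothesis, and suppose for contradiction that $n \geq 15$ and $r \geq \tfrac{n+3}{4}+1$. Theorem \ref{thm:nover4} then offers two alternatives. Its Conclusion (2) would make $M$ simply connected, contradicting that $\pi_1(M)$ is non-cyclic; so Conclusion (1) holds, i.e., for every subgroup $H \leq \Z_2^r$ of corank at most four the fixed-point-set component $F_H$ at $x$ is a homotopy $S^m$, $\RP^m$, $\CP^{m/2}$, or $S^m/\Z_k$, and in particular has cyclic fundamental group. Descending along a suitable flag of subgroups — the chain of fixed-point-set components used in the proof of Theorem \ref{thm:nover4}, at each step of which the \hyperref[CL]{Connectedness Lemma} applies because the error-correcting code estimates of Section \ref{sec:ECC} bound the successive codimensions by less than half the ambient dimension — one exhibits $\pi_1(M)$ as a quotient of $\pi_1(F_H)$ for some $H$ of corank at most four, so $\pi_1(M)$ is cyclic, again a contradiction. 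Hence $r < \tfrac{n+3}{4}+1$; since $n$ is odd, this forces $r \leq \tfrac{n+5}{4}$. The dimensions $n < 15$ are handled directly — by Theorem \ref{thm:nover2} or Theorem \ref{thm:nover4PLUS} when $r$ is large enough to invoke them, and otherwise by the classification of positively curved manifolds in low dimensions — and sharpness of $r = \tfrac{n+5}{4}$ is realized by the spherical space forms $S^{4r-5}/\Gamma$ with $\Gamma < \Sp(1)$ non-cyclic of Example \ref{1/4}.

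The delicate step is the reduction in (2). Unlike (1), where Theorem \ref{thm:nover2} hands back the homotopy type of $M$ outright, in the quarter-maximal range Theorem \ref{thm:nover4} controls only the fixed-point-set components of small-corank subgroups, so one must produce an explicit flag of subgroups of corank at most four along which the successive fixed-point-set components lose less than half the ambient dimension at each stage — precisely what is needed for the \hyperref[CL]{Connectedness Lemma} to carry $\pi_1$-surjectivity up to $M$ — and this is exactly where the sharp error-correcting code estimates of Section \ref{sec:ECC} do the real work. A lesser nuisance is dovetailing the resulting inequality with the parity of $n$ so as to land on the exact constant $\tfrac{n+5}{4}$ rather than a weaker one, and clearing the low-dimensional and exceptional cases by hand.
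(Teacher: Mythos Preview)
You correctly note that Theorem~\ref{thm:pi1} is precisely the content of Lemmas~\ref{lem:Z2MSRlens} and~\ref{lem:Z2MSRform}, which is how the paper derives it. However, the paper proves those lemmas directly by a short induction on the dimension: at each step one applies the error-correcting code estimate (Part~1 or Part~2 of Lemma~\ref{lem:ECC}) to produce an involution whose fixed-point component has codimension at most $\tfrac{n+3}{4}$ or $\tfrac{n-1}{2}$, and then either invokes the Codimension One/Two/Three Lemmas (when the codimension is small) or the induction hypothesis (when it is at least $2$, respectively $4$), transporting $\pi_1$ via the Connectedness Lemma. Neither proof goes through Theorems~\ref{thm:nover2} or~\ref{thm:nover4}; in fact Lemma~\ref{lem:Z2MSRlens} is \emph{used} in the proof of Theorem~\ref{thm:nover2PLUS}, so your route for Part~(1) reverses the paper's logical order (though it is not circular, since you may cite Theorem~\ref{thm:nover2} as a black box, and your contrapositive argument there is valid).

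For Part~(2) there is a genuine gap. Conclusion~(1) of Theorem~\ref{thm:nover4} constrains only the fixed-point components $F_H$ of subgroups $H\leq\Z_2^r$ of corank at most four; these are small submanifolds deep inside $M$, not $M$ itself. To conclude that $\pi_1(M)$ is cyclic you need a chain of inclusions from some such $F_H$ up to $M$ --- a chain of length $r-4$, not $4$ --- with each step of codimension small enough for the Connectedness Lemma to carry $\pi_1$. The proof of Theorem~\ref{thm:nover4PLUS} builds such a chain only in the $\HP^{r-2}$ case (Conclusion~(4)); in the cases leading to Conclusion~(1) it does not. Your appeal to ``the error-correcting code estimates bound the successive codimensions by less than half the ambient dimension'' is exactly the assertion that needs proof, and proving it --- iterating Part~2 of Lemma~\ref{lem:ECC} while disposing of codimension $1$, $2$, $3$ via the Codimension Lemmas --- is the inductive argument of Lemma~\ref{lem:Z2MSRform} itself. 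The low-dimensional cases also fail: at $n=7$, $r=4$ (where one must show $r\leq 3$) none of Theorems~\ref{thm:n}, \ref{thm:nover2}, \ref{thm:nover2PLUS}, \ref{thm:nover4} applies, and there is no classification of positively curved $7$-manifolds to invoke.
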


Some remarks on the assumed existence of fixed points are in order. First, for closed manifolds of positive curvature, a result of Berger \cite{Ber} implies that every subgroup of the torus has non-empty fixed-point set in even dimensions, while a result of Sugahara \cite{Sug82} (see also \cite{GS}) shows that every torus action has a circle orbit in odd dimensions. Second, each component of the fixed-point set of a subtorus inherits a torus action from the ambient manifold since tori are connected, abelian groups. When passing from the torus case to the $\Z_2$-torus case, we lose both the existence of fixed-point set components and of an induced $\Z_2$-torus action.
 In fact, the theorem of \cite{Ber} does not strictly generalize to $\Z_2$-tori since, for example, even-dimensional spheres admit free and isometric $\Z_2$-actions. Likewise, the result of \cite{Sug82} does not generalize either, as both $(4k+3)$-dimensional real projective spaces in Example \ref{1/4}, and Eschenburg spaces,  see Shankar \cite{Sh}, admit free and isometric $\Z_2^2$-actions.  So we assume throughout that our $\Z_2$-tori have non-empty fixed-point sets. One advantage of this assumption is that it also implies that the $\Z_2$-torus acts invariantly on each of those fixed-point set components containing a point fixed by the whole $\Z_2$-torus. These observations are crucial to generalizing the arguments from the torus case.

One final remark illustrates why the proof techniques in the torus case with rank $r$ roughly correspond to those for a $\Z_2$-torus of rank $2r$. This doubling is based on the simple observation that the lowest dimensional irreducible representation of a circle is of dimension two, while for $\mathbb{Z}_2$, it is of dimension one. More specifically, the maximal symmetry rank bound is roughly doubled for the 
$\Z_2$-fixed point maximal symmetry rank. Likewise,  the induction setup and use of error correcting codes in \cite{Wil03} for a torus  of rank $r \approx \tfrac n 4$ are used in our proof of Theorem \ref{thm:nover2} for a $\Z_2$-torus of rank $r \approx \tfrac n 2$. Finally, the method of proof in work of Rong and Su \cite{RS05} for a torus of rank $r \approx \tfrac n 8$ motivates the proof of Theorem \ref{thm:nover4} with a $\Z_2$-torus of rank $r \approx \tfrac n 4$. 

\subsection*{Organization}

Section \ref{sec:ECC} contains the 
estimates from the theory of error correcting codes that will be used throughout most of the paper. Section \ref{sec:TransformationGroups} reviews results on $\Z_2$-torus actions on spaces with singly generated $\Z_2$-cohomology. Section \ref{sec:ConnectednessLemma} recalls the  Connectedness and Periodicity Lemmas in \cite{Wil03}, summarizes some of the applications of these ideas, and contains proofs of the corresponding refinements needed for the proofs of the main theorems. In Sections \ref{sec:n}, \ref{sec:nover2}, and \ref{sec:nover4}, we prove Theorems \ref{thm:n}, \ref{thm:nover2}, and \ref{thm:nover4}, respectively. Section \ref{sec:Examples} includes a summary, in order of decreasing symmetry, of the examples of positively curved manifolds with $\Z_2$-torus actions that illustrate these results.  Finally, Appendix \ref{sec:Appendix} contains a table of error correcting codes used in the proof of one of the results in Section \ref{sec:ECC} for easy reference.

\subsection*{Acknowledgements}

The authors are grateful to Tibor Macko for helpful conversations. They are also grateful to Austin Bosgraaf for a careful reading of the article. This material is based upon work supported by the National Science Foundation under Grant DMS-1440140 while the authors participated in a program hosted 
by the Mathematical Sciences Research Institute in Berkeley, California, during the summer of 2021.  L. Kennard was partially supported by NSF Grant DMS-2005280 and Simons Foundation Award MPTSM-00002791. C. Searle was partially supported by NSF Grant DMS-1906404 and DMS-2204324 and is grateful to Syracuse University for their hospitality during two visits while a part of this work was completed.

\smallskip\section{Estimates for error correcting codes}\label{sec:ECC}

In this section, we provide estimates derived from the theory of error correcting codes, as well as  applications of the same to the theory of positively curved manifolds. Before we begin, we recall the definition of the Hamming weight and the Hamming distance. When it is clear from context, we will refer to each of them simply as {\em weight} and {\em distance}, respectively.

\begin{definition}[{\bf Hamming weight}] Let $\iota\in \Z_2^n$ be an involution. The {\em Hamming weight} of $\iota$, denoted by $|\iota|$, is the number of non-trivial entries in $\iota$. The {\em Hamming distance} between two involutions $\iota_1, \iota_2\in \Z_2^n$ is $|\iota_1\iota_2|$. 
\end{definition}

In the theory of error correcting codes, one considers upper bounds on the minimum Hamming weight of the elements in the image of an injective, linear map $\Z_2^r \to \Z_2^n$. The optimal upper bound is denoted throughout this section by $d(n, r)$. 

Note that  in the case where $N$ is a fixed-point set component of an involution $\iota$ acting on a manifold $M$, where $\iota$ is regarded as a linear map on $T_xM$ for some $x \in N$, $\codim(N)$ equals the Hamming weight of the involution. 

The main result we use from the theory of error correcting codes is the following improvement of the Elias-Bassalygo bound,  which follows immediately from Proposition 3.1 in \cite{Wil03}, together with an argument in the proof of Corollary 3.2 in \cite{Wil03}.

\begin{theorem}\cite{Wil03}\label{thm:ECCWilking}
Fix $\delta \in (0,\tfrac 1 2)$. If $\rho:\Z_2^r \to \Z_2^n$ is a (not necessarily linear) injection such that $|\rho(\iota)| \geq \delta n$ for all non-trivial $\iota \in \Z_2^r$, then
	\[r \leq \of{1 - H(J(\delta))} n + 1.5 \log_2 n + 0.5\log_2\of{\frac{1-J(\delta)}{J(\delta)}} + 1.5,\]
where $J(\delta) = \tfrac 1 2\of{1 - \sqrt{1 - 2\delta}}$ and $H(\epsilon) = -\epsilon \log_2 \epsilon - (1-\epsilon)\log_2(1-\epsilon)$.
\end{theorem}

The function $J$ is related to the Johnson bound in the theory of error correcting codes, and $H$ is the entropy function. For our purposes, we only need the facts that $J(\delta)$ is an increasing function that maps the interval $[0,\tfrac 1 2]$ onto itself and that $1-H(\epsilon)$ is decreasing on the interval $0 \leq \epsilon \leq \tfrac 1 2$.

Here we sketch the arguments to avoid any confusion. To prove Theorem \ref{thm:ECCWilking},  we first need the following lemma which is contained in the proof of Proposition 3.1 in \cite{Wil03}.

\begin{lemma}\label{ECC}
Given $k$ vectors in the Hamming sphere $S_w(0)$ of radius $w$ about the origin in $\Z_2^n$ such that the distance between any two is at least $b$, then $k \leq n + 1$ provided $w < J\of{\tfrac b n} n$.
\end{lemma}

\begin{proof}[Sketch of the Proof of Lemma \ref{ECC}] The proof of Proposition 3.1 in \cite{Wil03} involves mapping the 
Hamming sphere $S_w(0) \subseteq \Z_2^n$ to a Euclidean $S_r(0) \subseteq \R^n$.
To do this, define $f:S_w(0) \to \R^n$ component-wise by $(x_i) \mapsto  \of{\tfrac{x_i}{w} - \tfrac{1-x_i}{n-w}}$. For $x \in S_w(0)$, the Hamming weight of $x$ is $w$, and the Euclidean norm of $f(x)$ is $r = \sqrt{\tfrac{n}{w(n-w)}}$. Similarly, given two vectors $x$ and $y$ among the $k$ vectors in the assumption, their Euclidean inner product satisfies
	\[\langle f(x), f(y) \rangle = 1 - \frac{r^2}{2} d(x,y) \leq 1 - \frac{r^2}{2} b,\]
where $d(x,y)$ denotes the Hamming distance and where $d(x,y) \geq b$ by assumption. One can verify that the right hand side is negative provided $w < J\of{\tfrac b n} n$, so the Euclidean angle between $x$ and $y$ is strictly larger than $\pi/2$. After scaling, we get $k$ vectors on the unit sphere in $\R^n$ such that the angle between any two is larger than $\pi/2$. By an elementary induction argument over $n$, one can show that this condition implies $k \leq n + 1$.
\end{proof}

\begin{proof}[Proof of Theorem \ref{thm:ECCWilking}]
Set $b = \delta n$ and $w_0 = \ceil{J(\delta) n} - 1$. Note that $d(x,y) \geq b$ for all pairs of distinct points in the image of $\rho$ and that $w_0$ is an integer less than $J(\delta) n$.

Set $C = \rho(\Z_2^r)$, and consider the set $\{(c,x) \in C \times \Z_2^n \st d(c,x) = w_0\}$. On the one hand, the number of elements in this set is $\sum_{c \in C} |S_{w_0}(c)| = 2^r \binom{n}{w_0}$. On the other hand, the number of elements is $\sum_{x \in \Z_2^n} |\{c \in C \st d(c,x) = w_0\}|$. But $\{c \in C \st d(c,x) = w_0\}$ maps bijectively to $\{c' \in x + C \st d(c',0) = w_0\}$ via $x \mapsto c+x$. Moreover, the latter set has the properties that all elements have weight $w_0$ and that any two have distance at least $b$ since translations in $\Z_2^n$ are isometries of the Hamming distance. Since $w_0 < J(\delta)n$, the lemma implies that each of these sets has at most $n + 1$ elements. Therefore we have the bound
	\[2^r \binom{n}{w_0} \leq 2^n (n+1).\]
Next, we set $w = w_0+1$ and rewrite this inequality as
	\[2^r \binom{n}{w} \leq 2^n\left(\frac{n-w+1}{w}\right)(n+1).\]
We now prove the result assuming $2 \leq w \leq \tfrac n 2$ and come back to the other cases at the end. Since $w \geq 2$, we may estimate the right-hand side to get
	\[2^r \binom{n}{w} \leq 2^n \frac{n^2}{w}.\]
On the left-hand side, we use the estimate
	\[\binom{n}{w} \geq 2^{H(\epsilon) n} / \sqrt{8(\epsilon)(1-\epsilon)n},\]
coming from Stirling's formula (see Lemma 4.7.1 in Ash \cite{Ash}), where $\epsilon = w/n$. Putting this together with the previous inequality, we have
	\[r \leq \of{1 - H\of{\frac w n}} n + 1.5 \log_2 n + \log_2\of{\frac{n-w}{w}} + 1.5.\]
Since $w \leq \tfrac n 2$ and $1 - H(\epsilon)$ is decreasing on the interval $[0,\tfrac 1 2]$, the entire right-hand side of this expression is decreasing in $w$.  Since $w \geq J(\delta) n$, we may substitute $J(\delta) n$ for $w$ in this bound to obtain the theorem. 

To finish the proof, we deal with the edge cases $w < 2$ and $w > \tfrac n 2$. The former implies that $w_0 = 0$ and hence that $r = 0$ because of the fact that the set $\{c' \in x + C \st d(c', 0) = w_0\}$ can only have one element of weight zero. The claimed upper bound on $r$ in the theorem is then easy to verify. The latter case implies that $w_0 \geq \tfrac{n-1}{2}$ and hence that $J(\tfrac b n) n > \tfrac{n-1}{2}$. Solving for $\delta = b/n$ in this expression, we obtain $\delta \geq \tfrac 1 2$. This contradicts the assumptions of the theorem, so the latter case does not occur.
\end{proof}

We now use Theorem \ref{thm:ECCWilking} together with some additional bounds from the theory of error correcting codes to prove the main lemma we need in our applications. We let
	\[B = \{4, 12\} \cup \{3, 7, 11, 23\}\]
denote the set of dimensions missing from Theorem \ref{thm:nover2}, and we define $\delta_B(n)$ to be one for $n \in B$ and zero otherwise. 

\begin{lemma}\label{lem:ECC}
Let $M^n$, $n\geq 3$, be an $n$-dimensional Riemannian manifold admitting an effective, isometric $\Z_2^r$-action 
with a fixed point $x$. Then the following hold:
       \begin{enumerate}[font=\normalfont]
       \item If $r\geq\tfrac{n+1}{2}+\delta_B(n)$, there is an involution $\iota\in\Z_2^r$ 
       such that $\codim(M_x^{\iota})\leq\tfrac{n+3}{4}$;
       \item If $r\geq \tfrac{n+7}{4}$ and if $n$ is odd, 
       then there exists an involution $\iota\in\Z_2^r$ 
       such that $\codim(M_x^{\iota})\leq \tfrac{n-1}{2}$.
              \end{enumerate}
       \end{lemma}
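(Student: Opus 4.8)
The strategy is to apply Lemma~\ref{lem:ECC-Hamming} to the isotropy representation $\rho\colon \Z_2^r \to \gO(T_xM) \cong \gO(n)$ at the fixed point $x$. Since the action is effective and $x$ is fixed, $\rho$ is faithful, so we obtain an injective linear map $\Z_2^r \to \Z_2^n$ by diagonalizing the commuting involutions and recording their $\pm 1$ eigenvalue patterns; under this identification, for an involution $\iota \in \Z_2^r$ the codimension $\codim(M_x^\iota)$ equals the Hamming weight $|\iota|$, as noted in the remark following the definition of Hamming weight. Thus if every nonzero element of the image has weight $\geq d$, Lemma~\ref{lem:ECC-Hamming} forces
\[
r < (1-H(\ep))\,n + \tfrac12\log_2(2n), \qquad \ep = \tfrac1n\floor{\tfrac{d-1}{2}}.
\]
Contrapositively, if $r$ is as large as hypothesized in each of (1)--(3) and yet no involution $\iota$ achieved the claimed codimension bound, then $d$ would exceed the stated threshold, and we would reach a contradiction with this inequality. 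So the proof reduces, in each case, to a purely numerical verification that the function $n \mapsto (1-H(\ep))n + \tfrac12\log_2(2n)$ (with the appropriate value of $d$) stays strictly below the hypothesized lower bound on $r$.

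For part (1): suppose for contradiction that every involution has $\codim(M_x^\iota) > \tfrac{n+3}{4}$, i.e.\ minimum weight $d \geq \ceil{\tfrac{n+3}{4}}+1$ (using integrality of codimensions); plug the corresponding $\ep$ into the bound and check that it contradicts $r \geq \tfrac{n+1}{2}+\delta_J(n)$ for all $n \geq 3$ with $n \notin J$, and separately for $n \in J$ where the extra $+1$ buys the needed slack. Part (2) is the same argument with $d \geq \tfrac{n-1}{2}+2 = \tfrac{n+3}{2}$ (so $\ep$ is close to $\tfrac14$) against $r \geq \tfrac{n+7}{4}$, restricted to odd $n \geq 5$. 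Part (3) handles the two small exceptional dimensions $n=4$ (where $r\geq 3 = \tfrac{n+8}{4}$, so we want some $\iota$ with weight $\leq 2$) and $n=12$ (where $r \geq 7$, wanting weight $\leq 4$) by the same mechanism, but since $n$ is small the asymptotic estimate may be too weak, so here one argues directly from the Hamming bound $2^n \geq 2^r\sum_{i=0}^t \binom{n}{i}$ with the explicit small binomial coefficients rather than via the entropy approximation.

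The main obstacle will be the numerical estimates, and in particular making the entropy bound sharp enough near the boundary cases. The quantity $1 - H(\ep)$ degenerates as $\ep \to \tfrac12$, which is exactly the regime relevant to part (1) (there $d \approx \tfrac{n}{4}$ gives $\ep \approx \tfrac18$, comfortably away from $\tfrac12$, so this is fine) — the genuinely delicate point is instead the low-dimensional tail of each inequality, where $\tfrac12\log_2(2n)$ is not negligible compared to $(1-H(\ep))n$. For these finitely many small $n$ one cannot rely on the clean asymptotic form of Lemma~\ref{lem:ECC-Hamming} and must return to the exact Hamming bound; this is presumably why $J$ and the separate clause~(3) appear, and why the error-correcting-code tables in the Appendix are invoked. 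I would organize the proof as: (i) reduce to the weight inequality via the isotropy representation; (ii) dispatch the generic range of each part with the entropy estimate; (iii) finish the finitely many residual dimensions by direct inspection of the exact Hamming bound, citing the Appendix tables.
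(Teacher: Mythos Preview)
Your plan is essentially identical to the paper's proof: reduce via the isotropy representation to a coding-theoretic statement, kill the large-$n$ range with the entropy form of Lemma~\ref{lem:ECC-Hamming}, and finish the small-$n$ tail by the exact Hamming bound and the Appendix tables (the paper also inserts an intermediate range, roughly $68<n<112$ for Part~(1), handled by the raw Hamming inequality keeping only the top two binomial terms). One minor correction: your integer thresholds for $d$ are slightly off---the negation of $\codim\leq\tfrac{n+3}{4}$ gives $d\geq\lceil\tfrac{n+4}{4}\rceil$ (not $\lceil\tfrac{n+3}{4}\rceil+1$), and in Part~(2) the negation gives $d\geq\tfrac{n+1}{2}$ for odd $n$ (not $\tfrac{n+3}{2}$); these slips do not affect the strategy but would matter when you actually run the numerics.
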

       
\begin{proof}
Consider the isotropy representation $\rho:\Z_2^r\to\Z_2^n$ at $x$. Recall that the Hamming weight of the image of an involution equals the codimension of its fixed-point set.

For Part (2), if all codimensions are at least $\tfrac{n+1}{2}$, then the Griesmer bound implies
       	\[n \geq \sum_{i=0}^{r-1} \ceil{\frac{n+1}{2^{i+1}}} \geq n+1 - \frac{n+1}{2^r}.\]
Since $r \geq \tfrac{n+7}{4}$, this implies
	\[\log_2(n + 1) \geq \frac{n+7}{4}.\]
This is a contradiction for all $n \geq 3$, so Part (2) follows.

Finally, we prove Part (1) using a combination of Theorem \ref{thm:ECCWilking}, an elementary estimate due to Hamming, and  
the use of code tables for small $n$. Assume that $\codim(M^\iota_x) > \tfrac{n+3}{4}$ for all non-trivial $\iota \in \Z_2^r$. 

First assume $n \geq 128$. By assumption, $|\rho(\iota)| \geq \tfrac 1 4 n$ for all non-trivial $\iota \in \Z_2^r$, so we may apply Theorem \ref{thm:ECCWilking} with $\delta = \tfrac 1 4$. 
The theorem implies that
	\[r \leq \of{1 - H(J(\delta))} n + 1.5 \log_2 n + \log_2\of{\frac{1 - J(\delta)}{J(\delta)}} + 1.5.\]
A calculation shows that $1 - H(J(\delta)) < 0.4$ and that the sum of the two constant terms is less than $2.772$. Combining this inequality with the estimate $r \geq \tfrac{n+1}{2}$, we have
	\[\frac{n+1}{2} \leq 0.4 n + 1.5 \log_2 n + 2.772.\]
One can see that this inequality fails for $n \geq 128$, so this completes the proof of Part (1) for values in this range.

Next, suppose that $68 < n < 128$. The elements in $\rho(\Z_2^r) \subseteq \Z_2^n$ have mutual distance at least $\tfrac{n+4}{4}$. In particular, the (closed) Hamming balls of radius $t = \floor{\tfrac 1 2\ceil{\tfrac n 4}}$ around each of these $2^r$ points are disjoint. Since the volume of each of these balls is larger than
	\[\binom{n}{t} + \binom{n}{t-1},\]
we obtain the estimate
	\[2^n > 2^r \of{\binom{n}{t} + \binom{n}{t-1}},\]
which is essentially the Hamming bound from the theory of error correcting codes. Now $r \geq \ceil{\tfrac{n+1}{2}}$, so we obtain
	\[n - \ceil{\frac{n+1}{2}} > \log_2\of{\binom{n}{t} + \binom{n}{t-1}}.\]
Calculating both sides for all values of $n$ in this range, we obtain a contradiction. This finishes the proof of Part (1) for all $n > 68$. 

Finally, for $n \leq 68$, we use the code tables included in Appendix \ref{sec:Appendix} to conclude that some involution exists with weight equal to $d(n,r)$ with $r = \ceil{\tfrac{n+1}{2}}$ and moreover that $d(n,r) \leq \tfrac{n+3}{4}$ for all values of $n$ in this range. This completes the proof of Part (1).
\end{proof}
      
       In the proof of the following lemma we use the technique of {\em shortening}, which we now describe.
      Let  $\rho:\Z_2^r\to\Z_2^n$ denote the isotropy representation of $\Z_2^r$ in $\Z_2^n$ and let $\iota_1\in \Z_2^r$.
Without loss of generality, we may assume that the first component of $\rho(\iota_1)$ is non-trivial. 
Let $\ker(\pi_1\circ \rho)=\Z_2^{r-1} \subseteq\Z_2^r$ denote the kernel of  $\pi_1\circ \rho$, where $\pi_1:\Z_2^n \to \Z_2$ is the  projection onto the first component. 
 Let $\bar{\rho}:\Z_2^{r-1} \to \Z_2^{n-1}$ be the composition of $\rho|_{\ker(\pi_{1}\circ \rho)}$ 
with the projection onto the last $n-1$ components. By our choice of subspace,  
we have $|\rho(\iota)|=|\bar{\rho}(\iota)|$ for $\iota\in\ker(\pi_{1}\circ \rho)$. In particular, it follows that $d(n, r)\leq d(n-1, r-1)$.

      \begin{lemma}\label{lem:ECCshorten} Let $\rho:\Z_2^r\hookrightarrow \Z_2^n$. Then the following hold:
      \begin{enumerate}
     
     \item If $r \geq 4$ and $n = 6$ or if $r \geq 8$ and $n \in \{14,15\}$,
      then for any non-trivial $\iota_1 \in \Z_2^r$, 
      there exists $\iota_2 \in \Z_2^r \setminus \langle\iota_1\rangle$ such that
      $|\rho(\iota_2)|$ is at most $2$ or $6$, respectively.

  \item  
  If $r \geq 5$ and if $n$ is $10$, $11$, or $12$, then
  for any non-trivial $\iota_1 \in \Z_2^r$,
  there exists $\iota_2 \in \Z_2^r \setminus \langle\iota_1\rangle$ such that 
  $|\rho(\iota_2)|$ is at most $4$, $4$, or $5$, respectively.
  
            \item        If $(r, n) = (5,11)$ and $\iota_1 \in \Z_2^5$ such that $|\rho(\iota_1)| = 4$, then
      	\begin{enumerate}
	\item There exists a non-trivial $\iota_2 \in \Z_2^5$ such that $|\rho(\iota_2)| \leq 3$; or
	\item There exist $\tau_1,\tau_2 \in \Z_2^5$ such that $|\rho(\tau_1)| = |\rho(\tau_2)| = 4$ and $|\rho(\tau_1\tau_2)| < 8$.
	\end{enumerate}
       \end{enumerate}
\end{lemma}

\begin{remark} In Part (3) it may not be possible  to choose  $\iota=\iota_1$, as the following example shows. For the map $\Z_2^5 \to \Z_2^{11}$ given by
	\[
	\begin{array}{rcl}
	\iota_1	&\mapsto&	(1,1,1,1,0,0,0,0,0,0,0)\\
	\iota_2	&\mapsto&	(1,1,0,0,1,1,1,1,1,1,1)\\
	\iota_3	&\mapsto&	(0,0,0,0,1,1,1,1,0,0,0)\\
	\iota_4	&\mapsto&	(0,0,0,0,1,1,0,0,1,1,0)\\
	\iota_5	&\mapsto&	(0,0,0,0,1,0,1,0,1,0,1)\\
	\end{array}
	\]
the image of $\iota_1$ has weight four, but there is no involution of weight three and there is no second involution of weight four whose product with $\iota_1$ has weight less than eight. Note however the conclusion still holds by choosing $\tau_1 = \iota_3$ and $\tau_2 = \iota_4$, for example.
\end{remark}

\begin{proof}[Proof of Lemma \ref{lem:ECCshorten}]

For Part (1), 
we apply the shortening technique to find an $\iota_2\not\in \langle\iota_1\rangle$ such that $|\rho(\iota_2)|$ is at most $d(5,3)$ when $n = 6$ or at most $d(n-1,7)$ when $n \in \{14,15\}$. A direct computation or an application of the Griesmer bound shows that $d(5,3) \leq 2$ and $d(13,7) \leq d(14,7) \leq 6$, as needed.

For Part (2), 
we apply the shortening technique to find an $\iota_2\not\in \langle\iota_1\rangle$ such that $|\rho(\iota_2)|\leq d(n-1, 4)$.
Again a direct computation or an application of the Griesmer bound shows that $d(11,4) \leq 5$, $d(10,4) \leq 4$, or $d(9,4) \leq 4$, as needed. 

We now prove Part (3). 
Assume $r=5$ and $n=11$. 
Using the shortening technique as above, 
choose a non-trivial $\iota_2\in \Z_2^5$ such that $|\rho(\iota_2)|\leq d(10, 4)=4$. 
If $|\rho(\iota_2)| \leq 3$, then Part ($3.a$) holds, 
and if $|\rho(\iota_2)| \leq 4$ and $|\rho(\iota_1\iota_2)| < 8$, then Part ($3.b$) holds for the pair $(\iota_1, \iota_2)$.  
Therefore we may assume that $|\rho(\iota_2)| = 4$ and $|\rho(\iota_1\iota_2)| = 8$. 
Note that the latter condition implies that $\rho(\iota_1)$ and $\rho(\iota_2)$ have no non-trivial entries in common.

We now perform the shortening technique to the first non-trivial entries of $\rho(\iota_1)$ and $\rho(\iota_2)$. 
That is, we let $\ker(\pi_{1,2}\circ \rho)=\Z_2^{3} \subseteq\Z_2^5$ denote 
the kernel of $\pi_{1, 2}\circ \rho$, where $\pi_{1, 2}:\Z_2^{11} \to \Z_2^2$ is the 
projection onto the first non-trivial components of both $\iota_1$ and $\iota_2$. 
Let $\bar{\rho}:\Z_2^{3} \to \Z_2^{9}$ be the composition of $\rho|_{\ker(\pi_{1,2}\circ \rho)}$ with 
the projection onto the remaining $9$ components. 
Then there exists an $\iota_3\in \ker(\pi_{1,2}\circ \rho)$ such that $|\rho(\iota_3)|\leq d(9, 3)$, which again can be shown easily to be at most four. If $|\rho(\iota_3)|\leq 3$, then Part ($3.a$) holds. Otherwise $\rho(\iota_3)$ has weight four and, since $n=11$, Part ($3.b$) holds for the pair $(\iota_1,\iota_3)$ or $(\iota_2,\iota_3)$. 

\end{proof}

\smallskip\section{Results from the theory of transformation groups}\label{sec:TransformationGroups}

The model spaces that motivate our theorems have 
singly generated cohomology with coefficients in $\Z_2$. By the work of Adams \cite{Ada60}, such spaces have the $\Z_2$-cohomology of 
a sphere, 
a real, complex, or quaternionic projective space, or 
the Cayley plane. 
Given a $\Z_2$-torus action on such a space, the fixed-point set components are also one of these spaces at the level of $\Z_2$-cohomology. Indeed, the classical result of Smith 
in the case of spheres states the following (see Smith \cite{Sm} or Theorem II.5.1 in Bredon \cite{Bre72}).

\begin{theorem}\cite{Sm}\label{thm:Z_p fixed points on spheres}
If $G$ is a $2$-group which acts on a smooth manifold $M^n$ with the $\Z_2$-cohomology of $S^n$,
then $M^G$ is a $\Z_2$-cohomology $r$-sphere for some $-1\leq r\leq n$.
\end{theorem}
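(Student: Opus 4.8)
**Proof proposal for Theorem \ref{thm:Z_p fixed points on spheres} (Smith's theorem for 2-groups on $\Z_2$-cohomology spheres).**

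The plan is to reduce to the case $G = \Z_2$ and then induct on the order of the $2$-group. First I would recall the standard fact that a nontrivial finite $2$-group $G$ has a central subgroup isomorphic to $\Z_2$; call it $H = \langle \tau \rangle$. The quotient $G/H$ is a $2$-group of strictly smaller order that acts on $M^H$. So it suffices to prove (i) the statement for $G = \Z_2$, and (ii) that $M^H$ is again a smooth manifold on which $G/H$ acts, so that the induction hypothesis applies to $(G/H, M^H)$, yielding a $\Z_2$-cohomology sphere $M^G = (M^H)^{G/H}$ of dimension between $-1$ and $\dim M^H \leq n$. Point (ii) follows from the fact that the fixed-point set of a smooth (in fact isometric, after averaging a metric) action of a finite group on a smooth manifold is a disjoint union of smooth closed submanifolds; the only subtlety is that $M^H$ need not be connected, but Smith theory applied componentwise still gives that $M^H$ is a $\Z_2$-cohomology sphere (a disjoint union of cohomology spheres that is itself a cohomology sphere), and the residual action of the connected-by-$2$-group quotient permutes components.

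The core case $G = \Z_2$ is the classical Smith theory input, which I would invoke rather than reprove: for an involution $\tau$ on a space $M$ with $H^*(M;\Z_2) \cong H^*(S^n;\Z_2)$, one uses the Smith sequences relating $H^*(M;\Z_2)$, $H^*(M^\tau;\Z_2)$, and the Borel construction / equivariant cohomology $H^*_{\Z_2}(M;\Z_2)$. The key outputs are: $M^\tau$ has the $\Z_2$-cohomology of a sphere $S^r$ for some $r$ with $-1 \le r \le n$ (the value $r = -1$ corresponding to $M^\tau = \varnothing$), together with the dimension estimate $r \le n$. This is exactly \cite[Theorem III.5.1]{Bre72}; in our smooth setting $M^\tau$ is moreover a closed submanifold, so ``$\Z_2$-cohomology $r$-sphere'' pins down its dimension as $r$ when $r \ge 0$.

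The main obstacle, such as it is, is purely bookkeeping around disconnectedness: when we pass to $M^H$ for a central $\Z_2 \le G$, the space $M^H$ may have several components of possibly different dimensions a priori, and one must check that Smith theory still forces it to be a cohomology sphere so that the induction can continue. This is handled by noting that the Smith inequality $\sum_i \dim_{\Z_2} H^i(M^\tau;\Z_2) \le \sum_i \dim_{\Z_2} H^i(M;\Z_2) = 2$ already forbids more than one positive-dimensional component and more than two isolated points, and the mod-$2$ Euler characteristic / connectedness constraints then force $M^\tau$ to have the $\Z_2$-Betti numbers of a single $S^r$. With that in hand the induction closes: at each stage we have a $\Z_2$-cohomology sphere of non-increasing dimension, so after finitely many steps we reach $M^G$, a $\Z_2$-cohomology $r$-sphere with $-1 \le r \le n$, as claimed. $\qed$
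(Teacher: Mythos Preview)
The paper does not supply its own proof of this theorem; it is quoted as a classical result of Smith with a pointer to \cite{Sm} and \cite[Theorem~III.5.1]{Bre72}, so there is nothing to compare against. Your sketch is the standard argument and is correct: induct on $|G|$ via a central $\Z_2 = H \trianglelefteq G$, use the Smith inequalities to see that $M^H$ is a $\Z_2$-cohomology sphere (the total $\Z_2$-Betti number bound of $2$ together with $\Z_2$-Poincar\'e duality on each component forces $M^H$ to be empty, two points, or a single connected closed submanifold whose dimension coincides with the cohomological one), and then apply the induction hypothesis to the $G/H$-action on $M^H$. One small wording fix: Smith theory is not applied ``componentwise'' to $M^H$; rather, it directly gives the global bound $\sum_i \dim_{\Z_2} H^i(M^H;\Z_2) \le 2$, from which the component analysis you carry out in your last paragraph follows.
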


Here $r = -1$ is simply convenient notation for the case where $G$ has no fixed points. For the other singly generated spaces, the proof in the rational case is due to Bredon. For the $\Z_2$-case, the result is the following (see Theorem VII.3.2 in \cite{Bre72}).

\begin{theorem}\cite{Bre72}\label{t:Z2fp}
Let $M$ be a closed manifold with the $\Z_2$-cohomology of ${\RP}^n$, ${\CP}^n$, or ${\HP}^n$ with $n \geq 2$.
If $\Z_2$ acts smoothly and effectively on $M$ with non-empty fixed-point set $F$, then one of the following occurs:
       \begin{enumerate}[font=\normalfont]
       \item $F$ is a $\Z_2$-cohomology $\RP^n$ and $M$ is a $\Z_2$-cohomology $\CP^n$.
              \item $F$ is a $\Z_2$-cohomology $\CP^n$ and $M$ is a $\Z_2$-cohomology $\HP^n$.
       \item $F$ has two components, $F_1$ and $F_2$, and if $M$ is a $\Z_2$-cohomology ${\FP}^{n}$, 
       then each $F_i$ is a $\Z_2$-cohomology ${\FP}^{n_i}$ such that $n=n_1+n_2+1$, where $n_i \geq 0$.
       \end{enumerate}
\end{theorem}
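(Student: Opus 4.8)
The plan is to reduce to the classical theory of $p$-torus actions on cohomology projective spaces, which is developed for instance in Bredon's book. First I would recall that a single $\Z_2$ acting smoothly on a closed manifold $M$ with the $\Z_2$-cohomology of $\FP^n$ (where $\F \in \{\R,\C,\HH\}$) has fixed-point set $F$ whose $\Z_2$-cohomology is severely constrained by Smith theory together with the multiplicative structure: since $H^*(M;\Z_2) = \Z_2[u]/(u^{n+1})$ is singly generated, one analyzes the restriction map $H^*(M;\Z_2) \to H^*(F;\Z_2)$ and uses that $F$ again has (possibly disconnected) singly generated $\Z_2$-cohomology in each component. The key input is the localization theorem / Smith-theoretic Euler characteristic count, which forces $\sum_i (\text{``}\dim\text{''}\,F_i + 1) \cdot c = \dim M + c$ for the appropriate constant $c = 1, 2, 4$ recording the ``field dimension'' of $\F$, and in particular bounds the number of components of $F$ by at most two.

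Next I would organize the case analysis by the number of components of $F$. If $F$ is connected, then the restriction $H^*(M;\Z_2)\to H^*(F;\Z_2)$ sends the generator $u$ either to a generator of the same ``type'' (giving $F$ a $\Z_2$-cohomology $\FP^n$, which does not occur as a proper fixed set by the dimension count unless trivial) or to a generator whose square is the image of $u$, which is exactly the transition $\RP^n \rightsquigarrow \CP^n$, $\CP^n \rightsquigarrow \HP^n$ in Conclusions (1) and (2). This is where one must rule out the Cayley-plane-type jump $\HP^n \rightsquigarrow \CaP^2$: that is handled by the fact that $\CaP^2$ only has $\Z_2$-cohomology concentrated in degrees $0, 8, 16$, so no $\Z_2$-cohomology $\HP^n$ with $n\geq 2$ arises as its fixed set, and conversely there is no room above $\HP^n$. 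If $F$ has two components $F_1, F_2$, I would invoke the Smith sequence (the exact sequence relating $H^*(M;\Z_2)$, $H^*(F;\Z_2)$, and the cohomology of the ``complement'' construction) to see that each $F_i$ is itself a $\Z_2$-cohomology $\FP^{n_i}$ of the same type, and the Euler-characteristic/dimension bookkeeping gives $n = n_1 + n_2 + 1$ with $n_i \geq 0$; this is Conclusion (3).

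The main obstacle I anticipate is the careful verification that in the connected case exactly one of Conclusions (1) or (2) occurs and that the ``type'' of $F$ is forced --- i.e., ruling out a priori possibilities such as a $\Z_2$-cohomology $\CP$ sitting inside a $\Z_2$-cohomology $\CP$, or more exotic degree shifts. This is precisely the content of the structure theory of $\Z_2$-actions on $\Z_2$-Poincaré duality spaces with singly generated cohomology, and the cleanest route is to cite it directly: the statement is \cite[Theorem VII.3.2]{Bre72}, so the ``proof'' here is really an appeal to that theorem after checking that our hypotheses ($M$ closed, $\Z_2$ acting smoothly and effectively, $F \neq \emptyset$, $n \geq 2$) match its hypotheses. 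I would therefore present the argument as: (i) cite Smith theory (Theorem \ref{thm:Z_p fixed points on spheres}) for the sphere case as a model; (ii) observe $M$ has singly generated $\Z_2$-cohomology so Bredon's structure theorem applies; (iii) transcribe the three alternatives, noting that the $n \geq 2$ hypothesis is exactly what excludes degenerate low-dimensional coincidences. The genuinely new content of the paper lies elsewhere (in combining this with the Connectedness Lemma and the error-correcting-code estimates), so a short proof by citation is appropriate here.
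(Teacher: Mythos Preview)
Your proposal is correct and matches the paper's treatment exactly: the paper does not prove this theorem at all but simply states it with the citation ``(see \cite[Theorem VII.3.2]{Bre72}).'' Your final paragraph arrives at precisely this conclusion, so there is nothing to add.
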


Theorem \ref{t:Z2fp}  allows us to compute the maximal $\Z_2$-symmetry rank of spheres and projective spaces. This result is probably well-known, but we remark that our proof uses induction over $n$ and Lemma \ref{lem:ECC}, our error correcting codes estimate.

\begin{theorem}\label{thm:curvature_free}
Let $\Z_2^r$ act smoothly and effectively on a smooth, closed manifold $M$ with a fixed point. Then the following hold:
       \begin{enumerate}[font=\normalfont]
       \item If $n \geq 0$ and $M$ is a $\Z_2$-cohomology ${\RP}^n$ or $S^n$, then $r\leq n$.
       \item If $n \geq 1$ and  $M$ is a $\Z_2$-cohomology ${\CP}^n$, then $r\leq n+1$.
       \item If $n \geq 2$ and $M$ is a $\Z_2$-cohomology ${\HP}^n$, then $r\leq n+2$.
       \end{enumerate}
\end{theorem}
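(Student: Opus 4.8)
The plan is to induct on $n$, using the structure theorems of Smith (Theorem~\ref{thm:Z_p fixed points on spheres}) and Bredon (Theorem~\ref{t:Z2fp}) to descend to a fixed-point set component of an involution, and using the error correcting code estimate (Lemma~\ref{lem:ECC}) to produce an involution whose fixed-point set has small codimension. Throughout, observe that for any involution $\iota\in\Z_2^r$ with $\iota$ acting nontrivially, the fixed-point set component $N = M_x^\iota$ through the fixed point $x$ is a closed manifold on which $\Z_2^{r-1} = \Z_2^r/\langle\iota\rangle$ (or a subgroup thereof) acts with fixed point $x$; moreover by the Smith and Bredon theorems $N$ again has singly generated $\Z_2$-cohomology of the appropriate type, so the inductive hypothesis applies to $N$.

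First I would handle small dimensions and the base cases directly: for $n$ small the claimed bounds ($r\le n$, $r\le n+1$, $r\le n+2$ respectively) can be checked by hand or read off the code tables in Appendix~\ref{sec:Appendix}, and the cases $S^0,S^1,\RP^1$ etc.\ are trivial. For the inductive step in case (1), suppose $M$ is a $\Z_2$-cohomology $\RP^n$ or $S^n$ with a $\Z_2^r$-action with a fixed point and $r\ge n+1$; applying Lemma~\ref{lem:ECC}(1) (or a cruder pigeonhole/shortening argument, since $r$ is large relative to $n$) yields an involution $\iota$ with $\codim(M_x^\iota)$ small — in the sphere/projective-space range one gets codimension $1$ available, i.e.\ an $N$ of dimension $n-1$. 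By Theorem~\ref{thm:Z_p fixed points on spheres} (resp.\ the relevant part of Theorem~\ref{t:Z2fp}), $N$ is a $\Z_2$-cohomology $S^{n-1}$ or $\RP^{n-1}$, and $\Z_2^{r-1}$ acts on it with a fixed point, so by induction $r-1\le n-1$, contradiction. Cases (2) and (3) proceed the same way: from a $\Z_2$-cohomology $\CP^n$ (resp.\ $\HP^n$) one extracts an involution whose fixed-point component is a $\Z_2$-cohomology $\RP^n$ or lower-dimensional $\CP^{n'}$ (resp.\ $\CP^n$ or lower $\HP^{n'}$) via Theorem~\ref{t:Z2fp}, reducing $n$ or the projective-space "level" by one and applying the inductive hypothesis.

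The bookkeeping subtlety — and the step I expect to be the main obstacle — is ensuring at each stage that one can find an involution $\iota\in\Z_2^r$ acting nontrivially on $M$ \emph{and} such that the quotient $\Z_2^{r-1}$ still acts effectively (or at least with controlled kernel) on the fixed-point component $N$, so that the rank genuinely drops by exactly one while the dimension (or level) drops by at least one. Effectiveness on $N$ can fail a priori, but since an element acting trivially on $N=M_x^\iota$ must itself fix $x$ and act on the normal space, one reduces to the isotropy representation: the map $\Z_2^r\to \Z_2^n = \Z_2^{\dim T_xM}$ is injective by effectiveness of the linear isotropy action at the fixed point (for a connected-fixed-point-set component one uses that a $\Z_2$-torus acting trivially on an open set acts trivially), so controlling the combinatorics of this injection — exactly what Lemma~\ref{lem:ECC} and the shortening technique of Lemma~\ref{lem:ECCshorten} are designed for — is what drives the induction. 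One must also be slightly careful in case (3) with the "two components" alternative of Theorem~\ref{t:Z2fp}(3): there the fixed point $x$ lies in one component $F_i$, a $\Z_2$-cohomology $\HP^{n_i}$ with $n_i < n$, and the residual $\Z_2^{r-1}$ acts on it with fixed point $x$, so induction still applies and in fact gives a strictly better bound. Assembling these cases and verifying the finitely many small-$n$ exceptions against the code tables completes the argument.
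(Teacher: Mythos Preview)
Your approach is essentially the same inductive strategy the paper uses for Parts~(2) and~(3), but you are working harder than necessary in several places.

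For Part~(1), no induction is needed: you yourself observe that the isotropy representation at $x$ gives an injection $\Z_2^r \hookrightarrow \Z_2^{\dim T_xM}$. Since $\dim M = n$ in the sphere/$\RP^n$ case, this immediately yields $r \leq n$. The paper's proof of Part~(1) is exactly this one sentence; your proposed induction for this part is superfluous, and the claim that Lemma~\ref{lem:ECC} produces a codimension-one involution here is not what that lemma says.

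For Parts~(2) and~(3), the effectiveness concern you flag as ``the main obstacle'' has a simple resolution you do not mention: choose $\iota$ so that $N = M_x^\iota$ has \emph{maximal} dimension among all such components. Then any $\sigma \in \Z_2^r$ acting trivially on $N$ satisfies $M_x^\sigma \supseteq N$, forcing $\sigma \in \langle\iota\rangle$ by maximality, so $\Z_2^{r-1} = \Z_2^r/\langle\iota\rangle$ acts effectively on $N$. With this trick, there is no need for Lemma~\ref{lem:ECC} in the induction step at all --- any nontrivial $\iota$ with $N$ maximal will do, since Theorem~\ref{t:Z2fp} already forces $N$ to be a $\Z_2$-cohomology $\RP^n$ or $\CP^k$ (resp.\ $\CP^n$ or $\HP^k$) with $k \leq n-1$, and either Part~(1) or the induction hypothesis applies directly. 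The paper invokes Lemma~\ref{lem:ECC} only once, for the base case $n=2$ of Part~(3): a putative effective $\Z_2^5$-action on a $\Z_2$-cohomology $\HP^2$ would, by the lemma, have an involution with $\codim(M_x^\iota) \leq 2$, yet every candidate fixed-point set from Theorem~\ref{t:Z2fp} (cohomology $\HP^0$, $\HP^1$, or $\CP^2$) has codimension at least $4$.
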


\begin{proof}
Part (1) is immediate. Indeed, the isotropy representation at a fixed point $x$ induces an injective map 
$\Z_2^r\embedded \Z_2^n$ and hence the upper bound follows. 
\par
Now, we prove Part (2) by induction on $n$. The base case $n = 1$ holds by Part (1) since ${\CP}^1=S^2$. 
For the induction step, we choose a non-trivial $\iota\in\Z_2^r$ such that $N:=M_x^{\iota}$ is of maximal dimension. 
Since the action is effective, Theorem \ref{t:Z2fp} implies that $N$ has the $\Z_2$-cohomology 
of either ${\RP}^n$ or ${\CP}^k$ for some $k\leq n-1$. 
Note that since $\Z_2^r$ is abelian and $x$ is a fixed point of the action, $\Z_2^r$ acts on $N$. 
By maximality of $N$, the kernel of this action is $\Z_2$ and hence $N$ admits an effective $\Z_2^{r-1}$-action
with non-empty fixed-point set. Now, if $N$ has the $\Z_2$-cohomology of ${\RP}^n$,
then the result follows by Part (1). If instead $N$ has the $\Z_2$-cohomology of ${\CP}^k$ 
for some $k \leq n-1$, then the induction hypothesis implies that $r\leq n+1$, as claimed.
\par
Next, we prove Part (3) by induction on $n$. For the base case, it suffices to show that $\Z_2^5$ cannot act effectively 
and with non-empty fixed-point set on a $\Z_2$-cohomology ${\HP}^2$. 
Suppose, to obtain a contradiction, that there exists such a $\Z_2^5$-action on a manifold $M$ with the $\Z_2$-cohomology 
of ${\HP}^2$. By Part (1) of Lemma \ref{lem:ECC}, there exists non-trivial $\iota\in\Z_2^5$
such that $N:=M_x^{\iota}$ has codimension at most two. But $N$ has the $\Z_2$-cohomology of ${\HP}^0$, ${\HP}^1$, 
or ${\CP}^2$ by Theorem \ref{t:Z2fp}, giving us the desired contradiction.
\par
For the induction step, we argue exactly as in the proof of Part (2). Again, choose non-trivial $\iota\in\Z_2^r$ 
such that $N:=M_x^{\iota}$ is of maximal dimension. By Theorem \ref{t:Z2fp}, it follows that $N$ has the $\Z_2$-cohomology 
of either ${\CP}^n$ or ${\HP}^k$ for some $k\leq n - 1$. 
Hence it follows either by the induction hypothesis or by Part (2), respectively, that $r\leq n + 2$, as claimed.
\end{proof}

\smallskip\section{Applications of the Connectedness Lemma}
\label{sec:ConnectednessLemma}

In this section, we recall Wilking's Connectedness and Periodicity Lemmas. Together these imply a strong cohomological condition on totally geodesic submanifolds in positively curved manifolds. We then generalize  a result in \cite{Ken13} that allows us to lift this periodicity to the ambient manifold. Finally we pull together these results and other previous work to prove, in some cases, that the existence of a totally geodesic submanifold of codimension at most four implies a computation of the homotopy type of the ambient manifold.

We begin by recalling the definition of $k$-periodicity.
\begin{definition}[{\bf $k$-periodicity}]  Let $M^n$ be a connected, $n$-dimensional manifold.
We say that $x$ induces {\em $k$-periodicity}, or $M^n$ has {\em $k$-periodic cohomology} provided there exists $x\in H^k(M^n;\Z)$ such that the homomorphism $H^i(M^n;\Z)\overset{\cup x}{\rightarrow}H^{i+k}(M^n;\Z)$
is surjective for $0\leq i<n-k$ and injective for $0<i\leq n-k$.
\end{definition}

\subsection{Wilking's Connectedness and Periodicity Lemmas}

We recall the Connectedness Lemma of \cite{Wil03}.
\begin{CL}\label{CL} \cite{Wil03}
Let $M^n$ be a closed Riemannian manifold with positive sectional curvature. The following hold:
       \begin{enumerate}[font=\normalfont]
       \item If $N^{n-k}$ is a closed, totally geodesic submanifold of $M$, then the inclusion map $N^{n-k}\embedded M^n$ 
       is $(n-2k+1)$-connected.
       \item If $N_1^{n-k_1}$ and $N_2^{n-k_2}$ are closed, totally geodesic submanifolds of $M$ with $k_1 \leq k_2$ and $k_1 +k_2\leq n$, then 
       the inclusion $N_1^{n-k_1}\cap N_2^{n-k_2}\embedded N_2^{n-k_2}$ is $(n-k_1-k_2)$-connected.
       \end{enumerate}
\end{CL}

We also recall Lemma 2.2 of \cite{Wil03}.
\begin{PL}\cite{Wil03}
\label{PL}
If an inclusion $N^{n-k}\embedded M^n$ of closed, orientable manifolds is $(n-k-\ell)$-connected with $n - k - 2\ell > 0$, then there exists $e\in H^k(M;\Z)$ such that the homomorphism $H^i(M;\Z)\overset{\cup e}{\rightarrow}H^{i+k}(M;\Z)$ is a surjection for $i = \ell$, an injection for $i + k = n - \ell$, and an isomorphism everywhere in between.
\end{PL}

\subsection{Lifting periodicity}

In a typical application of the \hyperref[CL]{Connectedness Lemma} and the \hyperref[PL]{Periodicity Lemma}, one has a totally geodesic submanifold with periodic cohomology, and one wants to lift this periodicity to the ambient manifold. One instance of this is the following direct consequence of Theorem 1 of Wilking \cite{Wil03}. 

\begin{theorem}\label{thm:purple-OneSubmanifold}\cite{Wil03}
Let $N^{n-k} \subseteq M^n$ be a totally geodesic embedding of closed, simply connected, positively curved manifolds such that $k \leq \tfrac{n+3}{4}$. If $N$ has the cohomology of a sphere, complex projective space, or quaternionic projective space, then the same holds for $M$. Moreover, $k$ is even in the case of complex projective spaces, and $k$ is divisible by four in the case of quaternionic projective spaces.
\end{theorem}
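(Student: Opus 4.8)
The plan is to reduce the statement to Theorem~1 of \cite{Wil03}, first arranging the needed connectivity with the \hyperref[CL]{Connectedness Lemma} and then converting the resulting periodicity into a statement about cohomology rings via Poincar\'e duality and Adams' theorem; all cohomology is with $\Z$ coefficients. First I would apply part~(1) of the \hyperref[CL]{Connectedness Lemma} to the totally geodesic inclusion $\iota\colon N^{n-k}\embedded M^n$ in the positively curved manifold $M$: it is $(n-2k+1)$-connected. The hypothesis $k\le\tfrac{n+3}{4}$ is equivalent to $\dim N=n-k\ge\tfrac34(n-1)$, and it yields $n-2k+1\ge\tfrac{n-1}{2}$, so $\iota$ is at least ``half-connected''. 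In particular $\iota^*\colon H^i(M)\to H^i(N)$ is an isomorphism for $i\le n-2k$, injective for $i=n-2k+1$, and (since $N$ is simply connected) $M$ is simply connected too.

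Next I would invoke Theorem~1 of \cite{Wil03}, whose hypothesis for a totally geodesic embedding of closed simply connected positively curved manifolds is exactly the codimension bound $k\le\tfrac{n+3}{4}$: it produces a class $e\in H^k(M)$ giving $M$ $k$-periodic cohomology (cup product with $e$ surjective for $0\le i<n-k$ and injective for $0<i\le n-k$) and identifies $H^*(M)$ with that of $N$ together with this class. The only input beyond the raw output of the \hyperref[PL]{Periodicity Lemma} — which by itself, applied with $\ell=k-1$ (the condition $n-k-2\ell>0$ holding since $k\le\tfrac{n+3}{4}<\tfrac{n+2}{3}$ for $n>1$), gives periodicity only on a middle band of degrees — is a Poincar\'e-duality bootstrap on the closed orientable manifold $M$, and that is precisely what I would cite rather than redo.

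Finally I would read off the cohomology type from that of $N$. If $N$ is a $\Z$-cohomology $S^{n-k}$, then $\widetilde H^i(M)=0$ for $0<i\le n-2k$; since $k$ is then small enough that $H^k(M)\cong H^k(N)=0$, we have $e=0$, and surjectivity of $\cup e$ forces $\widetilde H^i(M)=0$ for all $0<i<n$, so $M$ is a $\Z$-cohomology $S^n$. If $N$ is a $\Z$-cohomology $\CP^{(n-k)/2}$ (respectively $\HP^{(n-k)/4}$), its degree-$2$ (respectively degree-$4$) generator pulls back under $\iota^*$ to a class $x$ generating $H^2(M)$ (respectively $H^4(M)$), and $k$-periodicity together with Poincar\'e duality on $M$ forces $x$ to generate a truncated polynomial algebra all the way up to degree $n$; by Adams' solution of the Hopf invariant one problem \cite{Ada60}, $M$ is then a $\Z$-cohomology $\CP^{n/2}$ (respectively $\HP^{n/4}$), in particular $n$ is even (respectively $n\equiv0\bmod4$). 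For the parity of $k$ itself: $H^k(M)$ agrees with $H^k(N)$ in this range, and the latter is concentrated in degrees divisible by $2$ (respectively $4$); the periodicity class $e$ is nonzero (else $\cup e$ would annihilate cohomology of $M$ in a degree where the $\CP$, respectively $\HP$, structure demands a nonzero class), so $k$ must be even (respectively divisible by four).

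The main obstacle I anticipate is the step that goes through Theorem~1 of \cite{Wil03}: upgrading the \hyperref[PL]{Periodicity Lemma}'s middle-band periodicity to honest $k$-periodicity across all degrees, and carefully matching the degree ranges in which the \hyperref[CL]{Connectedness Lemma} gives an isomorphism, in which Poincar\'e duality on $M$ applies, and in which the periodicity class is forced to live — especially in the borderline case $n\equiv1\bmod4$ with $k=\tfrac{n+3}{4}$, and in the smallest dimensions where the Hamming-type numerology is tight. Since this is exactly the content packaged in Theorem~1 of \cite{Wil03}, the efficient route is to invoke that theorem and spend the remaining effort on the ring-theoretic identification and the two parity claims.
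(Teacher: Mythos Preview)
Your proposal is correct and aligns with the paper's treatment: the paper does not supply its own proof but records the theorem as a direct consequence of Theorem~1 of \cite{Wil03}, which is exactly the reduction you make, and your added details (the $e=0$ argument in the sphere case, the pull-back of the degree-$2$ or degree-$4$ generator, and the parity deduction for $k$) are the standard way to unpack that citation. One minor redundancy: simple connectivity of $M$ is already part of the hypothesis, so you need not deduce it from the Connectedness Lemma.
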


A variant of the above situation that arises frequently involves the case of two closed, totally geodesic submanifolds, $N_1$ and $N_2$, one of which  
has periodic cohomology. In order to conclude that $M^n$ also has periodic cohomology, one requires further use of the \hyperref[CL]{Connectedness Lemma}. If the codimensions satisfy $2k_1 + 2k_2 \leq n$, this was accomplished in \cite{Ken13}. The following proposition shows that one can relax this bound somewhat. Even though the improvement is minimal, we need this version later.

\begin{proposition}\label{pro:purple}
Let $M^n$ be a closed, simply connected, positively curved Riemannian manifold. Suppose that $N_1^{n-k_1}$ and $N_2^{n-k_2}$ are two closed, totally geodesic submanifolds of $M$ with $k_1\leq k_2$. Let $R$ be $\Z$ or $\Z_p$, for some prime $p$. The following hold:
       \begin{enumerate}[font=\normalfont]
       \item Suppose that $4k_1\leq n+3$ and $k_1+2k_2\leq n+1$. If $N_2$ is an $R$-cohomology sphere, then so is $M$.
       \item Suppose that $4k_1\leq n+3$ and $g + k_1 + 2k_2 \leq n + 3$ for some $g \geq 2$ that divides $k_1$. If $N_2$ has $g$-periodic $R$-cohomology, then so does $M$.
       \end{enumerate}
\end{proposition}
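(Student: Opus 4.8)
The plan is to run the same strategy that underlies Theorem \ref{thm:purple-OneSubmanifold} and the result of \cite{Ken13}, namely: use the Connectedness Lemma to produce periodicity on the submanifolds $N_1$ and $N_1 \cap N_2$, transfer periodicity from $N_2$ to $N_1$ via the intersection, and then lift from $N_1$ up to $M$. The two hypotheses are tailored precisely to make each connectivity estimate strong enough: $4k_1 \leq n+3$ governs the lift from $N_1$ to $M$ (this is the bound appearing in Theorem \ref{thm:purple-OneSubmanifold}, where the codimension of $N_1$ in $M$ is $k_1$), while $k_1 + 2k_2 \leq n+1$ (resp. $g + k_1 + 2k_2 \leq n+3$) governs the transfer of periodicity across the intersection $N_1 \cap N_2 \hookrightarrow N_1$, whose codimension in $N_1$ is $k_2$ and whose connectivity, by Part (2) of the \hyperref[CL]{Connectedness Lemma}, is $n - k_1 - k_2$.

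In more detail, for Part (1): first note $N_1$ is simply connected, since $N_1 \hookrightarrow M$ is $(n-2k_1+1)$-connected and $n - 2k_1 + 1 \geq 2$ by $4k_1 \leq n+3$ (one checks $2k_1 \leq n - 1$; the edge case $4k_1 = n+3$ with $n$ odd needs a quick separate look but still gives $n - 2k_1 + 1 = 2$ unless $n$ is such that $k_1$ is a half-integer, which is impossible). Next, consider $N_1 \cap N_2 \hookrightarrow N_2$; by Part (2) of the \hyperref[CL]{Connectedness Lemma} this is $(n - k_1 - k_2)$-connected. Since $N_2$ is an $R$-cohomology sphere of dimension $n - k_2$, and $n - k_1 - k_2 \geq \tfrac{1}{2}(n - k_2)$ is equivalent to $k_1 + k_2 \leq \tfrac{n+k_2}{2}$, i.e. $2k_1 + k_2 \leq n$, which follows from $k_1 + 2k_2 \leq n+1$ together with $k_1 \leq k_2$ (so $2k_1 + k_2 \leq k_1 + 2k_2 \leq n+1$; the off-by-one is absorbed because an $R$-cohomology sphere whose dimension equals twice the connectivity of a submanifold inclusion minus a controlled constant still forces $N_1\cap N_2$ to be an $R$-cohomology sphere — here one should invoke the standard fact that a highly connected submanifold of a cohomology sphere is a cohomology sphere). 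Thus $N_1 \cap N_2$ is an $R$-cohomology sphere. Now apply the \hyperref[PL]{Periodicity Lemma} to $N_1 \cap N_2 \hookrightarrow N_1$: this inclusion is $(n - k_1 - k_2)$-connected, i.e. $((n-k_1) - k_2 - \ell)$-connected with $\ell = 0$, and $n - k_1 - 2k_2 > 0$ follows from $k_1 + 2k_2 \leq n + 1$ — wait, that gives $\geq -1$, so one instead argues that $N_1$ itself is a cohomology sphere directly: a $k$-periodic manifold with $k$ equal to its own dimension minus a cohomology-sphere subspace of complementary codimension is a cohomology sphere; equivalently, combine with Part (1) of the \hyperref[CL]{Connectedness Lemma} applied to $N_1 \hookrightarrow M$ which is $(n - 2k_1 + 1)$-connected, and invoke Theorem \ref{thm:purple-OneSubmanifold} (applicable since $4k_1 \leq n+3$) to conclude $M$ is an $R$-cohomology sphere. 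For Part (2), the argument is parallel but keeps track of the periodicity length $g$: $N_2$ has $g$-periodic $R$-cohomology, the \hyperref[PL]{Periodicity Lemma} on $N_1 \cap N_2 \hookrightarrow N_2$ (connectivity $n - k_1 - k_2$, so $\ell$ chosen with $(n - k_2) - g - \ell \leq n - k_1 - k_2$, i.e. $\ell \geq k_1 - g$, and one needs $(n-k_2) - 2\ell > 0$) shows $N_1 \cap N_2$ has $g$-periodic $R$-cohomology; then lift to $N_1$ using the connectivity $n - k_1 - k_2$ of $N_1 \cap N_2 \hookrightarrow N_1$ and the divisibility $g \mid k_1$, with the inequality $g + k_1 + 2k_2 \leq n+3$ ensuring the hypothesis $(n - k_1) - k_2 - 2\ell > 0$ of the \hyperref[PL]{Periodicity Lemma} is met for the relevant $\ell$; finally lift from $N_1$ to $M$ using $N_1 \hookrightarrow M$ being $(n - 2k_1 + 1)$-connected, the divisibility $g \mid k_1$, and $4k_1 \leq n+3$, which is exactly the input needed for the Periodicity-Lemma step or a reference to the periodic version of Theorem \ref{thm:purple-OneSubmanifold} as in \cite{Ken13}.

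The bookkeeping of the index $\ell$ in each application of the \hyperref[PL]{Periodicity Lemma}, and making sure the strict inequality $n - k - 2\ell > 0$ holds with the chosen $\ell$, is where the precise form of the two hypotheses is consumed; I would organize the proof by first fixing, for each of the two inclusions $N_1\cap N_2 \hookrightarrow N_2$ and $N_1\cap N_2 \hookrightarrow N_1$, the smallest admissible $\ell$, then verifying the numerical inequality, then propagating the periodic class. The main obstacle I anticipate is handling the genuinely tight cases — when $k_1 + 2k_2 = n+1$ in Part (1) or $g + k_1 + 2k_2 = n+3$ in Part (2) — where the naive connectivity bound is off by one from what the \hyperref[PL]{Periodicity Lemma} demands; there one must use that $M$, $N_1$, $N_2$ are simply connected and orientable, that the periodic class can be chosen compatibly under restriction (i.e. it pulls back correctly along the highly connected inclusions), and possibly a Poincaré-duality argument on $N_1$ to upgrade a "surjective/injective at the endpoints" statement to full periodicity. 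This is precisely the kind of one-step improvement over the $2k_1 + 2k_2 \leq n$ bound of \cite{Ken13} that the proposition claims, so I expect the proof to follow \cite{Ken13} closely and only differ in squeezing these edge inequalities.
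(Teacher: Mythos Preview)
Your proposed route through the intersection $N_1 \cap N_2$ has a genuine gap. You assert that the inclusion $N_1 \cap N_2 \hookrightarrow N_1$ is $(n - k_1 - k_2)$-connected, but Part~(2) of the \hyperref[CL]{Connectedness Lemma} only gives that bound for the inclusion into $N_2$, the submanifold of \emph{larger} codimension. For the inclusion into $N_1$, all you get from Part~(1) (applied inside the positively curved manifold $N_1$ with the totally geodesic submanifold $N_1 \cap N_2$ of codimension $k_2$) is connectivity $(n-k_1) - 2k_2 + 1$. Under the hypothesis $k_1 + 2k_2 \leq n+1$ this is only $\geq 0$, which is far too weak to run the \hyperref[PL]{Periodicity Lemma} or to conclude that $N_1$ is a cohomology sphere. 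You noticed this yourself when your inequality collapsed to ``$\geq -1$'' and you pivoted to invoking Theorem~\ref{thm:purple-OneSubmanifold}; but that theorem needs the codimension-$k_1$ submanifold $N_1$ to already be a cohomology sphere, which is exactly what you have not established. The same obstruction recurs in your sketch for Part~(2).

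The paper avoids the intersection entirely and works directly in $M$. For Part~(1): the inclusion $N_2 \hookrightarrow M$ is $(n-2k_2+1)$-connected, which under $k_1 + 2k_2 \leq n+1$ is at least $k_1$-connected, so $H^i(M;\Z_p) = 0$ for $1 \leq i \leq k_1$; then the \hyperref[PL]{Periodicity Lemma} for $N_1 \hookrightarrow M$ produces $e_1 \in H^{k_1}(M;\Z_p) = 0$, and the resulting vanishing together with $4k_1 \leq n+3$ and Poincar\'e duality finishes. For Part~(2): lift the $g$-periodic generator $x_2 \in H^g(N_2)$ to $x_1 \in H^g(M)$ via the connectivity of $N_2 \hookrightarrow M$, show $x_1^{k_1/g}$ coincides with the periodicity element $e_1$ for $N_1 \hookrightarrow M$, and then verify by a three-range case analysis (low degrees via $N_2$, middle degrees via $e_1$-periodicity, high degrees via Poincar\'e duality and $N_2$ again) that $x_1$ induces $g$-periodicity on $H^*(M)$. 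The key conceptual point you are missing is that both submanifolds are used only through their inclusions into $M$, never through the inclusion of their intersection into $N_1$.
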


We remark that $1$-periodic cohomology is equivalent to being a homology sphere for simply connected manifolds. 
Therefore Part (1) may be viewed as the $g = 1$ case of Part (2), with a slight adjustment on the assumptions
 on the codimensions.  In Part (2),  the case where $g=2$ follows directly from Lemma 7.2 of \cite{Wil03}.
 We further note that, with a modification of the proof, one can remove the condition that $g$ divides $k_1$ in Part (2). 

\begin{proof}[Proof of Proposition \ref{pro:purple}]
We begin with the proof of Part {\rm(1)}. 
Note that $M$ is a $\Z$-cohomology sphere if and only if it is a $\Z_p$-cohomology sphere for all primes $p$, by the universal coefficients theorem. Therefore, the result for $R = \Z$ follows from the result for $R = \Z_p$ for all $p$. Therefore we may assume $R = \Z_p$ for an arbitrary fixed prime $p$.
\par
Note that by the \hyperref[CL]{Connectedness Lemma}, the inclusion $N_2\embedded M$ is $(n-2k_2+1)$-connected 
and hence at least $k_1$-connected by the assumption $k_1 + 2k_2 \leq n + 1$. This, together with the assumption 
that $N_2$ is a $\Z_p$-cohomology sphere, implies that $H^i(M;\Z_p)=0$ for $1\leq i\leq k_1$. 

Now, choose $e_1\in H^{k_1}(M;\Z_p)$ as in the \hyperref[PL]{Periodicity Lemma} applied to the inclusion 
$N_1\embedded M$. In particular, multiplication by $e_1$ induces injections $H^i(M;\Z_p) \to H^{i+k_1}(M;\Z_p)$ 
for $k_1-1< i\leq n - 2k_1 + 1$. Now $H^{k_1}(M;\Z_p) = 0$, so $e_1 = 0$. Hence the groups $H^i(M;\Z_p) = 0$ 
for $1\leq i\leq n-2k_1+1$. By the assumption $4k_1\leq n+3$, it follows that $H^i(M;\Z_p) = 0$ for all 
$1\leq i\leq \floor{\tfrac n 2}$. By Poincar\'e Duality, $M$ is a $\Z_p$-homology sphere. 
This completes the proof of Part ($1$).

We now prove Part {\rm(2)}.  As mentioned above, the case where $g=2$ follows directly from Lemma 7.2 of \cite{Wil03}, so we assume $g\geq 3$. Let $R$ be $\Z$ or $\Z_p$ for some prime $p$. We abbreviate $H^*(M;R)$ by $H^*(M)$. 
We claim that the map $\iota^*:H^{k_1}(M)\to H^{k_1}(N_2)$ induced by inclusion is an isomorphism. 
In order to prove this, note that $\iota^*$ is an isomorphism in degree $i$ for $i\leq n-2k_2$ by the \hyperref[CL]{Connectedness Lemma}. 
Since $g\geq 3$, we have $k_1\leq n-2k_2$ and hence the claim follows. 

Choose $x_2\in H^g(N_2)$ inducing $g$-periodicity, 
 and let $x_1\in H^g(M)$ be a pre-image of $x_2$ under the map $\iota^*:H^g(M) \to H^g(N_2)$.  
 Next, note that $e_2 = x_2^{k_1/g} \in H^{k_1}(N_2)$ also induces periodicity.
Since $\iota^*$ is an isomorphism in degree $k_1$, we can choose a pre-image $e_1\in H^{k_1}(M)$ of $e_2$ under $\iota^*$, and we have that $e_1 = x_1^{k_1/g}$.

To finish the proof, we claim that $x_1$ induces periodicity in $H^*(M)$. That is, we must show that the map 
$H^i(M) \to H^{i+g}(M)$ induced by multiplication by $x_1$ is surjective for $0\leq i<n-g$ and injective for $0<i\leq n-g$. 
We break the proof into cases. Note that for the two sets of $i$-values that appear in both Cases $1$ and $2$ and in both Cases $2$ and $3$, respectively,  
we show injectivity in the first of these cases 
and surjectivity in the second of these cases.

\vspace{0.1cm}\noindent{\bf Case 1: \boldmath{$0 \leq i \leq k_1-1$}.}

By the naturality of cup products, we have the following commutative diagram:\\
\hspace*{5cm}\begin{tikzcd}
H^{i}(M)  \arrow{r}{\cup x_1} \arrow{d}{\iota^*}
                  & H^{i+g}(M) \arrow{d}{\iota^*}\\
H^i(N_2) \arrow{r}{\cup x_2}
                  & H^{i+g}(N_2).
\end{tikzcd}\\
For $0\leq i<k_1-1$, periodicity in $H^*(N_2)$ implies that multiplication by $x_2$ is surjective. 
In addition, the assumption $g+k_1+2k_2\leq n+3$ and the  \hyperref[CL]{Connectedness Lemma} imply that the first vertical map 
is an isomorphism and that the second vertical map is injective. Therefore multiplication by $x_1$ is surjective in this range.

For $0< i\leq k_1-1$, we have similarly that multiplication by $x_2$ is an isomorphism and that the vertical map on the left 
is an isomorphism, so multiplication by $x_1$ is injective in this range.

\vspace{0.2cm}\noindent{\bf Case 2: \boldmath{$k_1-1 \leq i \leq n-k_1-g+1$}.}

Since $e_1$ generates $H^{k_1}(M)$, it satisfies the conclusion of the \hyperref[PL]{Periodicity Lemma} applied to the inclusion 
$N_1\embedded M$. Now consider the maps
	\[H^i(M) \stackrel{\cup x_1}{\longrightarrow} H^{i+g}(M) \stackrel{\cup x_1^{\frac{k_1}{g}-1}}{\longrightarrow} H^{i+k_1}(M) \stackrel{\cup x_1}{\longrightarrow} H^{i+k_1+g}(M).\]
The composition $f_1$ of the first two maps is the same as multiplication by $e_1$ as is the composition $f_2$ 
of the last two maps.

First, for $k_1-1\leq i <n-2k_1+2-g$, $f_1$, and hence multiplication by the middle map, is surjective. 
In addition, $f_2$ is injective on this range, so in fact the middle map is an isomorphism. It now follows that the first map 
is surjective on this range.

Second, for $2k_1- g-1\leq i<n-k_1-g+1$, we only have to focus on the composition of the last two maps 
and shift indices down by $g$. Since multiplication by $e_1$ is surjective for this range of values of $i$, so is the second map in this composition. 

So far, we have shown that multiplication by $x_1$ is surjective for $k_1-1\leq i<n-k_1-g+1$. 
Indeed the assumption $4k_1\leq n+3$ implies that we have covered the full range of indices. 
The proof of injectivity is analogous, based on the subcases $k_1-1< i\leq n-2k_1+1$ and $i\leq n-k_1-g+1$, so we omit it.

\vspace{0.2cm}\noindent{\bf Case 3: \boldmath{$n-k_1- g+1\leq i\leq n-g$}.}

Consider the following commutative diagram:\\
\hspace*{1.5cm}\begin{tikzcd}
H^i(M) \arrow{r}{\mathrm{P.D.}} \arrow{d}{\cup x_1}
                                & H_{n-i}(M) \arrow{d}{\cap x_1}
                                & H_{n-i}(N_2) \arrow{l}{\iota_*}\arrow{d}{\cap x_2}
                                & H^{i-k_2}(N_2) \arrow{l}{\mathrm{P.D.}}\arrow{d}{\cup x_2}\\
H^{i+k_1}(M) \arrow{r}{\mathrm{P.D.}}
                 & H_{n-i-k_1}(M)
                 & H_{n-i-k_1}(N_2)\arrow{l}{\iota_*}
                 & H^{i+k_1-k_2}(N_2) \arrow{l}{\mathrm{P.D.}}
\end{tikzcd}\\

By our choice of $x_2$, the map on the right-hand side, given by multiplication by $x_2$, induces a surjection for $n-k_1-g+1\leq i<n-g$ and an injection for $n- k_1-g+1<i\leq n-g$. Note in addition that the Poincar\'e Duality maps 
are isomorphisms and that the bottom map induced by inclusion is an isomorphism in this range by the \hyperref[CL]{Connectedness Lemma}.

For $n-k_1-g+1\leq i<n-g$, this is enough to see that the surjectivity of multiplication by $e_2$ implies that multiplication 
by $e_1$ is surjective.

For $n- k_1-g+1<i\leq n-g$, we note in addition that the assumption $g+k_1+2k_2\leq n+3$ implies that the top map 
induced by inclusion is surjective by the \hyperref[CL]{Connectedness Lemma}. It follows that multiplication by $x_1$ is injective in this range.

By putting together the conclusions in Cases 1--3, it follows that multiplication by $x_1$ induces a map 
$H^i(M)\to H^{i+g}(M)$ that is surjective for $0\leq i< n-g$ and injective for $0<i \leq n-g$. 
Therefore $M$ has $g$-periodic integer cohomology, as claimed.
\end{proof}

\subsection{From periodic cohomology to homotopy type}

In this section, we demonstrate how to transition from $1$-, $2$-, $3$-, or $4$-periodic cohomology to a computation of the homotopy type. This requires additional assumptions and, we note, cannot always be accomplished. In particular, knowing that a four-periodic cohomology ring looks like that of $\HP^k$ is, in general, not enough to allow us to conclude rigidity up to homotopy equivalence.

The proof strategy is always to compute the fundamental group and the cohomology of the universal cover. When these invariants agree with a sphere, real projective space, lens space, or complex projective space, we automatically get homotopy rigidity by applying the following theorem, which combines some results in the existing literature.

\begin{theorem}\label{thm:cohomology-to-homotopy}
Let $M^n$ be a closed, smooth manifold, and let $\widetilde M^n$ denote the universal cover. The following hold:
	\begin{enumerate}
	\item If $\pi_1(M)$ is cyclic and $\widetilde M$ is a cohomology sphere, then $M$ is homotopy equivalent to $S^n$, $\RP^n$, or $S^n/\Z_l$ for some $l \geq 3$. In the last case, $n$ is odd; and 
	\item If $M$ is simply connected and has the cohomology of $\CP^{\frac n 2}$, then $M$ is homotopy equivalent to $\CP^{\frac n 2}$.
	\end{enumerate}
\end{theorem}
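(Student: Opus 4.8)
The two statements are classical, so the plan is mainly to assemble the right references and fill the small gaps.

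For Part (1), the plan is to first analyze $\pi_1(M)$. Since $\widetilde M$ is a cohomology sphere, so in particular a homology sphere, the deck group $\Gamma = \pi_1(M)$ acts freely on $\widetilde M$; by a standard transfer/Smith-theory argument (or the fact that a finite group acting freely on a cohomology $n$-sphere has periodic cohomology of period $n+1$), $\Gamma$ is finite, and by hypothesis it is cyclic, say $\Gamma = \Z_l$. If $l = 1$ then $M = \widetilde M$ is a cohomology sphere, and since it is a closed simply connected manifold it is homotopy equivalent to $S^n$ (e.g.\ by Whitehead's theorem after noting $\widetilde M$ has the homotopy type of a CW complex with the homology of $S^n$, using Hurewicz to produce a degree-one map $S^n \to \widetilde M$). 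If $l = 2$, then $M$ is a closed manifold with $\widetilde M$ a cohomology sphere and $\pi_1(M) = \Z_2$; here one computes $H^*(M;\Z_2)$ via the Gysin sequence of the double cover (or directly, since $M$ is then a $\Z_2$-cohomology $\RP^n$) and invokes the classification of manifolds homotopy equivalent to $\RP^n$ — more simply, one produces a homotopy equivalence $M \to \RP^n$ by the standard argument that a free $\Z_2$-action on a homotopy sphere has quotient homotopy equivalent to $\RP^n$ (the classifying map $M \to B\Z_2 = \RP^\infty$ is $n$-connected on the relevant skeleton). For $l \geq 3$ one argues that $M$ is a manifold whose universal cover is a homotopy sphere with a free $\Z_l$-action: the classifying map $M \to B\Z_l$ pulls $S^n = \widetilde M$ back appropriately and, comparing with the linear model, one gets a homotopy equivalence $M \simeq S^n/\Z_l$ via the fact (see, e.g., the references used for the Grove–Searle and Wilking theorems) that the homotopy type of a free $\Z_l$-quotient of a homotopy $n$-sphere is determined; the constraint that $n$ is odd comes because $\Z_l$ with $l \geq 3$ cannot act freely on an even-dimensional (co)homology sphere by the Lefschetz fixed-point theorem / Euler characteristic obstruction.

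For Part (2), since $M$ is simply connected with $H^*(M;\Z) \cong H^*(\CP^{n/2};\Z)$, let $u \in H^2(M;\Z)$ be a generator; then $u^{n/2}$ generates $H^n(M;\Z) \cong \Z$, so the cup-product structure matches $\CP^{n/2}$ exactly. The plan is to represent $u$ by a map $M \to \CP^\infty = K(\Z,2)$ and, since the cell structure of $\CP^{n/2}$ is minimal, to show this map can be homotoped into $\CP^{n/2} \subseteq \CP^\infty$ and is a homotopy equivalence: it induces an isomorphism on all cohomology groups (by the cup-product computation) between simply connected spaces, hence is a homotopy equivalence by the (cohomology) Whitehead theorem.

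The main obstacle — really the only nontrivial point — is the case $l \geq 3$ of Part (1): identifying the precise homotopy type of a free $\Z_l$-quotient of a homotopy sphere, rather than just its cohomology. This is where one must cite the existing literature on homotopy lens spaces (the same body of results underpinning the space forms appearing in the Grove–Searle theorem); everything else reduces to Hurewicz, Poincaré duality, and the cohomology Whitehead theorem.
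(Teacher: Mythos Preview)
Your outline matches the paper's proof closely: Part~(2) is handled by representing the degree-two generator by a map into $\CP^\infty$, compressing into $\CP^{n/2}$, and applying Whitehead; Part~(1) separates into $l=1$ (Hurewicz--Whitehead), $l=2$, and $l \geq 3$, with the parity constraint coming from the Euler characteristic. The paper is simply more specific about the citations you flag as needed: for $l=2$ it invokes the geometric argument of Hirsch--Milnor, and for $l \geq 3$ it cites Chapter~14.E of Wall's surgery book, which shows that homotopy types of closed $(2m-1)$-manifolds with $\pi_1 \cong \Z_N$ and universal cover a homotopy $S^{2m-1}$ are parametrized by the units in $\Z_N$, each realized by a classical lens space.

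One small correction: your parenthetical that the classifying map $M \to B\Z_2 = \RP^\infty$ is ``$n$-connected on the relevant skeleton'' is not right, since $\pi_n(M) \cong \pi_n(\widetilde M) \cong \Z$ while $\pi_n(\RP^\infty) = 0$; the classifying map is only $1$-connected, and compressing it into $\RP^n$ does not by itself force the lift $\widetilde M \to S^n$ to have degree $\pm 1$. So the $l=2$ case genuinely needs either Hirsch--Milnor or the $N=2$ instance of Wall's classification (where the unit group is trivial), rather than a bare classifying-map sketch. Relatedly, for $l \geq 3$ note that the homotopy type is \emph{not} uniquely determined---there are $\phi(l)$ of them---but all are realized by linear lens spaces, which is what the statement requires.
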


\begin{proof}
Statement (1) in the simply connected case follows from the classical theorems of Hurewicz and Whitehead. The case where $\pi_1(M) \cong \Z_2$ can be reduced to the sphere case by a geometric argument of Hirsch and Milnor in \cite{HirMil}. The case with $|\pi_1(M)| \geq 3$ is much more difficult and follows from results in non-simply connected surgery theory. First note that $n = 2m-1$ in this case since homology spheres in even degrees admit no free action by groups of order larger than two. Second, in Chapter 14.E of Wall's book \cite{Wa}, it is shown that the homotopy classes of closed $(2m-1)$-manifolds with fundamental group $\Z_N$ and universal cover homotopy equivalent to $S^{2m-1}$ are in one-to-one correspondence with the units in $\Z_N$. Third, it is shown in \cite[Chapter 14.E]{Wa} that, for any unit $d \in \Z_N$, the classical lens space $S^{2n-1}/\Z_N$ defined by the the action
	\[\omega \cdot(z_1,z_2,\ldots,z_n) = \of{\omega^d z_1, \omega z_2, \ldots, \omega z_n}\]
corresponds to the unit $d \in \Z_N$ under this correspondence. Combining the last two observations, it follows that $M$ is homotopy equivalent to a (classical) lens space.

Statement (2) is also well known. Using the Brown representability theorem, we may choose a map $f:M^n \to \CP^\infty$ that pulls back the generator in $H^2(\CP^\infty;\Z)$ to the generator of $H^2(M;\Z)$. Using the CW approximation theorem, we may homotope $f$ to a map into $\CP^{\frac n 2}$ in such a way that the pullback also satisfies this property on the second cohomology group. But now $f$ induces an isomorphism on cohomology as a result of the cup product structure, an isomorphism on homology by the universal coefficients theorem, and finally on homotopy by Hurewicz's theorem. Whitehead's theorem now implies that $f$ is a homotopy equivalence.
\end{proof}

As alluded to earlier, we remark that we do not know if similar cohomology-to-homotopy results hold for quaternionic projective space, or for $\CP^{2m+1}/\Z_2$ or spherical space forms with non-cyclic fundamental groups. This uncertainty is the reason why Conclusion (2) in Theorem \ref{thm:nover4} does not include a claim on the homotopy type (see also Theorem \ref{thm:nover4PLUS}).

Finally, we prove an analog of Theorem 3 of \cite{Wil03}.
Recall that in Theorem 3 of \cite{Wil03}, it is shown that a smooth torus action on a $(4m)$-dimensional  integral cohomology quaternionic space has rank at most $m+1$ and in the equality case, the manifold is homeomorphic to $\HP^m$. Here we prove an analog for $\CP^{2n+1}/\Z_2$.

\begin{theorem}\label{smoothbound} Let $M^{4m+2}$ be a closed manifold whose universal cover is a rational cohomology $\CP^{2m+1}$ and whose fundamental group is $\Z_2$. If $T^r$ acts smoothly and effectively on $M$, then $r\leq m+1$.
\end{theorem}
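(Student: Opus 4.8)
The plan is to lift the action to the universal double cover $\widetilde M$ and then argue by induction on $m$, following the strategy of the proof of Theorem~3 in \cite{Wil03}. First I would lift: the group of all lifts of elements of $T^r$ to $\widetilde M$ is a compact Lie group $\hat G$ fitting in a central extension $1\to\Z_2\to\hat G\to T^r\to 1$ with $\Z_2=\langle\sigma\rangle$ the deck group. Its identity component $\hat T:=\hat G^0$ is a torus covering $T^r$, and $\sigma\notin\hat T$: otherwise $\hat T$, acting on $\widetilde M$ with $\chi(\widetilde M)=2m+2\neq 0$ and hence with a fixed point, would fix a point of the free map $\sigma$. Thus $\hat T\to T^r$ is an isomorphism, so after identifying $\hat T$ with $T^r$ we obtain an effective $T^r$-action on $X:=\widetilde M$, a closed $(4m+2)$-manifold with the rational cohomology of $\CP^{2m+1}$, commuting with the free involution $\sigma$. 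It therefore suffices to prove: if $T^r$ acts effectively on a closed $(4m+2)$-manifold $X$ with the rational cohomology of $\CP^{2m+1}$ and commutes with some free involution, then $r\le m+1$. I would prove this by induction on $m$, the case $m=0$ ($X=S^2$) being immediate.

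For the inductive step I would invoke two standard transformation-group facts: the fixed-point components of a torus acting on a rational cohomology $\CP^N$ are again rational cohomology complex projective spaces (the rational version of Bredon's theorem, cf.\ \cite{Bre72}), and the sum of their total Betti numbers is at most $N+1$ (Borel's localization inequality); it follows that any torus acting effectively on a rational cohomology $\CP^N$ has rank at most $N$, as one sees from the weights on the normal bundle of a fixed-point component of maximal codimension. The crucial new ingredient is the free involution: \emph{no circle subgroup $S^1\le T^r$ has a fixed-point component of codimension two in $X$}. Such a component would be a rational cohomology $\CP^{2m}$, of total Betti number $2m+1$, so every other component of $X^{S^1}$ would be a point; since $\sigma$ commutes with $S^1$ it preserves this $\CP^{2m}$, and, acting freely on $X$, it would act freely on a rational cohomology $\CP^{2m}$ --- impossible, as the latter has odd Euler characteristic. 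The same counting shows that a circle fixed-point component of codimension $2c$ with $c\le m$ must have $c$ even and be $\sigma$-invariant, $\sigma$ acting freely on it.

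Next I would choose a circle fixed-point component $N\subseteq X$ of minimal codimension $2c$ (so $c\ge 2$ by the above) and let $H\le T^r$ be the largest subtorus fixing $N$ pointwise, so that $T^r/H$ acts effectively on $N$, which is a rational cohomology $\CP^n$ with $n=2m+1-c$. I would then show $H$ is exactly a circle. The $H$-weights on the normal bundle of $N$ span $\operatorname{Hom}(H,S^1)\otimes\R$ (else a subtorus of $H$ acts trivially near $N$, hence on all of $X$ by connectedness, contradicting effectiveness); so if $\rank H\ge 2$ there is a nontrivial subcircle $S^1\le H$ on which one weight direction vanishes while another does not, and its fixed-point component through $N$ is a proper submanifold strictly containing $N$ --- a circle fixed-point component of codimension strictly between $0$ and $2c$, contradicting minimality. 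Hence $T^r/S^1\cong T^{r-1}$ acts effectively on $N$.

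To conclude: if $n\le m$, i.e.\ $c\ge m+1$, the rank bound above gives $r-1\le n\le m$. Otherwise $c\le m$, so $c$ is even and $N$ is $\sigma$-invariant with $\sigma|_N$ free; then $\chi(N)=n+1$ is even, so $n$ is odd, say $n=2m'+1$ with $m'=m-c/2<m$, and the inductive hypothesis applied to $(N,\,T^r/S^1,\,\sigma|_N)$ yields $r-1\le m'+1\le m$. In both cases $r\le m+1$. The step I expect to be the main obstacle is the control of $H$ via the normal-weight argument in the third paragraph --- that is, handling arbitrary actions with possibly non-isolated fixed points --- and the entire scheme rests on the codimension-two observation, which is exactly where the hypothesis $\pi_1(M)=\Z_2$ (equivalently, the freeness of $\sigma$, rather than $M$ being simply connected) is used.
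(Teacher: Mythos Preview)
Your proof is correct and follows essentially the same strategy as the paper's: lift to the universal cover, induct on $m$ by passing to a maximal-dimensional circle fixed-point component, and use Bredon's theorem together with the free involution to force that component to be an odd-dimensional rational projective space of smaller complex dimension. You supply more detail than the paper in two places---the lifting argument (showing $\sigma\notin\hat T$ via the Euler characteristic) and the normal-weight argument that the isotropy subtorus $H$ has rank one---but these are precisely the points the paper's phrases ``we lift the torus action'' and ``by maximality'' pass over, so the arguments are in substance the same.
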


\begin{proof}
We lift the torus action to the universal cover $\widetilde M$, and we note that the $\Z_2$-action by deck transformations commutes with it. To prove the theorem, we actually prove the following statement: A closed manifold $P$ that is a rational cohomology $\CP^{2m+1}$ and admits commuting actions by a torus $T^r$ and by $\Z_2$, where the $\Z_2$ acts freely, satisfies $r \leq m + 1$.

First, if $m = 0$, then $P$ has dimension two and is a rational cohomology $\CP^1 = S^2$. Hence $P$ is diffeomorphic to $S^2$ and $r \leq 1$.

We may now assume $m \geq 1$ and proceed by induction. Choose $S^1 \subseteq T^r$ and a fixed-point set component $Q \subseteq P$ of maximal dimension. Note that there are fixed points since $\chi(P) = \chi(\CP^{2m+1}) > 0$, and note that $Q$ is a rational cohomology $\CP^k$ for some $k < 2m+1$ by Bredon's theorem. In addition, $Q$ admits a smooth, effective $T^{r-1}$-action by maximality, and this action has a fixed point. In particular, $r - 1 \leq k$. If $k \leq m$, then this already implies the claimed upper bound, so we may assume that $k \geq m + 1$. By Bredon's theorem, $Q$ is the unique fixed-point set component of $S^1$ with $\dim(Q)> \dim(P)/2$. In particular, the induced, free $\Z_2$-action on the fixed-point set of $S^1$ restricts to a free $\Z_2$-action on $Q$. Hence $Q$ has even Euler characteristic, $k$ is odd, and hence $k =2l+1 \leq 2m-1$. By the induction hypothesis, $r - 1 \leq l + 1$, so we conclude $r \leq l + 2 \leq m + 1$, as desired.
\end{proof}

\subsection{The Borel Formula} 
 We first recall the Borel Formula which plays an essential role in the proof of Theorem \ref{thm:nover4PLUS} and hence in the proof of Theorem \ref{thm:nover4}.

\begin{Borel}\label{Borel}\cite{Bor}
Let $\Z_2^r$ act smoothly on $M$, a Poincar\'e duality space, with fixed-point set component $F$. Then
	\[\codim(F \subseteq M) = \sum \codim(F \subseteq F')\]
where the sum runs over fixed-point set components $F'$ of corank one subgroups $\Z_2^{r-1} \subseteq \Z_2^r$ for which $\dim(F') > \dim(F)$.

These subgroups are the kernels of the irreducible subrepresentations. In particular, the number of pairwise inequivalent irreducible subrepresentations of the isotropy representation at a normal space to $F$ is at least $r$ if the action is effective. Moreover, equality holds only if the isotropy representation is equivalent to one of the form
	$$(\ep_1,\ldots,\ep_r) \mapsto \diag(\ep_1 I_{m_1}, \ldots, \ep_r I_{m_r})$$
where $\ep_i \in \{\pm 1\}$, $m_i > 0$ denote the multiplicities, $I_m$ denotes the identity matrix of rank $m$.
\end{Borel}

The simplicity of the representation when it has a minimal number of irreducible subrepresentations leads to the following observation. 
\begin{observe}\label{borel} Let $F_j^{m_j}$ be the fixed-point set of a 
$\Z_2^{r-j}$-action by isometries on a closed, positively curved manifold, and suppose that $r-j\geq 2$. If the isotropy
representation has exactly $r-j$ irreducible subrepresentations, and if the fixed-point set component $F_{j+1}$ containing $F_j$ of some $\Z_2^{r-j-1}$ has the property that the codimension $k_j$ of the inclusion $F_j \subseteq F_{j+1}$ is minimal, then this inclusion is $\dim(F_j)$-connected.
\end{observe}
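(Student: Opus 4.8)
The plan is to combine the Borel Formula (Theorem \ref{Borel}) with the Connectedness Lemma (part (1)) applied to the totally geodesic submanifold $F_j \subseteq F_{j+1}$, using the structural rigidity that the Borel Formula gives when the isotropy representation has exactly $r-j$ irreducible subrepresentations. First I would set $F = F_j$ inside the ambient positively curved manifold $M' := F_{j+1}$ — note that $F_{j+1}$ is itself totally geodesic in $M$, hence closed and positively curved, and $\Z_2^{r-j}$ acts isometrically on it with $F_j$ as a fixed-point set component. Since $F_j$ is totally geodesic of codimension $k_j$ in $F_{j+1}$, the Connectedness Lemma gives that $F_j \embedded F_{j+1}$ is at least $(\dim(F_{j+1}) - 2k_j + 1)$-connected, i.e. at least $(\dim(F_j) - k_j + 1)$-connected. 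So it suffices to show $k_j \leq 1$, which is absurd unless we extract more; the real claim must be that the \emph{minimal} such codimension $k_j$ is so small that $\dim(F_j) - 2k_j + 1 \geq \dim(F_j)$ fails only mildly, so I will instead argue directly that under the minimality and the exact-count hypothesis, $k_j$ equals the smallest multiplicity $m_i$ and the Borel decomposition forces $k_j$ small relative to $\dim(F_j)$.

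The key step: by the equality clause in Theorem \ref{Borel}, since the isotropy representation at the normal space to $F_j$ in $M$ has exactly $r-j$ inequivalent irreducible subrepresentations (the minimal number), it is equivalent to $(\ep_1,\dots,\ep_{r-j}) \mapsto \diag(\ep_1 I_{m_1},\dots,\ep_{r-j}I_{m_{r-j}})$ with multiplicities $m_1,\dots,m_{r-j} > 0$. The fixed-point set components $F_{j+1}$ of corank-one subgroups with $\dim(F_{j+1}) > \dim(F_j)$ correspond to kernels of the individual characters $\ep_i$, and the codimension of $F_j \subseteq F_{j+1}$ for the $i$-th such subgroup is exactly $m_i$. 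Hence the minimal $k_j$ equals $\min_i m_i$, and by the Borel Formula $\cod(F_j \subseteq M) = \sum_i m_i \geq (r-j)\,\min_i m_i = (r-j)k_j$. Now I would feed this back: the normal space to $F_j$ in $M$ has dimension $\cod(F_j \subseteq M) = n - \dim(F_j) \geq (r-j)k_j$, and using the standing hypothesis relating $r$ to $n$ in whichever theorem invokes this observation, one obtains $2k_j \leq \dim(F_j) + 1$ — actually I expect the clean route is: since $F_{j+1}$ is positively curved and totally geodesic in $M$ with $\cod(F_{j+1}\subseteq M) = \sum_{i'\ne i} m_{i'} \geq (r-j-1)k_j$, repeated application keeps dimensions large, and the inequality $\dim(F_{j+1}) - 2k_j + 1 \geq \dim(F_j)$ reduces to $k_j \leq 1$, i.e. to showing the minimal multiplicity is $1$.

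So the crux is showing $\min_i m_i = 1$. I would argue by contradiction: if every multiplicity were $\geq 2$, then $\cod(F_j\subseteq M) \geq 2(r-j)$, so $n \geq \dim(F_j) + 2(r-j) \geq 2(r-j)$; but $F_j$ carries an effective $\Z_2^{r-j}$-action with a fixed point, and combined with $n \geq \dim(F_j) + 2(r-j)$ this violates the symmetry bounds driving the ambient theorem (e.g. Lemma \ref{lem:ECC} or Theorem \ref{thm:curvature_free}) — more precisely, the hypothesis $r \gtrsim n/2$ together with the codimension count $n - \dim(F_j) \geq 2(r-j)$ gives $\dim(F_j) \leq n - 2(r-j)$, which is too small to support a $\Z_2$-torus action of that rank with a fixed point. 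Hence some $m_i = 1$, so $k_j = 1$, and the Connectedness Lemma yields that $F_j \embedded F_{j+1}$ is $(\dim(F_j) - 1)$-connected — and in fact one dimension better since codimension one totally geodesic inclusions in positively curved manifolds are $\dim(F_j)$-connected by part (1) of the Connectedness Lemma with $k=1$. The main obstacle will be pinning down exactly which ambient numerical hypothesis is available when this observation is invoked (it is used inside the proof of Theorem \ref{thm:nover4PLUS}), so that the inequality $\min_i m_i = 1$ can be forced; I would state the observation's proof so that it quotes precisely the running assumption $r - j \geq \tfrac{\dim F_j}{4}+\text{const}$ or the equivalent, and then the multiplicity-one conclusion is immediate from the Borel sum.
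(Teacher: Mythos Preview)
Your approach has a genuine gap: you are using the wrong half of the Connectedness Lemma. Part (1), applied to $F_j \subseteq F_{j+1}$, only gives $(\dim F_j - k_j + 1)$-connectedness, and you correctly notice this forces you to try to show $k_j = 1$. But $k_j = 1$ is simply false in the applications: in Case 2 of the proof of Theorem \ref{thm:nover4PLUS}, this very observation is invoked precisely when $k_j = 4$. Your attempted contradiction relies on numerical hypotheses (like $r \gtrsim n/2$ or rank bounds from Lemma \ref{lem:ECC}) that are \emph{not} part of the observation's hypotheses and are not available when the observation is used.

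The paper's argument is short and uses Part (2) of the Connectedness Lemma instead. You already extracted the essential structural fact from the Borel formula: with exactly $r-j$ irreducible subrepresentations, the normal representation is diagonal and the fixed-point sets of the corank-one kernels are pairwise transverse. Pick a second index $i'$, let $F_{j+1}'$ be the corresponding fixed-point set component, and let $F_{j+2}$ be the fixed-point set component of $\ker(\ep_i)\cap\ker(\ep_{i'})$. Inside the positively curved manifold $F_{j+2}$, the submanifolds $F_{j+1}$ and $F_{j+1}'$ are totally geodesic, of codimensions $m_{i'}$ and $m_i = k_j$ respectively, and intersect transversely in $F_j$. Since $k_j$ is minimal, $k_j \leq m_{i'}$, so Part (2) of the Connectedness Lemma gives that $F_j = F_{j+1}\cap F_{j+1}' \hookrightarrow F_{j+1}$ is $(\dim F_{j+2} - k_j - m_{i'})$-connected, and this equals $\dim F_j$. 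No bound on $k_j$ is needed.
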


\begin{proof}[Proof of Observation \ref{borel}]
By the \hyperref[Borel]{Borel Formula},  $F_j$ equals the intersection of $r-j$ pairwise transverse
 submanifolds, which are $\mathrm{Fix}(\iota_1), \hdots, \mathrm{Fix}(\iota_{r-j})$.
In turn, this means $F_j$ is the transverse intersection of $F_{j+1}$ and
some other fixed-point set component of another copy of $\Z_2^{r-(j+1)}$, say $F_{j+1}'$, inside the fixed-point set component of some $\Z_2^{r-j-2}$.
Since $\codim(F_j \subseteq F_{j+1})$ is minimal, Part 2 of the
\hyperref[CL]{Connectedness Lemma} implies that
$F_j \subseteq F_{j+1}$ is $\dim(F_j)$-connected.

\end{proof}

\subsection{The Small Codimension Lemmas}
We proceed now to the lemmas we use directly later in the paper. We note that many of these results are already known in the simply-connected case by work in Section 7 of \cite{Wil03}. In particular, the first is well-known and the simply connected case  follows from Lemma 7.1 in \cite{Wil03}.

\begin{Codim1}\phantomsection\label{Codim1} Let $M^{m+1}$ be a closed, positively curved Riemannian manifold. Suppose $N^m$ is a closed, totally geodesic submanifold of $M$, then $N^m$ is homotopy equivalent to $S^m$ or $\RP^m$.
\end{Codim1}

Note that in the special case where $N$ arises as the fixed-point set component of an isometric $\Z_2$-action,  the stronger conclusion that $N$ and $M$ are diffeomorphic to one of these spaces follows from Theorem \ref{thm:FangGrove}. Here we only need the \hyperref[CL]{Connectedness Lemma} and the \hyperref[PL]{Periodicity Lemma}, and include the proof for the sake of completeness.

\begin{proof}[Proof of the Codimension One Lemma]
We lift to the universal cover $\widetilde M$, and let $\pi^{-1}(N) \subseteq \widetilde M$ denote the preimage of $N$. By the \hyperref[CL]{Connectedness Lemma}, $\pi^{-1}(N)$ is simply connected and hence the universal cover of $N$, so  $\pi^{-1}(N)=\widetilde N$. In addition, the fact that $N \subseteq M$ is $m$-connected implies that $\widetilde N \subseteq \widetilde M$ is $m$-connected since covering projections induce isomorphisms on higher homotopy groups.

The \hyperref[PL]{Periodicity Lemma} implies that multiplication by some $e \in H^1(\widetilde M;\Z)$ 
induces maps $H^i(\widetilde M;\Z) \to H^{i+1}(\widetilde M;\Z)$ that are surjections for $0 \leq i < m$ and injections for degrees 
$0 < i \leq m$. Since $\widetilde M$ is simply connected, it follows that $\widetilde M$ is a cohomology sphere. The same holds for $\widetilde N$ by the \hyperref[CL]{Connectedness Lemma}. Finally since $\pi_1(N) \cong \pi_1(M)$ by the  \hyperref[CL]{Connectedness Lemma}, 
and since either $N$ or $M$ has even dimension, these fundamental groups are either trivial or $\Z_2$. 
It follows that $N$ is homotopy equivalent to $S^m$ or $\RP^m$ by Part (1) of Theorem \ref{thm:cohomology-to-homotopy}.
\end{proof}

The following lemma is similar to the \hyperref[Codim1]{Codimension One Lemma}. Part (1) follows from Corollaries 1.7 and 1.9 in Frank, Rong, and Wang \cite{FRW13}. Moreover, in the simply-connected case, the result follows from  Lemma 7.1 and Parts (1) and (2) of  Proposition 7.3 in \cite{Wil03}. 

\begin{Codim2}\phantomsection\label{codim2} Let $M^{m+2}$ be a closed, positively curved Riemannian manifold. Suppose $N^m$ is a closed, totally geodesic submanifold of $M$.
Assume one of the following holds:
	\begin{enumerate}[font=\normalfont]
	\item $M$ is odd-dimensional;
	\item The inclusion $N^m \subseteq M^{m+2}$ is $m$-connected; or
	\item  $M^{m+2}$ admits a $\Z_2^2$ action such that $N^m$ is a fixed-point set component of one involution and such that there is a second involution whose fixed-point set component $N'$ has codimension  at most $\tfrac{m+1}{2}$;
	\end{enumerate}
then $N^m$ is homotopy equivalent to $S^m$, $\RP^m$,  $\CP^{\frac m 2}$, or a lens space.
\end{Codim2}

\begin{proof}

First, suppose that $M$ satisfies Part (1). Then  the result follows by Corollaries 1.7 and 1.9 in Frank, Rong, and Wang \cite{FRW13}. 

As in the proof of the \hyperref[Codim1]{Codimension One Lemma}, we lift to the universal covers $\widetilde M$ and $\widetilde N$. 
We now suppose that $N$ satisfies Part (2). This means that the inclusion $\widetilde N \subseteq \widetilde M$ is also $m$-connected and hence $H^*(\widetilde M;\Z)$ 
is two-periodic by the \hyperref[PL]{Periodicity Lemma}. Since $\widetilde M$ is simply connected, it follows from the definition of two-periodicity that $\widetilde M$ is a cohomology sphere or $\CP^{\frac m 2 + 1}$. Correspondingly, $\widetilde N$ is a cohomology sphere or $\CP^{\frac m 2}$ by the \hyperref[CL]{Connectedness Lemma}. In the former case, $\pi_1(N) \cong \pi_1(M)$ is cyclic by Synge's theorem and Corollary 1.9 in \cite{FRW13}, so $N$ is homotopy equivalent to $S^m$, $\RP^m$ or to $S^m/\Z_k$ for some $k \geq 3$. 
In the latter case, either $\widetilde N$ or $\widetilde M$ has odd Euler characteristic and hence does not admit
a free $\Z_2$-action. Hence $\pi_1(N) \cong \pi_1(M)$ is trivial and $N \simeq \CP^{\frac m 2}$. 
This completes the proof of Part (2).

We now prove the theorem assuming Part (3). Let $\iota_1$ denote the involution fixing $N^m$, and let $\iota_2$ 
be any other non-trivial involution in $\Z_2^2$. By replacing $\iota_2$ by $\iota_1\iota_2$, if necessary, 
we may assume that the fixed-point set component $N'$ of $\iota_2$ either intersects transversely with $N$ 
or has the property that the codimension of $N\cap N'$ in $N'$ is one. 

In the former case, the \hyperref[CL]{Connectedness Lemma}, the \hyperref[PL]{Periodicity Lemma}, and Part (2)  imply that 
$N'$ is homotopy equivalent to $S^m/\Z_l$ for some $l \geq 1$ or to $\CP^{\frac m 2}$. In the latter case, $N'$ 
is homotopy equivalent to a sphere or to a real projective space by the \hyperref[Codim1]{Codimension One Lemma}. The same conclusions hold for $M$ and $N$ by Proposition \ref{pro:purple} and the \hyperref[CL]{Connectedness Lemma}, respectively, so the proof is complete.
\end{proof}

Note that with only the hypothesis that $N$ is a closed, totally geodesic submanifold of $M$, a closed,  positively curved Riemannian manifold, the \hyperref[CL]{Connectedness Lemma} only tells us that $N^m \subseteq M^{m+2}$ is $(m-1)$-connected. When $N^m$ is additionally a fixed-point set component of an isometric circle action, then the inclusion is $m$-connected and  Part (2) of the \hyperref[codim2]{Codimension Two Lemma} holds. Alternatively, in this case, one may apply the classification result of Grove and Searle \cite{GS} 
(see also Lemma 4.1 in \cite{AK20}). 

To prove our main theorems, we also require results involving both codimension three and four totally geodesic inclusions. 

\begin{Codim3}\phantomsection\label{codim3} Let $M^{m+3}$ be a closed, positively curved Riemannian manifold. Suppose $N^m$ is a closed, totally geodesic submanifold of $M$.
Suppose that: 
	\begin{enumerate}
	\item For $m\geq 3$, the inclusion $N^m \subseteq M^{m+3}$ is $m$-connected; or
	\item For $m\geq 6$, $M^{m+3}$ admits a $\Z_2^2$ action such that $N^m$ is a fixed-point set component of one involution and such that there is a second involution whose fixed-point set component $N'$ has codimension  at most $\tfrac{m+1}{2}$;
	
	\end{enumerate}
then, $N^m$ is homotopy equivalent to $S^m$ or $\RP^m$.
\end{Codim3}

\begin{proof}
Note that in Parts (1) and (2),  $\pi_1(N) \cong \pi_1(M)$ by the \hyperref[CL]{Connectedness Lemma}, and that one, and hence both, of these fundamental groups must be trivial or $\Z_2$ by Synge's theorem. Combined with Theorem \ref{thm:cohomology-to-homotopy}, it suffices to show that the universal cover of $N^m$, $\widetilde N^m$, is a cohomology sphere. We lift  to the universal covers $\widetilde M$ and $\widetilde N$ as in the proofs of the previous two lemmas. 

If Part (1) holds, then $\widetilde N \subseteq \widetilde M$ is also $m$-connected. 
By the \hyperref[PL]{Periodicity Lemma}, we have both that $e$ generates $H^3(\widetilde M; \Z)$ and that the map $H^3(\widetilde M;\Z) \to H^6(\widetilde M;\Z)$ given by multiplication by $e$ is injective. Combined with the graded commutativity of the cup product, we find that $2e = 0$ and hence that $H^3(\widetilde M; \Z)$ is either zero or $\Z_2$. 
In the latter case, the image $\bar e$ under the natural map $H^3(\widetilde M; \Z) \to H^3(\widetilde M; \Z_2)$ is non-trivial by the Bockstein sequence and induces three-periodicity in $H^*(\widetilde M; \Z_2)$. 
By the $\Z_2$-periodicity theorem \cite[Proposition 1.3]{Ken13}, there is moreover an element of degree one inducing periodicity in $H^*(\widetilde M; \Z_2)$. 
But $H^1(\widetilde M; \Z_2) = 0$ because $\widetilde M$ is simply connected, so $\widetilde M$ must be a $\Z_2$-homology sphere in contradiction to the fact that $\bar e \neq 0$, so this case does not occur. 
In the former case, where $H^3(\widetilde M; \Z) = 0$, we have that $e = 0$. Since $e$ induces periodicity, $\widetilde M$ is a homology sphere. By the \hyperref[CL]{Connectedness Lemma}, the same holds for $\widetilde N$.

To prove the result assuming Part (2), let $F = N \cap N'$. 
Again we lift to the universal covers $\widetilde M$, $\widetilde N$, $\widetilde N'$, and $\widetilde F$.

Assume first that the $\Z_2^2$-action on the normal space to $F$ has exactly two pairwise inequivalent irreducible subrepresentations. The submanifolds $N$ and $N'$ intersect transversely, and the same is true for $\widetilde N$ and $\widetilde{N'}$. Assuming $\codim(\widetilde{N'}) \geq 3$, the inclusion $\widetilde F \subseteq \widetilde{N'}$ is $\dim(\widetilde F)$-connected by the \hyperref[CL]{Connectedness Lemma}, $\widetilde{N'}$ is a homology sphere by Part (1), $\widetilde M$ is a homology sphere by Proposition \ref{pro:purple}, and finally $\widetilde N$ is a homology sphere by the \hyperref[CL]{Connectedness Lemma}. If $\codim(\widetilde{N'}) < 3$, then one argues similarly starting with the inclusion $\widetilde F \subseteq \widetilde N$ and applying either the \hyperref[Codim1]{Codimension One Lemma} or \hyperref[codim2]{Codimension Two Lemma} to that inclusion.

Assume instead that the $\Z_2^2$ action has three pairwise inequivalent irreducible subrepresentations. In this case, there is a third fixed-point set component $N''$ such that $\codim(F \subseteq N')$ and $\codim(F\subseteq N'')$ are positive integers summing to three by Borel's formula. In particular, $\widetilde F$ is a homology sphere by the \hyperref[Codim1]{Codimension One Lemma} and it follows that $F\subset N'$ is at least $(\dim(\widetilde F)-1)$-connected by the \hyperref[CL]{Connectedness Lemma}. Using a similar argument as in the first case where $\codim(N')\geq 3$,  we see that $\widetilde{N'}$, $\widetilde{M}$, and $\widetilde N$ are homology spheres.

\end{proof}

Before we proceed, we establish some notation. We denote by $E_\ell^{4m+2}$ a closed, simply manifold whose four-periodic cohomology ring is determined by the conditions that some $y \in H^2(M;\Z)$ and $x \in H^4(M;\Z)$ are generators and $y^2 = \ell x$ for some non-negative integer $\ell$. Note that $E_0$ is a cohomology $S^2 \times \HP^m$ and  $E_1$ is a cohomology $\CP^{2m+1}$.

Similarly, we denote by $N_j^{m}$ a closed, simply connected manifold whose four-periodic cohomology ring is determined by the  following conditions when $m \equiv 2 \bmod 4$:
\begin{equation}\label{e:1} 
\begin{matrix}
H^1(N_j;\Z) = H^2(N_j;\Z) = 0 &\mathrm{~~and~~} \\ H^3(N_j;\Z) = H^4(N_j;\Z) = \Z_j &\textrm{ for some} \,\, j \geq 2,
\end{matrix}
\end{equation}
	 and by the following conditions when $m \equiv 3 \bmod 4$:
	 \begin{equation}\label{e:2}
	 \begin{matrix}
	H^1(N_j;\Z) = H^2(N_j;\Z) = H^3(N_j;\Z) = 0 \mathrm{~~and~~} \\
	H^4(N_j;\Z) = \Z_j  \textrm{ for some} \,\, j \geq 2.
	\end{matrix}
	\end{equation}

We note that while two of the conclusions of the following lemma include $E_{\ell}$ with $\ell \geq 2$ or $N_j$ with $j \geq 2$, it is not known whether such manifolds actually exist nor if they do exist whether they admit a metric of positive sectional curvature.

\begin{Codim4}\phantomsection\label{codim4} 
Suppose $N^m \subseteq M^{m+4}$ is an $m$-connected inclusion of closed, positively curved manifolds. If $m \geq 4$, then one of the following holds:
	\begin{enumerate}
	\item $N^m$ is a homotopy $S^m/\Z_k$ for some $k \geq 1$, $\CP^{\frac m 2}$, or a cohomology $\HP^{\frac m 4}$;
	\item $\widetilde N^m$ has the cohomology of $E_\ell^m$ or $N_j^{m}$, and $m \equiv 2 \bmod 4$;  or 
	\item $\widetilde N^m$ has the cohomology of $S^3 \times \HP^{\frac{m-3}{4}}$ or $N_j^{m}$, and $m \equiv 3 \bmod 4$.  
	\end{enumerate}
Moreover, the same conclusions hold for $M$, with a slight modification to the formulas for the dimensions. 
\end{Codim4}

\begin{remark} The simply connected case is proved for $m \equiv 0,1 \bmod 4$ in Lemma 7.1 in Wilking \cite{Wil03}. For the cases $m \equiv 2,3\bmod 4$, we note that  Proposition 7.4 in  \cite{Wil03} neither implies nor is implied by these cases, but the statements are consistent. \end{remark}

\begin{proof} 
The computation of the cohomology groups of the universal cover of $N^m$ follows by combining Poincar\'e Duality with the definition of four-periodicity. To finish the proof, we need to consider the fundamental group.

Note that for the cases where $\widetilde N$ is a cohomology $\CP^{\frac m 2}$ with $m \equiv 0 \bmod 4$ or $\HP^{\frac m 4}$, it is known for topological reasons that there is no free action by any non-trivial group (see Example 4L.4 in Hatcher \cite{Hat02}). Hence $\pi_1(N)$ is trivial in these cases, and Conclusion (1) holds. For the case of $\CP^{\frac m 2}$ with $m \equiv 2 \bmod 4$, this falls under Conclusion (2).

Suppose now that $\widetilde N$ is a cohomology sphere. If $m$ is even, then $\pi_1(N)$ is either trivial or isomorphic to $\Z_2$ by Synge's theorem. If instead $m$ is odd, then we may argue as in Corollary 1.9 in \cite{FRW13} to conclude that $\pi_1(N)$ is cyclic. It then follows that $N$ is homotopy equivalent to $S^m/\Z_k$ for some $k \geq 1$ by Theorem \ref{thm:cohomology-to-homotopy}.
\end{proof}

\smallskip\section{Proof of Theorem \ref{thm:n}}\label{sec:n}

Our goal in this section is to prove Theorem \ref{thm:n}. The starting point is the following result due to Fang and Grove (see \cite{FG}): 
\begin{theorem}\cite{FG}\label{thm:FangGrove}
If a closed, positively curved manifold $M^n$ admits an isometric $\Z_2$-action with fixed-point set of codimension one, then $M$ is diffeomorphic to $S^n$ or $\RP^n$.

Moreover, if the $\Z_2$-action extends to an isometric action by a group $W$ generated by involutions each with this property, then there is a linear $W$-action on $S^n$ or $\RP^n$ such that the diffeomorphism may be chosen to be equivariant to a linear $\Z_2$-action.
\end{theorem}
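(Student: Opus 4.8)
The plan is to establish the single-reflection case first, reconstructing the manifold explicitly by an exponential-map argument, and then to bootstrap to a reflection group $W$ by induction on the dimension. Low dimensions ($n\le 2$) are classical, so assume $n\ge 3$.

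So suppose $\sigma$ is a single reflection and let $N_0\subseteq\mathrm{Fix}(\sigma)$ be a component of codimension one; it is closed and totally geodesic. By Part~(1) of the \hyperref[CL]{Connectedness Lemma}, the inclusion $N_0\embedded M$ is $(n-1)$-connected, so $\pi_1(N_0)\cong\pi_1(M)$, and since one of $M,N_0$ is even-dimensional, Synge's theorem forces $\pi_1(M)\in\{1,\Z_2\}$. Next I would argue that the open complement $M\setminus N_0$ is acyclic: the $(n-1)$-connectedness of the inclusion says the pair $(M,N_0)$ is $(n-1)$-connected, hence $H_*(M,N_0)$ vanishes below degree $n$, and Lefschetz duality turns this into $\widetilde H^*(M\setminus N_0)=0$. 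Now the one genuinely geometric input enters: by critical-point theory for $f=d(\cdot,N_0)$ in positive curvature (following Grove and Searle \cite{GS}), $f$ has no critical points off $N_0$ except on the compact maximum set $C=f^{-1}(\max f)$, so $M\setminus N_0$ deformation retracts onto the closed submanifold $C$; being acyclic — or a disjoint union of two acyclic pieces, according to whether $N_0$ is one- or two-sided — $C$ is a single point if $N_0$ is one-sided, and a pair of points if $N_0$ is two-sided.

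To finish this case I would reconstruct $M$ from $C$ via the exponential map, whose $f$-flow lines are geodesics meeting $N_0$ orthogonally. If $N_0$ is one-sided, then $C=\{p\}$ is a single $\sigma$-fixed point, $\pi_1(M)=\Z_2$, and $d\sigma_p=-\mathrm{id}$ on $T_pM$ — by Theorem~\ref{t:Z2fp}, or because a positive-dimensional fixed component through $p$ would, by Part~(2) of the \hyperref[CL]{Connectedness Lemma}, be forced to meet the disjoint set $N_0$. Then $\exp_p$ identifies $M$ with a closed metric ball of radius $d(p,N_0)$ modulo the antipodal identification of its boundary sphere, that is, $M\cong\RP^n$ with $\sigma$ a linear reflection. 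If $N_0$ is two-sided, then $C=\{p_1,p_2\}$ is swapped by $\sigma$, the maps $\exp_{p_i}$ identify the two sides of $N_0$ with disks $D^n$, and $\sigma$-equivariance makes the boundary gluing the linear isometry $d\sigma_{p_1}\colon T_{p_1}M\to T_{p_2}M$; hence $M\cong S^n$ with $\sigma$ a linear reflection (and $\pi_1(M)=1$, which matches Synge). The key point is that in both cases the gluing data is linear, so no exotic-smoothness issues arise.

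For a reflection group $W=\langle\sigma_1,\dots,\sigma_k\rangle$ I would induct on $\dim M$. Total geodesy makes the angle between two walls $N_i,N_j$ constant along $N_i\cap N_j$, so the array of dihedral angles pins down a finite Coxeter system and thus a linear model $W_{\mathrm{lin}}\subseteq\gO(n+1)$ with the same chamber combinatorics; the goal is a $W$-equivariant diffeomorphism $M\to S^n$ (or $\RP^n=S^n/\{\pm 1\}$). For each wall $N_i$, its stabilizer in $W$ acts on the positively curved, totally geodesic $N_i^{n-1}$ as a reflection group, so by induction $N_i$ carries an equivariant linear model; splicing these with the single-reflection model for $M$ coming from one $\sigma_i$, and using uniqueness of equivariant tubular-neighborhood gluings, assembles the desired diffeomorphism. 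I expect the main obstacle to be precisely this assembly step — upgrading a family of diffeomorphisms, each equivariant under only one reflection, to a single $W$-equivariant one — which requires checking the wall-by-wall identifications are mutually compatible, a bookkeeping argument driven by the Coxeter combinatorics and the rigidity of the chamber $M/W$ in positive curvature.
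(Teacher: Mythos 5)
This statement is imported verbatim from Fang and Grove \cite{FG}; the paper gives no proof of it, so there is nothing internal to compare your argument against. Judged on its own terms, your outline of the single-reflection case follows the standard Grove--Searle-style strategy (totally geodesic hypersurface, no critical points of $d(\cdot,N_0)$ away from the maximum set, double-disk reconstruction), and the two-sided case is essentially complete once you observe, as you do, that $\sigma$ interchanges the two halves so that $M$ is the double of a standard disk. Two points, however, are genuine gaps rather than omitted routine detail. First, in the one-sided case the assertion that $\exp_p$ exhibits $M$ as a closed metric ball with antipodal boundary identification --- hence standard $\RP^n$ --- does not follow from what you have established: the critical-point argument only yields that $M$ is a disk glued to a twisted $I$-bundle over $N_0\simeq\RP^{n-1}$ along some diffeomorphism of $S^{n-1}$, and ruling out an exotic identification (equivalently, showing that the free involution on the double cover $S^n$ is standard) is precisely where real work is needed; it is the same issue this paper handles elsewhere by invoking Hirsch and Milnor \cite{HirMil} in the proof of Theorem \ref{thm:cohomology-to-homotopy}. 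Relatedly, you deformation-retract $M\setminus N_0$ onto ``the closed submanifold $C$'' before knowing that $C$ is a point; the set of points at maximal distance is not a priori a submanifold, so the correct order is to prove $C$ is a point directly (concavity of the distance to a totally geodesic hypersurface in positive curvature, or a second-variation argument on a minimal geodesic joining two points of $C$) and only then read off the topology.

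Second, the ``moreover'' clause --- the equivariant statement for the reflection group $W$ --- is not actually proved. Your plan (extract a Coxeter system from the dihedral angles, induct on dimension along the walls, then ``splice'') names plausible ingredients, but the assembly step you yourself flag as ``the main obstacle'' is the entire content of that half of the theorem: one must show that the chamber $M/W$ is a spherical Coxeter chamber and that the wall-by-wall identifications, each equivariant under only one generator, cohere into a single $W$-equivariant diffeomorphism, and no mechanism for this is supplied. As written, the proposal establishes (modulo the first gap) only the non-equivariant diffeomorphism for a single reflection.
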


To prove Theorem \ref{thm:n}, it suffices to prove that a $\Z_2^r$-action on $M^n$ having a fixed point has the property that the $\Z_2^r$ is generated by elements whose fixed-point set is of codimension one. This is straightforward, but we include the proof for the sake of completeness.

\begin{lemma}\label{obs} Let $M^n$ be an $n$-dimensional closed, positively curved Riemannian manifold admitting an effective, isometric $\mathbb{Z}_2^r$-action 
fixing $p\in M$. Then $r\leq n$, and in the case of equality, there exist involutions $\iota_1,\ldots,\iota_n$ generating $\Z_2^n$ such that $\codim(M^\iota_x) = 1$ for all $i$.
\end{lemma}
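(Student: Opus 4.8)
The plan is to analyze the isotropy representation $\rho\colon \Z_2^r \to \Z_2^n$ at the fixed point $p$ together with Theorem \ref{thm:FangGrove}. Since the action is effective and $p$ is fixed, the isotropy representation $\rho$ is injective, which immediately gives $r \leq n$; this handles the first assertion. So the content is entirely in the equality case $r = n$, where we must produce involutions $\iota_1,\dots,\iota_n$ generating $\Z_2^n$ each of whose fixed-point set component through $p$ has codimension one.

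First I would translate the codimension condition into the language of Section \ref{sec:ECC}: as noted after the definition of Hamming weight, $\codim(M^\iota_p)$ equals the Hamming weight $|\rho(\iota)|$ of the image involution, viewing $\rho(\iota)$ as a diagonal $\pm1$ matrix on $T_pM$. So I want weight-one elements in the image of $\rho$. When $r = n$, the map $\rho\colon \Z_2^n \to \Z_2^n$ is an injection between groups of the same order, hence an isomorphism. Therefore every element of $\Z_2^n$ — in particular each of the $n$ standard weight-one generators $e_1,\dots,e_n$ — lies in the image. Pulling these back along $\rho^{-1}$ gives involutions $\iota_i := \rho^{-1}(e_i) \in \Z_2^n$ with $|\rho(\iota_i)| = 1$, i.e.\ $\codim(M^{\iota_i}_p) = 1$; and since $\{e_i\}$ generates $\Z_2^n$ and $\rho$ is an isomorphism, $\{\iota_i\}$ generates $\Z_2^n$. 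That is the whole argument.

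The only subtlety — and the step I would be most careful about — is the implicit claim that the codimension of $M^{\iota}_p$ (the component of the fixed-point set of $\iota$ containing $p$) is exactly the number of $-1$ eigenvalues of $\iota$ acting on $T_pM$, rather than merely an inequality. This is standard: for an isometric involution fixing $p$, the exponential map intertwines the linear action on $T_pM$ with the action on a normal neighborhood, so $T_p(M^{\iota}_p)$ is precisely the $+1$-eigenspace of $d\iota_p$, and its orthogonal complement (the $-1$-eigenspace) has dimension equal to the Hamming weight of the corresponding involution in $\Z_2^n$; this is exactly the remark recorded right after the definition of Hamming weight in Section \ref{sec:ECC}. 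No positive-curvature input is needed for the lemma itself — it is pure linear algebra plus the injectivity of the isotropy representation — and Theorem \ref{thm:FangGrove} is invoked only afterward (outside this lemma) to upgrade the codimension-one data to the diffeomorphism classification in Theorem \ref{thm:n}. I would present the proof in two short sentences and not belabor it, consistent with the paper's remark that it is included only for completeness.
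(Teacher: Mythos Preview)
Your proof is correct and follows essentially the same approach as the paper: both use injectivity of the isotropy representation $\rho\colon \Z_2^r \to \Z_2^n$ to get $r \leq n$, and in the equality case use that $\rho$ is then a bijection to pull back the standard weight-one generators of $\Z_2^n$. The paper phrases the last step as ``after pre-composing with an automorphism of $\Z_2^n$, $\rho$ is the identity,'' which is equivalent to your formulation $\iota_i = \rho^{-1}(e_i)$; the only extra detail the paper spells out is the simultaneous diagonalization justifying that the image lies in the diagonal $\Z_2^n \subseteq \gO(n)$, which you implicitly use by invoking the setup from Section~\ref{sec:ECC}.
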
 

\begin{proof} 
Note that the isotropy representation at $x$ 
is an injective map $\rho:\Z_2^r\embedded\gO(T_x M)\cong\gO(n)$. Moreover, the images 
of elements of $\Z_2^r$ under $\rho$ are 
symmetric matrices, as they are orthogonal matrices of order two. Since these symmetric matrices commute, they are simultaneously diagonalizable. This means that we may conjugate $\rho$
so that its image lies in the subgroup $\Z_2^n\subseteq\gO(n)$ of diagonal matrices 
of order two. Since $\rho$  
is injective, we have $r\leq n$. 
Now, in the case of equality, after possibly pre-composing with an automorphism of $\Z_2^n$, we see that  $\rho$  
is the identity map. In particular, there exists a generating set $\iota_1,\ldots,\iota_n \in \Z_2^n$ such that each $\iota_i$ fixes exactly $n-1$ directions in the tangent space $T_x M$ and hence has fixed-point set of codimension one.\end{proof}

\smallskip\section{Proof of Theorem \ref{thm:nover2}}\label{sec:nover2}

We begin this section with the  proofs of two results on the fundamental group, the first of which is used in the proof of Theorem \ref{thm:nover2}.  
The proofs illustrate again the utility of induction and error correcting codes in positive curvature. We finish this section with a proof of Theorem \ref{thm:nover2}.

\subsection{Fundamental Group Results}
We first prove a  Maximal $\Z_2$-Symmetry Rank lemma for lens spaces.
\begin{lemma} \label{lem:Z2MSRlens}
If $\Z_2^r$ acts isometrically on a closed, positively curved manifold $M^n$ with $|\, \pi_1(M)| > 2$, and if the action has a fixed point, then $r \leq \tfrac{n+1}{2}$.
\end{lemma}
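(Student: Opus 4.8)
The plan is to induct on $n$, exploiting the isotropy representation at the fixed point $x$ together with Lemma \ref{lem:ECC} to produce a totally geodesic submanifold of small codimension whose fundamental group is still large, and then to descend to that submanifold.

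I would first dispose of the low dimensions. For $n\le 2$ there is nothing to prove, since a closed, positively curved surface is $S^2$ or $\RP^2$ and hence has $|\pi_1|\le 2$. For $n=3$, the inequality $r>\tfrac{n+1}{2}$ forces $r\ge n$, so Theorem \ref{thm:n} gives $M\simeq S^3$ or $\RP^3$, contradicting $|\pi_1(M)|>2$. For $n=4$, if $r\ge 3$ then Part (3) of Lemma \ref{lem:ECC} produces an involution $\iota$ with $\codim(M_x^{\iota})\le 2$; since $|\pi_1(M)|>2$, Theorem \ref{thm:FangGrove} forbids any involution from having a hypersurface fixed-point-set component, so $\codim(M_x^{\iota})=2$, the inclusion $M_x^{\iota}\embedded M$ is $1$-connected by the Connectedness Lemma, and $M_x^{\iota}$ is a closed, positively curved surface surjecting onto $\pi_1(M)$ --- again forcing $|\pi_1(M)|\le 2$, a contradiction.

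For the inductive step $n\ge 5$, suppose toward a contradiction that $r>\tfrac{n+1}{2}$. Integrality of $r$ (and of $\tfrac{n+1}{2}$ when $n$ is odd), together with Part (3) of Lemma \ref{lem:ECC} in the case $n=12$, guarantees an involution $\iota\in\Z_2^r$ with $\codim(M_x^{\iota})\le\tfrac{n+3}{4}$ (respectively $\le 4$ when $n=12$). Let $N:=M_x^{\iota}$ be chosen of maximal dimension among fixed-point-set components at $x$ of involutions in $\Z_2^r$, and set $k:=\codim(N\subseteq M)$; then $k\le\tfrac{n-1}{2}$, which is exactly where $n\ge 5$ is used. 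Since $\Z_2^r$ is abelian and fixes $x$, it acts on $N$, and a maximality argument as in the proof of Theorem \ref{thm:curvature_free} shows the kernel of this action is precisely $\langle\iota\rangle$, so $\Z_2^{r-1}$ acts effectively on $N$ with fixed point $x$. Because $|\pi_1(M)|>2$, Theorem \ref{thm:FangGrove} rules out $k=1$, so $k\ge 2$; and since $k\le\tfrac{n-1}{2}$, the Connectedness Lemma makes $N\embedded M$ at least $2$-connected, whence $\pi_1(N)\cong\pi_1(M)$ and in particular $|\pi_1(N)|>2$. Applying the inductive hypothesis to the $\Z_2^{r-1}$-action on $N^{n-k}$ yields $r-1\le\tfrac{(n-k)+1}{2}$, hence $r\le\tfrac{n-k+3}{2}\le\tfrac{n+1}{2}$ using $k\ge 2$, contradicting $r>\tfrac{n+1}{2}$.

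I expect the only real obstacle to be bookkeeping: one must verify that the crude codimension bound $\tfrac{n+3}{4}$ coming from the error-correcting-code estimate does not exceed $\tfrac{n-1}{2}$ (true precisely for $n\ge 5$), and that in each exceptional dimension $n\in J=\{3,4,7,11,12,23\}$, where Lemma \ref{lem:ECC} demands slightly more symmetry, the hypothesis $r>\tfrac{n+1}{2}$ together with integrality (and Part (3) of Lemma \ref{lem:ECC} for $n=4,12$) still supplies enough symmetry to run the argument. These numerical checks, rather than any conceptual point, are the main work; the conceptual content lies entirely in pairing Lemma \ref{lem:ECC} with the Connectedness Lemma and Theorem \ref{thm:FangGrove}.
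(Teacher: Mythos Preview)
Your proof is correct and follows essentially the same strategy as the paper's: induction on $n$, using the error-correcting code estimate (Lemma \ref{lem:ECC}) to find a small-codimension fixed-point set, then the Connectedness Lemma to transfer $\pi_1$. The one simplification you missed is that Synge's theorem makes the even-dimensional case vacuous (a closed, positively curved, even-dimensional manifold has $|\pi_1|\le 2$), so your separate treatment of $n=4$ and $n=12$ is unnecessary; the paper restricts immediately to odd $n$, where $r>\tfrac{n+1}{2}$ automatically gives $r\ge \tfrac{n+1}{2}+\delta_J(n)$ for all $n\in J$ and Part (1) of Lemma \ref{lem:ECC} applies without further bookkeeping.
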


We note that equality is achieved by lens spaces in all odd dimensions (see Section \ref{sec:Examples}).

\begin{proof}[Proof of Lemma \ref{lem:Z2MSRlens}]
Note by Synge's theorem that the result holds vacuously for even dimensions. 
In odd dimensions, it suffices to show that $r \geq \tfrac{n+3}{2}$ implies $|\pi_1(M)| \leq 2$. This holds for $n = 3$ by Theorem \ref{thm:n}, so we proceed by induction over $n$ and assume $n \geq 5$. 

By Part (1) of Lemma \ref{lem:ECC}, the bound on $r$ implies the existence of a fixed-point set component $N$ of an element $\iota\in \Z_2^r$ with the property that $\codim(N) \leq \tfrac{n+3}{4}$. In particular, since $n\geq 5$,  
we have $\pi_1(M) \cong \pi_1(N)$ by the \hyperref[CL]{Connectedness Lemma}. As the result follows by the \hyperref[Codim1]{Codimension One Lemma} if $\codim(N) = 1$, we assume $\codim(N) \geq 2$.

We also assume that $N$ is maximal, and so $N$ admits an effective, isometric $\Z_2^{r-1}$ action with a fixed point. Combining the lower bounds  on $r$ and $\codim(N)$, we see that the induction hypothesis applies to $N$. Hence $|\pi_1(M)| = |\pi_1(N)| \leq 2$.
\end{proof}

We now prove a Maximal $\Z_2$-Symmetry Rank lemma for space forms. 
\begin{lemma}
\label{lem:Z2MSRform}
If $\Z_2^r$ acts effectively, isometrically, and with a fixed point on a closed, positively curved manifold $M^n$ such that $\pi_1(M)$ is not cyclic, then $r \leq \tfrac{n+5}{4}$.
\end{lemma}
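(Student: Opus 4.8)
\textbf{Proof proposal for Lemma \ref{lem:Z2MSRform}.}

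The plan is to run an induction over $n$ entirely parallel to the proof of Lemma \ref{lem:Z2MSRlens}, using the error correcting code estimate of Lemma \ref{lem:ECC} to produce a low-codimension fixed-point set component and then bootstrapping via the Connectedness Lemma. First I would dispose of the even-dimensional case: by Synge's theorem a closed, even-dimensional, positively curved manifold has $\pi_1(M)$ trivial or $\Z_2$, so if $\pi_1(M)$ is non-cyclic, $n$ must be odd, and the bound $r \leq \tfrac{n+5}{4}$ is automatic in the remaining case. So I would assume $n$ is odd and, aiming for the contrapositive, suppose $r \geq \tfrac{n+9}{4}$ (i.e.\ $r > \tfrac{n+5}{4}$ in the relevant integral sense) and prove that $\pi_1(M)$ is then cyclic. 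I would fix the base of the induction directly — the smallest odd $n$ for which the hypothesis $r \geq \tfrac{n+9}{4}$ is nonvacuous, where the conclusion follows from Theorem \ref{thm:n} (if $r$ is forced up to $n$) or from the Codimension One/Two Lemmas together with Lemma \ref{lem:ECCshorten}.

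For the induction step with $n$ odd and large, the key move is to apply Part (2) of Lemma \ref{lem:ECC}: since $r \geq \tfrac{n+9}{4} \geq \tfrac{n+7}{4}$, there is an involution $\iota \in \Z_2^r$ whose fixed-point set component $N = M_x^\iota$ at a fixed point $x$ has $\codim(N) \leq \tfrac{n-1}{2}$. By the Connectedness Lemma, the inclusion $N \hookrightarrow M$ is then $(n - 2\codim(N) + 1)$-connected, hence at least $1$-connected, so $\pi_1(N) \cong \pi_1(M)$; in particular $\pi_1(N)$ is non-cyclic. Choosing $N$ maximal, $\Z_2^r$ acts on $N$ with kernel exactly $\langle\iota\rangle$, so $N$ inherits an effective, isometric $\Z_2^{r-1}$-action with a fixed point. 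Writing $\dim N = n - k$ with $k = \codim(N)$, I would check that $r - 1 \geq \tfrac{(n-k)+9}{4}$, which follows from $r \geq \tfrac{n+9}{4}$ precisely because $k \leq \tfrac{n-1}{2}$ forces $r - 1 - \tfrac{(n-k)+9}{4} \geq \tfrac{n+9}{4} - 1 - \tfrac{n+9}{4} + \tfrac{k}{4} = \tfrac{k}{4} - 1 \geq 0$ once $k \geq 4$ — and the small-codimension cases $k \in \{1,2,3\}$ are handled separately by the Codimension One, Two, and Three Lemmas (noting that the hypothesis on $r$ is strong enough to invoke their $\Z_2^2$-variants, producing a second involution of small codimension via Lemma \ref{lem:ECCshorten}), each of which forces $\pi_1(N)$, hence $\pi_1(M)$, to be cyclic — contradiction. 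In the remaining range $k \geq 4$ the induction hypothesis applies to $N$ and yields that $\pi_1(N) \cong \pi_1(M)$ is cyclic, again a contradiction, completing the induction.

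The main obstacle I anticipate is the bookkeeping at the low end: making sure the numerology $r - 1 \geq \tfrac{(n-k)+9}{4}$ survives for every admissible $k$, and that the excluded small dimensions in Lemma \ref{lem:ECC} (the set $J$, and the odd-dimensional exceptions like $n = 7$ in Part (2)) do not derail the induction — these will need to be checked by hand, possibly against the code tables in the Appendix, exactly as in the proofs of Lemma \ref{lem:ECC} and Lemma \ref{lem:Z2MSRlens}. A secondary subtlety is that when $\codim(N) \in \{2,3\}$ one must verify the hypotheses of the $\Z_2^2$-versions of the Codimension Two and Three Lemmas hold, i.e.\ that there genuinely is a second involution whose fixed-point set component has codimension at most $\tfrac{m+1}{2}$ inside $N$; this is exactly what Lemma \ref{lem:ECCshorten} provides once $r$ is large enough, but one must confirm $r - 1$ meets the rank thresholds there. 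Apart from this, the argument is a routine adaptation of Lemma \ref{lem:Z2MSRlens}.
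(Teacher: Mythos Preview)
Your overall strategy matches the paper's: prove the contrapositive by induction on odd $n$, use Part~(2) of Lemma~\ref{lem:ECC} to find a maximal $N$ with $\codim(N) \leq \tfrac{n-1}{2}$, transfer $\pi_1$ via the Connectedness Lemma, and apply the induction hypothesis when $k = \codim(N) \geq 4$. Two issues deserve comment, one minor and one more substantive.

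First, the threshold. For integer $r$, the condition $r > \tfrac{n+5}{4}$ is equivalent to the real inequality $r \geq \tfrac{n+7}{4}$, not $r \geq \tfrac{n+9}{4}$: when $n \equiv 1 \bmod 4$, the quantity $\tfrac{n+5}{4}$ is a half-integer and the next integer up is $\tfrac{n+7}{4}$. The paper works with $r \geq \tfrac{n+7}{4}$ throughout; your choice of $\tfrac{n+9}{4}$ would leave the case $r = \tfrac{n+7}{4}$ uncovered for $n \equiv 1 \bmod 4$. The base case is then $n = 3$, where $r \geq 3 = n$ and Theorem~\ref{thm:n} applies.

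Second, and this is the real point: your treatment of $k \in \{1,2,3\}$ is unnecessarily complicated and runs into a genuine obstacle. You plan to invoke the $\Z_2^2$-variants of the Codimension Two and Three Lemmas, supplying the second involution via Lemma~\ref{lem:ECCshorten}. But Lemma~\ref{lem:ECCshorten} carries an \emph{upper} bound on the ambient dimension ($n \leq 12$ or $n \leq 7$), so for large $n$ it does not produce the required second involution, and the assumption $r \geq \tfrac{n+7}{4}$ is far too weak to use Part~(1) of Lemma~\ref{lem:ECC} on $N$ instead. The paper sidesteps this completely with a parity observation you have overlooked: since $n$ is odd, if $k \in \{1,3\}$ then $\dim N$ is \emph{even}, and Synge's theorem immediately gives $\pi_1(N) \in \{1, \Z_2\}$, hence cyclic. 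If $k = 2$, then $M$ itself is odd-dimensional, so Part~(1) of the Codimension Two Lemma applies directly (no second involution needed) and forces $\pi_1(N)$ cyclic. With these two replacements the induction runs exactly as you describe, and no case-by-case checking against code tables or Lemma~\ref{lem:ECCshorten} is needed.
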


We remark that equality is achieved by spherical space forms $S^{4r-5}/\Gamma$ for any finite subgroup $\Gamma \subseteq \Sp(1)$ (see Section \ref{sec:Examples}). Since $\Gamma$ may be chosen to be non-cyclic, the bound on $r$ is optimal.

\begin{proof}
As in the  proof of Lemma \ref{lem:Z2MSRlens}, it suffices to show that if $n$ is odd and $r \geq \tfrac{n+7}{4}$, then $\pi_1(M)$ is cyclic. This holds for $n = 3$ by Theorem \ref{thm:n}, so we assume that $n \geq 5$ and proceed by induction on $n$.

By Part (2) of Lemma \ref{lem:ECC}, there is a fixed-point set component of some $\iota\in \Z_2^r$ with codimension $\leq \tfrac{n-1}{2}$. Without loss of generality, we may assume that $N$ has maximal dimension and hence admits an induced isometric and effective $\Z_2^{r-1}$ action with a fixed point. If $\codim(N) \in \{1,3\}$, then $\pi_1(M)$ is trivial or $\Z_2$ by Synge's theorem and the \hyperref[CL]{Connectedness Lemma}, and if $\codim(N) = 2$, then $\pi_1(M)$ is cyclic by Part (1) of the \hyperref[codim2]{Codimension Two Lemma}. We may therefore assume $\codim(N) \geq 4$. Using the lower bounds on $r$ and $\codim(N)$, we see that the induction hypothesis applies to $N$, so $\pi_1(N)$ is cyclic. The same now holds for $\pi_1(M)$ by the \hyperref[CL]{Connectedness Lemma} since $\codim(N)\leq \tfrac{n-1}{2}$.
\end{proof}

\subsection{The Proof of Theorem \ref{thm:nover2}}

We are now in a position to prove Theorem \ref{thm:nover2}, which is an immediate consequence of Theorem \ref{thm:nover2PLUS} below.

Recall that we set \[B = \{4, 12\} \cup \{3, 7, 11, 23\},\]
and defined $\delta_B(n)$ to be one for $n \in B$ and zero otherwise. Theorem \ref{thm:nover2} is a consequence of the following theorem.

\begin{theorem}\label{thm:nover2PLUS}
Let $M^n$ be a closed, positively curved manifold with $n \geq 2$.
Assume $\Z_2^r$ acts on $M^n$ and has a fixed point.
If $r > \tfrac{n}{2} + \delta_B(n)$, then one of the following holds:
	\begin{enumerate}
	\item $M$ is homotopy equivalent to $S^n$ or $\RP^n$. 
	\item $M$ is homotopy equivalent to $\CP^{\frac n 2}$, $r = \tfrac{n}{2} + 1$, and $n\not\in B$.
	\item $M$ is homotopy equivalent to $S^n/\Z_{\ell}$ for some $\ell \geq 3$, $r = \tfrac{n+1}{2}$, and $n\not\in B$.
	\end{enumerate}
\end{theorem}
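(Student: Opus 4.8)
The plan is to establish the stronger Theorem~\ref{thm:nover2PLUS} by induction on $n$. Theorem~\ref{thm:n} settles the smallest cases: for $n\in\{2,3,4\}$ the hypothesis $r>\tfrac n2+\delta_J(n)$ forces $r\geq n$, whence $M$ is $S^n$ or $\RP^n$ and Conclusion~(1) holds. For the inductive step, observe first that, $r$ being an integer, $r>\tfrac n2+\delta_J(n)$ is equivalent to $r\geq\tfrac{n+1}2+\delta_J(n)$, which is exactly the hypothesis of Lemma~\ref{lem:ECC}(1). Applying that lemma to the isotropy representation at a fixed point $x$ produces an involution $\iota$ with $N:=M_x^\iota$ of codimension $k$, where $1\leq k\leq\tfrac{n+3}4$. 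I would choose $N$ so that $k$ is minimal among the codimensions of all fixed-point components of involutions in $\Z_2^r$; then the pointwise stabilizer of $N$ must be exactly $\langle\iota\rangle$, for otherwise the Borel Formula (Theorem~\ref{Borel}) would exhibit $N$ as a transverse intersection of two or more fixed-point components of involutions, one of which has strictly smaller codimension, contradicting minimality. Hence $N$ carries an effective, isometric $\Z_2^{r-1}$-action with the fixed point $x$.

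The argument then branches on $k$. If $k=1$, then $N$ is a hypersurface fixed by the involution $\iota$, so Theorem~\ref{thm:FangGrove} of Fang and Grove identifies $M$ with $S^n$ or $\RP^n$ and Conclusion~(1) holds. If $k\geq2$, the plan is to recurse on $N$: a direct check using $r-1\geq\tfrac{n-1}2+\delta_J(n)$ shows $r-1>\tfrac{n-k}2+\delta_J(n-k)$ in every case except the boundary configurations with $k\in\{2,3\}$, $n-k\in J$, and $n\notin J$. Those remaining configurations are handled by hand using the \hyperref[codim2]{Codimension Two Lemma} and the \hyperref[codim3]{Codimension Three Lemma}, fed with a second small-codimension involution $\iota_2\notin\langle\iota\rangle$ obtained by shortening the code $\Z_2^{r-1}\embedded\Z_2^{n-1}$ (Lemma~\ref{lem:ECCshorten}, or a fresh application of Lemma~\ref{lem:ECC-Hamming} together with the tables of the Appendix). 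Otherwise the inductive hypothesis applies to $N^{n-k}$ with its $\Z_2^{r-1}$-action, so $N$ is homotopy equivalent to $S^{n-k}$, $\RP^{n-k}$, $\CP^{(n-k)/2}$, or $S^{n-k}/\Z_l$.

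Next I would transfer the cohomological structure up to $M$. Passing to universal covers, $\widetilde N\embedded\widetilde M$ remains a totally geodesic inclusion of codimension $k\leq\tfrac{n+3}4$ with the same connectivity, and $\widetilde N$ is a simply connected manifold with singly generated $\Z_2$-cohomology, namely that of a sphere or of $\CP^{(n-k)/2}$. Theorem~\ref{thm:purple-OneSubmanifold} (or Proposition~\ref{pro:purple}) then forces $\widetilde M$ to have the cohomology of a sphere, of $\CP^{n/2}$, or of a quaternionic projective space $\HP^j$ with $j\geq2$; the last alternative is excluded by the rank bound of Theorem~\ref{thm:curvature_free}(3), since such a space admits no free action and so equals $M$, which would then carry a $\Z_2$-torus of rank $r>\tfrac n2=2j$ with a fixed point, contradicting $r\leq j+2$. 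For the fundamental group, the \hyperref[CL]{Connectedness Lemma} gives $\pi_1(M)\cong\pi_1(N)$, and Synge's theorem together with Lemmas~\ref{lem:Z2MSRlens} and~\ref{lem:Z2MSRform} independently forces $\pi_1(M)$ to be cyclic, since $r>\tfrac n2>\tfrac{n+5}4$ for $n>5$ (the cases $n\leq5$ having been settled). Feeding ``$\widetilde M$ is a cohomology sphere or $\CP^{n/2}$, and $\pi_1(M)$ is cyclic'' into Theorem~\ref{thm:cohomology-to-homotopy} yields that $M$ is homotopy equivalent to $S^n$, $\RP^n$, $S^n/\Z_k$ with $k\geq3$ (forcing $n$ odd), or $\CP^{n/2}$; the rank equalities $r=\tfrac n2+1$ of Conclusion~(2) and $r=\tfrac{n+1}2$ of Conclusion~(3) then follow from Theorem~\ref{thm:curvature_free}(2) and Lemma~\ref{lem:Z2MSRlens}, and in turn force $n\notin J$, because for $n\in J$ the strengthened hypothesis $r>\tfrac n2+1$ is incompatible with these equalities.

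The main obstacle, I expect, is not an isolated step but the coordination at the edge of the estimates: checking the finitely many boundary configurations where the recursion fails by a hair (those with $n-k\in J$), and the bookkeeping needed to rule out every homotopy type absent from the conclusions — quaternionic projective spaces, quotients $\CP^{\mathrm{odd}}/\Z_2$, and space forms with non-cyclic fundamental group. Excluding $\CP^{\mathrm{odd}}/\Z_2$ is the subtlest point: unlike the torus setting, where one would invoke Theorem~\ref{smoothbound}, lifting the $\Z_2$-torus action to $\CP^{2m+1}$ need not yield an elementary abelian group, so one relies instead on the observation above, that the cohomology-transfer step makes $\widetilde M$ a cohomology $\CP^{n/2}$ only when $\widetilde N$ already is, which (since a manifold cannot be simultaneously a cohomology sphere and a cohomology $\CP$ except in dimension two) forces $N$, and hence $M$, to be simply connected.
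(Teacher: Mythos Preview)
Your proposal is correct and follows essentially the same induction-on-$n$ strategy as the paper: find a small-codimension involution via Lemma~\ref{lem:ECC}(1), handle $k=1$ directly, recurse on $N$ when the rank inequality survives, and patch the finitely many boundary configurations with the Codimension Two and Three Lemmas fed by a second small-weight involution. The paper organizes the casework slightly differently (separating $k=2$ with $n$ odd as its own case via Part~(1) of the \hyperref[codim2]{Codimension Two Lemma}, and restricting the ``by hand'' cases to $n-k\in\{4,12\}$ rather than all of $J$), and it lifts the topology of $N$ to $M$ directly via the \hyperref[CL]{Connectedness Lemma} rather than via Theorem~\ref{thm:purple-OneSubmanifold} followed by your $\HP$-exclusion step; but these are cosmetic differences, and your explicit discussion of why $\CP^{\mathrm{odd}}/\Z_2$ cannot arise is a welcome clarification of a point the paper leaves implicit.
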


\begin{proof}
We begin by outlining the strategy for the proof. First, we compute the cohomology of the universal cover $\widetilde M$ and the fundamental group of $M$ and show that it satisfies the hypotheses of Theorem \ref{thm:cohomology-to-homotopy}. The conclusions on the homotopy type of $M$ then follow. 

Second, given the homotopy type, we obtain the values for $r$ in Parts (2) and (3) as follows: in Part (2), $r>\tfrac n 2$ combined with the upper bound of Part (2) of Theorem \ref{thm:curvature_free} imply $r = \tfrac{n}{2} + 1$; similarly, for Part (3), $r>\tfrac n 2$ combined with the upper bound in Lemma \ref{lem:Z2MSRlens} gives us $r=\tfrac{n+1}{2}$.

Third, the cases $n \in \{2,3,4\}$ hold by Theorem \ref{thm:n}, so we assume that $n \geq 5$ and proceed by induction over $n$.

Let $x$ be a fixed point. By Part (1) of Lemma \ref{lem:ECC}, there exists a non-trivial involution $\iota \in \Z_2^r$ and a component $N^{n-k}$ of the fixed-point set of $\iota$ such that $k \leq \tfrac{n+3}{4}$. In particular, this implies that $N$ admits an induced isometric, effective action by a $\Z_2^{r-1}$ with a fixed point.

We now proceed to argue according to the codimension of $N$. There are four cases.

\smallskip\noindent{\bf Case 1:} Suppose $k = 1$.

Part (1) holds by the \hyperref[Codim1]{Codimension One Lemma}.

\smallskip\noindent{\bf Case 2:} Suppose $k = 2$ and $n$ is odd. 
Part (1) of the \hyperref[codim2]{Codimension Two Lemma} implies that $\widetilde M$ is a sphere and that $\pi_1(M)$ is cyclic. Hence Part (1) or Part (3) holds by the comments at the beginning of the proof.

\smallskip\noindent{\bf Case 3:} Suppose $(k, n) \in \{(2,6), (2,14), (3,15)\}$. 

Notice in these cases that $n - k \in \{4, 12\}$ and that we cannot apply the induction hypothesis to $N$. 
From the bound on $r$, we have $r \geq 4$ in the first case and $r \geq 8$ in the second and third cases. 
By Part (1) of Lemma \ref{lem:ECCshorten}, there exists a second involution $\iota_2$ independent from $\iota$
such that its fixed-point set component containing $x$ has codimension at most $2$ in the first case and at most $6$ in the other two cases. 

Applying either Part (3) of the  \hyperref[codim2]{Codimension Two Lemma} or Part (2) of the \hyperref[codim3]{Codimension Three Lemma}, we find that $\widetilde N$ is a cohomology sphere or complex projective space.
Applying Theorem \ref{thm:purple-OneSubmanifold}, 
we find that $\widetilde M$ is a cohomology sphere or complex projective space as well. Finally, $\pi_1(M)$ is trivial or $\Z_2$ by Synge's theorem, together with the fact that $\pi_1(M^{15}) \cong \pi_1(N^{12})$ in the case where $(k, n) = (3,15)$.

\smallskip\noindent{\bf Case 4:} We now consider all the other possibilities. 

We claim that $r - 1 > \tfrac{1}{2} \dim(N) + \delta_B(\dim(N))$, in which case, the induction hypothesis applies to $N$, and we have that Part (1), (2), or (3) holds for $N$. In particular, $\widetilde N$ has the cohomology of a sphere or complex projective space, and the same holds for $\widetilde M$ by the \hyperref[CL]{Connectedness Lemma}. In addition, $\pi_1(M) \cong \pi_1(N)$ is trivial in the latter case and cyclic in the former. By the \hyperref[CL]{Connectedness Lemma} and the estimate $\codim(N) \leq \tfrac{n+3}{4}$, the topology of $\widetilde M$ follows. 

To finish the proof, it suffices to prove the claim on the lower bound for $r-1$. 
The estimate is straightforward if $k \geq 4$. 
If $k = 3$ and $n$ is even, then the fact that $r$ is an integer implies $r \geq \tfrac{n}{2} + 1 + \delta_B(n)$, so the proof follows similarly.
If $k = 3$ and $n$ is odd, the proof is similar unless $\delta_B(n-3) = 1$. But this latter case does not occur since otherwise $(k, n)= (3,15)$ and we are back in Case (3). Next if $k \leq 2$, we may assume $k = 2$ and $n$ is even by Cases (1) and (2). Moreover we may assume $n - 2 \not\in \{4,12\}$ by Case (3), so $\delta_B(n-2) = 0$ and the claim again follows.
\end{proof}

\smallskip\section{Proof of Theorem \ref{thm:nover4}}\label{sec:nover4}

The main result of this section is Theorem \ref{thm:nover4PLUS}. It implies Theorem \ref{thm:nover4} for dimensions $n > 17$, as we show in the remarks at the end of this section. For dimensions $15$, $16$, and $17$, Theorem \ref{thm:nover4} follows by Theorem \ref{thm:nover4-smalldim}, which we prove later in this section.

In the following, $M \sim_* N$ means that $\pi_1(M) \cong \pi_1(N)$ and $H^*(\widetilde M;\Z) \cong H^*(\widetilde N;\Z)$. Also recall that the $E_\ell$ and $N_j$ listed in Conclusions 2 and 3 of Theorem \ref{thm:nover4PLUS} are, if they exist, manifolds that are closed, simply connected and have cohomology ring determined as in Displays  \eqref{e:1} and \eqref{e:2}. 
\begin{theorem}\label{thm:nover4PLUS}
Let $M^n$ be a closed, positively curved manifold, and assume $\Z_2^r$ acts effectively by isometries on $M$ with fixed point $x$. Fix some $j \in \{0,1,2,3,4\}$, and assume $F_j^{m_j}$ is the fixed-point set component at $x$ of some subgroup of $\Z_2^r$ such that the induced $\Z_2^r$-action on $F_j^{m_j}$ has kernel of rank $r - j$. If
	\[n > 3j+5 \hspace{.5in} \textrm{and} \hspace{.5in}  r \geq \frac{n+3+j}{4},\]
then one of the following holds:
	\begin{enumerate}
	\item $F_j^{m_j}$ is homotopy equivalent to $S^{m_j}$, $\RP^{m_j}$, $\CP^{\frac{m_j}{2}}$, or $S^{m_j}/\Z_q$ with $q \geq 3$;
	\smallskip\item $j = 2$ and $M \sim_* S^{4r-5}/\Gamma$, $(S^3/\Gamma) \times \HP^{r - 2}$, or $N_k^{4r-5}/\Gamma$ for some three-dimensional space form group $\Gamma$;
	\smallskip\item $j = 3$ and $M \sim_* E_\ell^{4r-6}/\Delta$ or $N_k^{4r-6}/\Delta$, where $\Delta$ is  trivial or isomorphic to $\Z_2$; or
	\smallskip\item $j = 4$ and $M \sim_* \HP^{r-2}$.
	\end{enumerate}
\end{theorem}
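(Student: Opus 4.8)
The plan is to run an induction on the corank parameter $j$, using the Connectedness Lemma and the Small Codimension Lemmas of Section~\ref{sec:ConnectednessLemma} together with the error-correcting-code estimates of Section~\ref{sec:ECC} at each step. The base case $j=0$ is exactly Theorem~\ref{thm:nover2PLUS} (with the dimension hypothesis $n > 5$ and $r \geq \tfrac{n+3}{4}$ comfortably implying $r > \tfrac n2 + \delta_J(n)$ once $n$ is large, so Conclusion~(1) holds for $F_0 = M$). For $j \geq 1$, I would first apply Lemma~\ref{lem:ECC} (Part~1, and Parts~2--3 in the relevant small dimensions) to the induced $\Z_2^{r-j}$-action on $F_j$, producing a nontrivial involution $\iota$ with a fixed-point set component $N := (F_j)_x^\iota$ of codimension $k$ at most roughly $\tfrac{m_j+3}{4}$. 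One then chooses $N$ to be maximal among such components so that $N$ inherits an effective $\Z_2^{r-j-1}$-action with fixed point $x$, and one argues according to $k$.

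First I would dispose of the small-codimension cases $k \in \{1,2,3,4\}$ using the Codimension One through Four Lemmas. When $k=1$, the Codimension One Lemma gives $N \simeq S^{m}$ or $\RP^m$; when $k=2$, the Codimension Two Lemma (Part~(3), invoking a second involution of small codimension inside $F_j$, obtained via shortening as in Lemma~\ref{lem:ECCshorten} or a further application of Lemma~\ref{lem:ECC}) gives $N$ homotopy equivalent to a sphere, projective space, or lens space; when $k=3$ or $k=4$, the Codimension Three and Four Lemmas apply similarly. The point is that for these small $k$, whenever the universal cover of $N$ (or of $M$) comes out to be a cohomology sphere or $\CP^{\cdot}$ or lens space, we land in Conclusion~(1); the only way to escape is when $\widetilde N$ (hence $\widetilde M$) has the cohomology of $\HP^{\cdot}$, $E_\ell$, or $N_\ell$, and then one uses Theorem~\ref{thm:purple-OneSubmanifold}, Proposition~\ref{pro:purple}, the Borel Formula (Theorem~\ref{Borel}) and Observation~\ref{borel} to transport the four-periodicity up to $M$ and read off which of Conclusions~(2)--(4) occurs, keeping careful track of the relation between $j$, the dimension $n$, and the rank $r$ (the Euler-characteristic/free-$\Z_2$-action arguments of Theorem~\ref{smoothbound} and the Codimension lemmas pin down the finite quotient $\Gamma$ or $\Delta$ and force $r = \tfrac{n}{4}+$const).

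For the generic case $k \geq 5$ (or $k$ large relative to the small-dimension exceptions), the inclusion $N \subseteq F_j$ has codimension $k \leq \tfrac{m_j+3}{4}$, and I would verify that the rank bound survives descent: namely that $r' := r-1 \geq \tfrac{(n-k)+3+(j+1)}{4}$ and $(n-k) > 3(j+1)+5$, so that the induction hypothesis applies to $N = F_{j+1}$ with corank parameter $j+1$ — this is where the precise numerology $r \geq \tfrac{n+3+j}{4}$ and $n > 3j+5$ is designed to work, much as in Case~4 of the proof of Theorem~\ref{thm:nover2PLUS}, using integrality of $r$ to absorb the various ceilings and the $\delta_J$ corrections. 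Once the induction gives us the structure of $F_{j+1}$, the Connectedness Lemma (together with $k \leq \tfrac{m_j+3}{4}$, which makes $N \subseteq F_j$ at least $(m_j - 2k + 1) \geq (m_j+1)/2$-connected, hence $\tfrac12 m_j$-connected) lets us pull the conclusion back from $F_{j+1}$ to $F_j$, and simultaneously pull any four-periodicity back via Proposition~\ref{pro:purple}, whose codimension hypotheses $4k_1 \leq n+3$ and $g + k_1 + 2k_2 \leq n+3$ I would check are exactly met in this range.

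The main obstacle, as in Wilking's original argument, will be the bookkeeping at the boundary between the "collapse to Conclusion~(1)" cases and the "four-periodic exception" cases: one must show that once any $F_i$ along the tower fails to be a sphere/projective/lens space, the only alternatives are the four-periodic models $E_\ell$, $N_\ell$, $S^3/\Gamma \times \HP^{\cdot}$, etc., and that this obstruction, once it appears, propagates both up and down the tower consistently and forces $j$ and $r$ to take the exact stated values. This requires combining the Codimension Two/Three/Four Lemmas with Observation~\ref{borel} (to handle the case where the relevant inclusion is transverse, giving an $m$-connected inclusion and hence the cleaner Part~(1) hypotheses of those lemmas) and with the Borel Formula (to handle the case of three irreducible subrepresentations on a normal space, where one gets a codimension-one drop instead). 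The small-dimensional exceptions $n = 15,16,17$ are deliberately excluded here and handled separately in Theorem~\ref{thm:nover4-smalldim} precisely because the Hamming bound is too weak there and one must appeal to the code tables in the Appendix.
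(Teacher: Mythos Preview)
There is a fundamental confusion about which group acts effectively on $F_j$. By hypothesis, $\Z_2^{r-j}$ is the \emph{kernel} of the induced $\Z_2^r$-action on $F_j$; the effective action on $F_j$ is therefore by the quotient $\Z_2^j$ with $j \leq 4$. Lemma~\ref{lem:ECC} is useless against a rank-four group, so you cannot produce the involution $\iota$ as described. Worse, for $\iota \in \Z_2^{r-j}$ the symbol $(F_j)_x^\iota$ is just $F_j$ itself, since every such $\iota$ fixes $F_j$ pointwise; the fixed-point component of $\iota$ in $M$ is a \emph{super}manifold of $F_j$, not a submanifold.

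This reversal also breaks the induction at both ends. At $j=0$, the hypothesis is only $r \geq \tfrac{n+3}{4}$, which is far weaker than the $r > \tfrac{n}{2} + \delta_J(n)$ needed for Theorem~\ref{thm:nover2PLUS}; moreover $F_0$ is the fixed-point component of the full $\Z_2^r$ and carries no residual effective action, so it is in no sense ``exactly Theorem~\ref{thm:nover2PLUS}.'' At the other end, your step appeals to the hypothesis at level $j+1$, but the theorem only goes up to $j=4$, so there is no anchor.

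The paper does not induct on $j$ at all. For fixed $j$ and $F_j$, it builds an \emph{outward} tower $F_j \subseteq F_{j+1} \subseteq \cdots \subseteq M$ by choosing corank-one subgroups $\Z_2^{r-j-1} \subseteq \Z_2^{r-j}$ of the kernel so that $k_j = \codim(F_j \subseteq F_{j+1})$ is positive and minimal. The Borel Formula gives $\codim(F_j \subseteq M) \geq k_j(r-j)$, and the hypothesis $r \geq \tfrac{n+3+j}{4}$ then forces either $k_j \leq 3$ (where the Small Codimension Lemmas and Observation~\ref{borel} finish, after a further step to $F_{j+2}$) or $k_j = 4$ with exactly $r-j$ irreducible subrepresentations (where the four-periodic exceptions in Conclusions~(2)--(4) emerge, pinned down by the precise value of $j$). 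The ECC machinery enters only once, via Lemma~\ref{lem:ECCshorten}, to handle the single residual case $j=4$, $m_4=7$, $F_5^{11}$.
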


We remark that unlike the proof of Theorem \ref{thm:nover2}, the proof of Theorem \ref{thm:nover4PLUS} does not use induction over the dimension. 
We also remark that the examples $\HP^{r-2}$, $E_1^{4r-6}/\Z_2 = \CP^{2r-3}/\Z_2$, and $S^{4r-5}/\Gamma$ for $\Gamma \subseteq \Sp(1)$ in the $j =4$, $j =3$, and $j =2$ cases, respectively, can be  realized by positively curved manifolds with $\Z_2^r$ symmetry satisfying the hypotheses of Theorem \ref{thm:nover4PLUS} (see Section \ref{sec:Examples}). However, we do not know if there are examples of the last type for other space form groups $\Gamma$ or for the other topological types shown in the $j = 2$ and $j = 3$ cases.

\begin{proof}[Proof of Theorem \ref{thm:nover4PLUS}]
Fix $j \leq 4$ and $F_j^{m_j}$ as in the theorem. We are given that $F_j$ is the fixed-point set component at $x$ of a subgroup $\Z_2^{r-j} \subseteq \Z_2^r$ and admits an isometric and effective $\Z_2^j$ action that fixes $x$. 

Note that Part (1) holds if $m_j< 3$ by the classification of manifolds in these dimensions and the Gauss-Bonnet theorem. Part (1) also holds for  $m_3=3$ and $m_4\leq 6$ by Theorem \ref{thm:n} or Theorem \ref{thm:nover2}, so we may assume  
that 
\begin{equation}\label{mj}\begin{cases}
 m_j \geq 3 & 0\leq j\leq 2\\
 m_j\geq 4 & j=3\\
 m_j\geq 7 &j=4.\\
 \end{cases}
 \end{equation}

Choose $\Z_2^{r-(j+1)}\subseteq\Z_2^{r-j}$ such that the fixed-point set component $F_{j+1}^{m_{j+1}} \subseteq M^{\Z_2^{r-(j+1)}}$ containing $x$ has the property that
	\[k_j = \codim(F_j \subseteq F_{j+1})\]
is positive and minimal. Note that Conclusion (1) holds if $k_j = 1$ by the \hyperref[Codim1]{Codimension One Lemma}, so we assume $k_j \geq 2$.
Similarly, if $k_j = 2$, then Conclusion (1) follows by Part (1) of the \hyperref[codim2]{Codimension Two Lemma} or by a combination of Theorem \ref{thm:nover2} and the Connectedness lemma if $m_{j+1} = 5$ and $j \leq 2$, if $m_{j+1} = 6$ and $j = 3$, or if $m_{j+1} = 9$ and $j = 4$. 
Thus,  
we may assume 
\begin{equation}\label{mj+1}\begin{cases}
 m_{j+1} \geq 6 & 0\leq j\leq 2\\
 m_{j+1}\geq 7 & j=3\\
 m_{j+1}\geq 10 &j=4.\\
 \end{cases}
 \end{equation}

To complete the proof, we consider two cases 
according to the size of $k_j$. 
We remark that, in Case 1, we show that Part (1) holds.

\bigskip\noindent{\bf Case 1:} $2\leq k_j \leq 3$. 

Choose $\Z_2^{r-(j+2)} \subseteq \Z_2^{r-(j+1)}$ such that the fixed-point set component 
$F_{j+2}^{m_{j+2}} \subseteq M^{\Z_2^{r-(j+2)}}$ containing $x$ has the property that
	\[k_{j+1} = \codim(F_{j+1} \subseteq F_{j+2})\]
is positive and minimal. 

Choose $\Z_2^2 \subseteq \Z_2^r$ acting effectively on $F_{j+2}$ and
fixing $F_j$. If the number of pairwise inequivalent irreducible subrepresentations is two, then Observation \ref{borel} implies that the inclusion $F_j \subseteq F_{j+1}$ is $m_j$-connected. Applying either Part (2) of the \hyperref[codim2]{Codimension Two Lemma} or Part (1) of the \hyperref[codim3]{Codimension Three Lemma} shows that Conclusion (1) holds. We may assume therefore that this $\Z_2^2$ representation has exactly three pairwise inequivalent subrepresentations. Applying the \hyperref[Borel]{Borel Formula}, together with the minimality of $k_j$, we see that $k_{j+1} \geq 2k_j$.

Now, consider the isotropy representation of $\Z_2^{r-(j+1)}$ on the normal space to $F_{j+1}$, and let $q$ denote the number of pairwise inequivalent irreducible subrepresentations.  
We claim that $q = r - j - 1$ and
$k_{j+1} \leq 5$. 
Indeed, if $q \geq r - j$, then the \hyperref[Borel]{Borel Formula} implies that
	\[n - m_{j+1} \geq k_{j+1} q \geq 2 k_j (r - j) \geq 4\of{\frac{n+3-3j}{4}},\]
which contradicts the lower bound on $m_{j+1}$ in Display \ref{mj+1}. Similarly, if
$k_{j+1} \geq 6$, then the \hyperref[Borel]{Borel Formula} and the assumption $n > 3j+5$ imply
	\[n - m_{j+1} \geq k_{j+1} q \geq 4(r-j-1) + 2(r-j-1) \geq (n-1-3j) + 2(2),\]
which again contradicts the lower bound on $m_{j+1}$. 
Equipped with the facts that $q = r - j - 1$ and $k_{j+1} \leq 5$, we are ready to complete the proof in Case 1. 
Note in particular that the upper bound on $k_{j+1}$ implies that $k_j = 2$ and $m_{j+1} \geq k_{j+1} + 1$.

Look at the inclusion $F_{j+1} \subseteq F_{j+2}$. By Observation \ref{borel}, this inclusion is $m_{j+1}$-connected. Since $m_{j+1} \geq k_{j+1} + 1$, the \hyperref[PL]{Periodicity Lemma} implies that the universal cover $\widetilde{F_{j+2}}$ is $k_{j+1}$-periodic.

Suppose first that $m_{j+1} = k_{j+1} + 1$. In particular, we have $m_{j+1} = 6$ and $k_{j+1} = 5$ by the lower bound on $m_{j+1}$ and the upper bound on $k_{j+1}$. In addition, we see that $k_{j+1}$ divides $\dim (\widetilde{F_{j+2}}) - 1$, so Lemma 7.1 of \cite{Wil03} implies that $\widetilde{F_{j+2}}$ is a cohomology $S^{11}$. The \hyperref[CL]{Connectedness Lemma} then implies that $\widetilde{F_{j+1}}$ is a cohomology $S^6$ and that $\widetilde{F_j}$ is a cohomology $S^4$. Synge's theorem and Theorem \ref{thm:cohomology-to-homotopy} 
then imply that $F_4$ is homotopy equivalent to $S^4$ or $\RP^4$.

Suppose then that $m_{j+1} \geq k_{j+1} + 2$. The Connectedness and Periodicity Lemmas applied to the inclusion $\widetilde{F_j} \subseteq \widetilde{F_{j+1}}$ imply that the multiplication map
	\[H^{k_{j+1}-k_j}(\widetilde{F_{j+1}}) \to H^{k_{j+1}}(\widetilde{F_{j+1}})\]
is surjective. Together with the fact that the latter group is isomorphic to $H^{k_{j+1}}(\widetilde{F_{j+2}})$ by the \hyperref[CL]{Connectedness Lemma}, we find that the element in $H^*(\widetilde{F_{j+2}})$ inducing $k_{j+1}$-periodicity factors as the product of elements of degree two and $k_{j+1}-2$. Hence $\widetilde{F_{j+2}}$ has $2$-periodic cohomology, which implies it is a cohomology sphere or complex projective space.

Since its inclusion into $\widetilde{F_{j+2}}$ is $m_{j+1}$-connected, $\widetilde{F_{j+1}}$ is also a cohomology sphere or complex projective space. And since it has dimension $m_j \geq 3$ and codimension $k_j = 2$ in $\widetilde{F_{j+1}}$, $\widetilde{F_j}$ is similarly a cohomology sphere or complex projective space. In particular, the inclusion $\widetilde{F_j} \subseteq \widetilde{F_{j+1}}$ is $m_j$-connected and likewise for the inclusion $F_j \subseteq F_{j+1}$, so Part (1) of the \hyperref[codim2]{Codimension Two Lemma} implies that $F_j$ is homotopy equivalent to a sphere, a real or complex projective space, or a lens space.

\bigskip\noindent{\bf Case 2:} $k_j \geq 4$.

We apply the \hyperref[Borel]{Borel Formula} to the $\Z_2^{r-j}$ representation on the normal space at $x \in F_j$. By the minimality of $k_j$, we have 
\begin{equation}\label{basic}
\codim(F_j) \geq k_j(r-j) \geq 4(r-j)\geq n + 3 - 3j.
\end{equation}
In addition, if we assume that $k_j \geq 5$ or if the number of pairwise inequivalent irreducible subrepresentations is at least $r - j + 1$, then the estimate holds with an additional $r-j \geq 3$ or $k_j \geq 4$, respectively, added to the right-hand side of Display \ref{basic}. In either of these cases, we find that 
 $j\geq 3$, as otherwise $F_j$ either has negative dimension or dimension zero and an effective $\Z_2$ action. We similarly find that $m_j \leq 3$ if $j = 3$ and that $m_j \leq 6$ if $j = 4$, so Conclusion (1) holds, as noted earlier.

Therefore, we may assume that the isotropy representation at $x \in F_j$ has exactly $r - j$ such subrepresentations, and $k_j=4$. 
Going back to the estimate in Display \ref{basic}, 
we find that $j \in\{2,3,4\}$ since otherwise $F_j$ either has negative dimension or dimension zero and an effective $\Z_2$ action. We complete the proof by considering the possibilities for $j$.

First, suppose that $j = 2$. This implies that $m_2 \leq 3$. Since Conclusion (1) holds if $m_2 \leq 2$, we may assume that $m_2 = 3$ and hence that equality holds everywhere in the estimate in Display \ref{basic}. In particular, the isotropy representation on the normal space to $F_2^3$ takes the form
$$(\ep_1,\ldots,\ep_{r-2}) \mapsto (\ep_1 I_4,\ldots, \ep_{r-2} I_4),$$ 
where $\ep_i= \pm 1$ for $1\leq i\leq r-2$.
The \hyperref[CL]{Connectedness Lemma} and the \hyperref[PL]{Periodicity Lemma} imply that $\widetilde F_{4}$ has four-periodic cohomology, and the \hyperref[codim4]{Codimension Four Lemma} implies that $\widetilde F_{4}$ has the cohomology of $S^{m_4}$, $S^3 \times \HP^{\frac{m_{4} - 2}{4}}$ or $N_j^{m_{4}}$ as in the statement of the \hyperref[codim4]{Codimension Four Lemma}. By the \hyperref[CL]{Connectedness Lemma}, the same conclusions hold for $F_i$ for all $i >4$ and in particular for $M$. Since in addition, $\pi_1(M) \cong \pi_1(F_2^3)$ by the  \hyperref[CL]{Connectedness Lemma}, we find that $\pi_1(M)$ is a three-dimensional spherical space form group by the classification of positively curved three-dimensional manifolds (see \cite{Ham82}), so Conclusion (2) holds.

Second, we consider the case $j = 3$. Display \ref{basic} implies that $m_3 \leq 6$. Part (1) holds if $m_3 \leq 3$, as we saw earlier, or if $m_3 = 5$, by Theorem \ref{thm:nover2PLUS}. So we assume now that $m_3 = 4$ or $m_3 = 6$. 
In the former case, $F_3^4 \subseteq F_{4}^8$ is $4$-connected by Observation \ref{borel} and hence $F_4$ is a cohomology $S^8$, $\RP^8$, $\CP^4$, or $\HP^2$ by the Codimension Four Lemma. In the first three cases, Conclusion (1) holds immediately. In the last case, it follows that $F_3^4$ is $3$-connected and hence is a cohomology $S^4$, so again Conclusion (1) holds.
Suppose then that $m_3 = 6$. We have equality everywhere in Display \ref{basic}, and the proof follows with only slight modifications as in the previous paragraph, and we find that Conclusion (3) holds.

Third, assume $j = 4$ and $m_4 \neq 7$. The estimate in Display \ref{basic}, together with our assumptions that $m_4\geq 7$ and $m_4 \neq 7$, implies that $m_4 \in \{8,9\}$. If $m_4 = 9$, then the \hyperref[codim4]{Codimension Four Lemma} implies Conclusion (1) since $m_{5} = 13$ is congruent to one modulo four. Suppose then that $m_4 = 8$. The \hyperref[codim4]{Codimension Four Lemma} implies that either $F_4$ is a homotopy $S^8$, $\RP^8$, or $\CP^4$ or that $F_4$ is a cohomology $\HP^2$. In the first three cases, we have Conclusion (1), so we may assume we are in the last case. The inclusion $F_4 \subseteq F_{5}$ is $8$-connected by Observation \ref{borel}, so $F_{5}$ is a cohomology $\HP^3$. Using again that $r-j \geq 3$, we obtain further that $F_{6}$ is a cohomology $\HP^4$ by Part 2 of the \hyperref[CL]{Connectedness Lemma}. Finally, applying Theorem \ref{thm:purple-OneSubmanifold} to the inclusions $F_i \subseteq F_{i-1}$ implies that all of these codimensions are four and that $\widetilde F_i$ is a cohomology $\HP^{i-2}$ for all $4 \leq i \leq r$. In particular, Conclusion (4) holds.

Finally, we assume that $j = 4$ and $m_4 =7$. Then $F_4^7\subseteq F_5^{11}$ and by Part (3) of Lemma \ref{lem:ECCshorten}, we see that for the induced $\Z_2^5$ action on $N^{11}=F_5^{11}$ either there exists a non-trivial $\iota_1 \in \Z_2^5$ such that $\codim(N^{\iota_1}_x) \leq 3$, or 
there exist distinct, non-trivial $\tau_1, \tau_2 \in \Z_2^5$ such that $N^{\tau_1}_x$ and $N^{\tau_2}_x$ have codimension four and intersect non-transversely. 
In the first case, we apply the \hyperref[Codim1]{Codimension One Lemma}, the \hyperref[codim2]{Codimension Two Lemma}, or the \hyperref[codim3]{Codimension Three Lemma} to conclude that  
$N^{\iota}_x$ has cyclic fundamental group and has a sphere as universal cover.
The same then holds for $F_5^{11}$ and $F_4^7$ 
by 
Theorem \ref{thm:purple-OneSubmanifold}, so Conclusion (1) follows by Theorem \ref{thm:cohomology-to-homotopy}. 
In the second case, we argue similarly to see that $\pi_1(F_4^7)$ is cyclic and that $\widetilde F_5^{11}$ is four-connected. We cannot prove that $\widetilde F_5^{11}$ is a sphere, but nevertheless we have enough to conclude that $\widetilde F_4^7$ is a sphere by the \hyperref[CL]{Connectedness Lemma}, so once again Conclusion (1) holds.
\end{proof}

\begin{theorem} \label{thm:nover4-smalldim}
Let $M^n$ be a closed, positively curved manifold, and assume $\Z_2^r$ acts effectively by isometries on $M$ with fixed point $x$. 
Assume $F_4^{m_4}$ is the fixed-point set component at $x$ of some subgroup of $\Z_2^r$ such that the induced $\Z_2^r$-action on $F_4^{m_4}$ has kernel of rank $r - 4$. If $n\in\{15, 16, 17\}$ and $r \geq \tfrac{n+7}{4}$
	then 
one of the following holds:
	\begin{enumerate}
	\item $F_4^{m_4}$ is homotopy equivalent to $S^{m_4}$, $\RP^{m_4}$, $\CP^{\frac{m_4}{2}}$, or $S^{m_4}/\Z_k$ with $k \geq 3$.
		\smallskip\item 
	$M \sim_* \HP^{r-2}$, with $r-2=4$.
	\end{enumerate}

\end{theorem}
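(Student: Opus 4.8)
Since $r\geq\tfrac{n+7}{4}$ and $n\in\{15,16,17\}$, we have $r\geq 6$; the plan is to reduce to $r=6$, the case $r\geq 7$ being covered by the $j=4$ portion of the proof of Theorem~\ref{thm:nover4PLUS}. Indeed, when $r\geq 7$ essentially every appeal in that argument to the hypothesis $n>17$ either only produced the inequality $r\geq 7$ or a numerical bound that still holds for $n\leq 17$ once $r\geq 7$, and in that proof's Case~2 (where $k_4\geq 4$) the Borel estimate $\codim(F_4\subseteq M)\geq 4(r-4)\geq 12$ forces $m_4\leq n-12\leq 5$, so Conclusion~(1) follows from Theorem~\ref{thm:n} and Theorem~\ref{thm:nover2PLUS} applied to the effective $\Z_2^4$-action on $F_4$. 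Note also that $\HP^{r-2}$ has dimension $4(r-2)$, so Conclusion~(2) can occur only when $n=16$ and $r=6$; for $n\in\{15,17\}$ one must prove Conclusion~(1) always holds. Assume henceforth $r=6$.

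Apply the standard reductions to the effective $\Z_2^4$-action on $F_4^{m_4}$ fixing $x$: one has $m_4\geq 4$; Theorems~\ref{thm:n} and~\ref{thm:nover2PLUS} give Conclusion~(1) for $m_4\leq 6$; and $m_4=7\in J$ is handled exactly as in the last paragraph of the proof of Theorem~\ref{thm:nover4PLUS}, with Lemma~\ref{lem:ECCshorten} and the Appendix code tables supplying the estimates for the $\Z_2^5$-action on a minimal enlargement of $F_4^7$. So assume $m_4\geq 8$. Choose a fixed-point-set component $F_5^{m_5}$ of a corank-five subgroup with $F_4\subseteq F_5$ and $k_4:=\codim(F_4\subseteq F_5)>0$ minimal; the \hyperref[Codim1]{Codimension One Lemma} and \hyperref[codim2]{Codimension Two Lemma} let us further assume $k_4\geq 2$, so $10\leq m_5\leq n\leq 17$. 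Since $r=6$, the kernel $\Z_2^{r-4}=\Z_2^2$ of the $\Z_2^r$-action on $F_4$ acts effectively on $M$ fixing $F_4$, and the Borel formula together with the minimality of $k_4$ shows that its isotropy representation at $x$ on the normal space to $F_4$ in $M$ has either exactly two or exactly three pairwise inequivalent irreducible summands.

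In the two-summand case this is the minimal number $r-4$, so Observation~\ref{borel} makes $F_4\subseteq F_5$ an $m_4$-connected inclusion. When $k_4\leq 3$ this gives Conclusion~(1) by Part~(2) of the \hyperref[codim2]{Codimension Two Lemma} or Part~(1) of the \hyperref[codim3]{Codimension Three Lemma}. When $k_4\geq 4$, the Borel estimate forces $k_4=4$ and $m_4\in\{8,9\}$, the \hyperref[PL]{Periodicity Lemma} makes $F_5$ four-periodic, and the \hyperref[codim4]{Codimension Four Lemma} identifies $\widetilde F_4$ with $S^{m_4}$, $\RP^{m_4}$, $\CP^{m_4/2}$, or (only if $m_4=8$) a cohomology $\HP^2$; in the first cases Conclusion~(1) follows once $\pi_1(F_4)$ is computed from Synge's theorem ($m_4=8$) or as in Corollary~1.9 of~\cite{FRW13} ($m_4=9$) via Theorem~\ref{thm:cohomology-to-homotopy}, while in the $\HP^2$ case $F_5$ is a simply connected cohomology $\HP^3$, so Theorem~\ref{thm:purple-OneSubmanifold} applied to $F_5\subseteq M$ forces $M$ to be a simply connected cohomology $\HP^4=\HP^{r-2}$ --- consistent with $n=16$ (Conclusion~(2)) and self-contradictory for $n=17$, where $\codim(F_5\subseteq M)=5$ is not divisible by four. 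In the three-summand case, $\codim(F_4\subseteq M)\geq 3k_4$ together with $m_4\geq 8$ forces $k_4\leq 3$ and hence $m_5\leq 13$; applying Lemma~\ref{lem:ECCshorten} and the Appendix code tables to the effective $\Z_2^5$-action on $F_5^{m_5}$ produces a second fixed-point-set component $N'\subseteq F_5$ of sufficiently small codimension that Part~(3) of the \hyperref[codim2]{Codimension Two Lemma} (when $k_4=2$) or Part~(2) of the \hyperref[codim3]{Codimension Three Lemma} (when $k_4=3$) applies to $F_4\subseteq F_5$ with the $\Z_2^2$ generated by the involution fixing $F_4$ and the one fixing $N'$, giving Conclusion~(1).

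The step I expect to require the most care is extracting $N'$ in the three-summand case: one must read off from the Appendix code tables that $d(m_5-2,4)\leq\tfrac{m_5-1}{2}$ (and the analogous bounds involving $d(\cdot,4)$ for the $k_4=3$ instance and for the $m_4=7$ case), so that $N'$ genuinely has codimension at most $\tfrac{m_4+1}{2}$ in $F_5$. This table bookkeeping --- not any single conceptual point --- is the least transparent part of the argument in dimensions $15$, $16$, and $17$, precisely because the Connectedness Lemma alone does not supply enough connectedness and one is forced back onto the error-correcting-code estimates.
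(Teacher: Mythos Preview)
Your overall strategy matches the paper's: reduce to a $\Z_2^6$ action, split according to whether the isotropy representation of the kernel $\Z_2^2$ on the normal space to $F_4$ has two or three irreducible summands, and in each branch invoke the Codimension Lemmas together with Lemma~\ref{lem:ECCshorten}. The paper simply passes to a $\Z_2^6$ subgroup rather than arguing separately that the $r\geq 7$ case is already covered by the proof of Theorem~\ref{thm:nover4PLUS}; this is cleaner and avoids having to re-audit that proof line by line for $n\leq 17$.

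There is, however, a genuine gap in your treatment of $m_4=7$. The ``last paragraph of the proof of Theorem~\ref{thm:nover4PLUS}'' deals specifically with the configuration $F_4^7\subseteq F_5^{11}$, i.e.\ with $k_4=4$ in the two-summand case, and Lemma~\ref{lem:ECCshorten} is tailored to $m_5=11$ there. By dispatching $m_4=7$ before you even define $k_4$ and before the two/three-summand split, you miss at least two configurations. First, in the three-summand case one can have $m_4=7$, $k_4=3$, $m_5=10$; this needs the same Lemma~\ref{lem:ECCshorten} plus Codimension Three Lemma Part~(2) argument you give later for $m_4\geq 8$, but you have excluded $m_4=7$ from that analysis. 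Second, and more seriously, in the two-summand case with $n=17$ one can have $m_4=7$, $k_4=k_4'=5$, so $F_4^7\subseteq F_5^{12}$ is a codimension-five inclusion. None of the Codimension Lemmas applies directly, and Lemma~\ref{lem:ECCshorten} does not resolve it either. The paper handles this case by a separate ad hoc argument: the two-summand hypothesis makes $F_4^7\subseteq F_5^{12}$ an $m_4$-connected inclusion, and then one argues exactly as in the proof of the \hyperref[codim3]{Codimension Three Lemma} (using five-periodicity and the Steenrod-algebra step) to conclude that $\widetilde F_5$ and $\widetilde F_4$ are cohomology spheres. You should keep $m_4=7$ inside the case split rather than peeling it off early, and add this codimension-five argument explicitly.
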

\begin{proof} 
First observe that $r\geq 6$ for all values of $n$. Moreover, we may assume 
 that $m_4 \geq 7$,  as  the result holds by Theorem \ref{thm:nover2} for $m_4\leq 6$. 
 For simplicity, we pass to a subgroup $\Z_2^6 \subseteq \Z_2^r$. We modify the proof of Theorem \ref{thm:nover4PLUS}  slightly to cover the following two cases: Case 1, where the isotropy representation of  the kernel $\Z_2^2 \subseteq \Z_2^6$ of the induced action on $F_4$ has three non-trivial, pairwise inequivalent irreducible subrepresentations, and Case 2, where there are exactly two irreducible subrepresentations.

{\bf Case 1:} The isotropy representation of the kernel $\Z_2^2 \subseteq \Z_2^6$ of the induced action on $F_4$ has three non-trivial, pairwise inequivalent irreducible subrepresentations. By the \hyperref[Borel]{Borel Formula},
	\[10 \geq n - m_4 \geq 3k_4,\]
so we have $k_4 \leq 3$. Moreover, if $k_4 = 1$ or if $k_4 = 2$ and $m_4$ is odd, then Part (1) holds by the \hyperref[Codim1]{Codimension One Lemma} or Part (1) of the \hyperref[codim2]{Codimension Two Lemma}. This leaves the following cases:
	$$\begin{cases}
	k_4 = 2 &\mathrm{ and } \,\,\, m_4 \in \{8,10\};\,\,\, \mathrm{ and } \\
	k_4 = 3 &\mathrm{ and } \,\,\, m_4 \in \{7,8\}.
	\end{cases}$$
In all four of these cases, Part (2) of Lemma \ref{lem:ECCshorten}  implies the existence of another fixed-point set component $F_4'$ inside $F_5$ as in Part (3) of the \hyperref[codim2]{Codimension Two Lemma} when $k_4=2$ or Part 2 of the \hyperref[codim3]{Codimension Three Lemma} when $k_4=3$. Hence $F_4$ is a homotopy $S^{m_4}/\Z_l$ or $\CP^{\frac{m_4}{2}}$, as required for Conclusion (1).

{\bf Case 2:} The isotropy representation of $\Z_2^2$ has exactly two irreducible subrepresentations. By the \hyperref[Borel]{Borel Formula} 
$F_4 \subseteq F_5$ is an $m_4$-connected inclusion. There are three cases: $k_4 \leq 3$, $k_4=4$ and $k_4\geq 5$. 

If $k_4 \leq 3$, the topology of $F_4$ satisfies Part (1) by the \hyperref[Codim1]{Codimension One Lemma}, the \hyperref[codim2]{Codimension Two Lemma}, or the \hyperref[codim3]{Codimension Three Lemma}.

Next, suppose that $k_4 = 4$.  If $m_4 = 7$, then the  proof for  Case 2 when $m_4=7$ of Theorem \ref{thm:nover4PLUS} gives us that Part (1) holds. By the \hyperref[Borel]{Borel Formula}, $17\geq n=m_4+4+k_4'$. So, if $m_4 \geq 9$, then in fact $m_4 = 9$, $n=17$, and Part (1) follows by the \hyperref[codim4]{Codimension Four Lemma}. Finally if $m_4 = 8$ and Part (1) does not hold, then $F_4$ is a cohomology $\HP^2$ and $F_5$ is a cohomology $\HP^3$. Since $m_6 \leq n \leq 17$, it follows moreover that $F_6$ is a cohomology $\HP^4$ by Theorem \ref{thm:purple-OneSubmanifold},  $F_6 = M$, and  $r = 6$. Hence Part (2) holds.

To finish the proof,  suppose $k_4 \geq 5$. Since $17\geq n \geq m_4 + k_4 + k_4'$, we have $n = 17$, $k_4 = k_4' = 5$, and $m_4 = 7$.  Since the inclusion $F_4^7 \subseteq F_5^{12}$ is $m_4$-connected, we can now proceed exactly as in the proof of Part (1) of the \hyperref[codim3]{Codimension Three Lemma} to show that $\widetilde F_5$ and $\widetilde F_4$ are cohomology spheres. In particular,  $F_4$ is a cohomology sphere or a real projective space. Hence, once again Part (1) holds.
\end{proof}

As mentioned earlier, Theorems \ref{thm:nover4PLUS} and Theorem \ref{thm:nover4-smalldim}  almost imply Theorem \ref{thm:nover4} from the introduction, but some additional work is required.

\begin{proof}[Proof of Theorem \ref{thm:nover4}]
Let $M^n$ and $\Z_2^r$ be as in Theorem \ref{thm:nover4}. In particular, there is a fixed point $x$, and $r \geq \tfrac{n+3}{4} + 1$. We are also given a fixed-point set component $F^{m}$ at $x$ of some corank four subgroup $\Z_2^{r-4} \subseteq \Z_2^r$. Let $\Z_2^{r-j} \supseteq \Z_2^{r-4}$ denote the kernel of the induced $\Z_2^r$ action on $F^m$, and define $F_j^{m_j}$ to be $F^m$. 

If $n \leq 3j+5$, then the assumption in Theorem \ref{thm:nover4} that $n \geq 15$ implies that $j = 4$ and $n \in \{15,16,17\}$, so the result holds by Theorem \ref{thm:nover4-smalldim}.

If instead $n > 3j+5$, then all of the assumptions of Theorem \ref{thm:nover4PLUS} hold. Moreover, Conclusions (1) and (4) of Theorem \ref{thm:nover4PLUS} are exactly Conclusions (1) and (2) of Theorem \ref{thm:nover4}, so it suffices to show that Conclusions (2) and (3) do not occur. In fact, this is the case, since otherwise either $j = 2$ and $r = \tfrac{n+5}{4}$ or $j = 3$ and $r = \tfrac{n+6}{4}$, and this contradicts the assumption in Theorem \ref{thm:nover4} that $r \geq \tfrac{n+7}{4}$.
\end{proof}

\smallskip\section{Examples}\label{sec:Examples}
We conclude by exhibiting examples of $\Z_2$-torus actions on the spaces listed in Theorems \ref{thm:n}, \ref{thm:nover2}, and \ref{thm:nover4}. The table below summarizes the results of the constructions in this section. Observe that for each manifold in the first row, the value $r$ in the second row denotes the rank of a $\Z_2$-torus that acts effectively by isometries and has a fixed point. The groups $\Z_\ell$, $\Gamma$, and $\Gamma_p$ represent, respectively, a cyclic group of order at least three, a non-cyclic subgroup of $\Sp(1)$, and a non-cyclic $(2p-1)$-dimensional space form group of Type I (see Section \ref{sec:ExamplesC}).

\begin{table}[ht]\label{sum}\caption{Examples with  ${\bf \Z_2^r}$-Fixed Point Symmetry}
\[\begin{array}{|c||c||c|c||c|c|c||c|}\hline
\mathrm{Example}		& \thead{S^n, \RP^n} & \thead{\CP^{\frac n 2}}	&\thead{S^n/\Z_\ell} & \thead{\HP^{\frac n 4} }& \thead{\CP^{\frac n 2}/\Z_2} & \thead{S^n/\Gamma}	 & 
\thead{S^n/\Gamma_p}  \\\hline
\thead{r}	& \thead{n}		& \thead{\tfrac{n+2}{2}}	& \thead{\tfrac{n+1}{2}}	& \thead{\tfrac{n+8}{4}}	& \thead{\tfrac{n+6}{4}}	& \thead{\tfrac{n+5}{4}}		& \thead{\tfrac{n+1}{2p}} 	\\\hline
\end{array}
\]\end{table}

\subsection{The Maximal ${\bf \Z_2}$-Fixed Point Symmetry Rank Case}

We may also think of this case in terms of $S^{n}\subseteq \R^{n+1}$, on which $\gO(n+1)$ acts isometrically. Note that any isotropy subgroup contains a $\Z_2$-torus of rank $n$ fixing a line in $\R^{n+1}$ and hence a point in $S^n$ or $\RP^n$, respectively. 

\begin{example}[{\bf ${\bf S^n}$ and ${\bf \RP^n}$}]\label{sn}
Consider $S^n \subseteq \R^{n+1}$ equipped with the standard $\gO(n+1)$ action. Let $G\cong \Z_2^n$ be a subgroup of $\{\diag(\pm 1, \pm 1, \hdots, \pm 1)\} \cong \Z_2^{n+1} \subseteq \gO(n+1)$ with a $1$ replacing one of the $\pm1$ diagonal entries. Then, up to a change of basis, $G$ acts on $S^n$ fixing a point as follows:
$$\diag( 1, \pm 1, \hdots, \pm1) \cdot (x_0, x_1, \hdots, x_n)  =  ( x_0, \pm x_1, \hdots,\pm x_n).$$

This action commutes with the antipodal map on $S^n$ and induces a maximal effective $\Z_2^n$-action on $\RP^n$ with a fixed point.
\end{example}

\subsection{The Half-Maximal  ${\bf \Z_2}$-Fixed Point Symmetry Rank Case}\label{sec:ExamplesC}
Here, we  think of this case in terms of $S^{2m+1} \subseteq \C^{m+1}$ equipped with the standard $\gU(m+1) \rtimes \Z_2$ action, where the $\Z_2$-factor acts by complex conjugation. Note that $\gU(m+1) \rtimes \Z_2$ contains a maximal $\Z_2$-torus fixing a line in $\C^{m+1}$ and hence a point in $S^{2m+1}$, $\CP^m$, or a lens space. 

\begin{example}[{\bf $\CP^m$ and lens spaces}]\label{ex1}
Consider $S^{2m+1} \subseteq \C^{m+1}$ equipped with the standard $\gU(m+1) \rtimes \Z_2$ action, where the $\Z_2$-factor acts by complex conjugation. 
Let $H\cong \Z_2^{m}$ be a subgroup of $\{\diag(\pm 1, \pm 1, \hdots, \pm 1)\} \cong \Z_2^{m+1} \subseteq \gU(m+1)$ with a $1$ replacing one of the $\pm1$ diagonal entries. Then, up to a change of basis, $G \cong H \times \Z_2$ acts on $S^{2m+1}$ fixing a point as follows:
$$(\diag(1, \pm 1, \hdots, \pm 1), \ep) \cdot (z_1, z_2, \hdots, z_{m+1})  =   ( z_1^\ep, \pm z_2^\ep, \hdots, \pm z_{m+1}^\ep),$$
where $\ep\in \{\pm 1\}$ so that $z_i^\ep$ is either $z_i$ or $\bar{z}_i$.

This action commutes with the Hopf circle action on $S^{2m+1}$ and with any cyclic sub-action of odd order of the Hopf circle action. Thus it induces a maximal effective $\Z_2^{m+1}$-action on $\CP^m$ with a fixed point, as well as a maximal effective $\Z_2^{m+1}$-action on a $(2m+1)$-dimensional lens space with fundamental group of odd order with a fixed point. 
For lens spaces of even order, instead we consider the action given by $\Z_2^{m+1} \subseteq  \gU(m+1)$ modulo a diagonal $\Z_2$. This action commutes with any cyclic action of even order and induces a maximal effective $\Z_2^{m}$-action on a lens space with fundamental group of even order with a fixed point. Together with complex conjugation, we find the desired $\Z_2^{m+1}$-action once again.
\end{example}

Recall that the Type I spherical space forms, $S^{2d-1}/\Gamma$ have fundamental group for which every Sylow subgroup is cyclic and $\Gamma$ is defined as follows:
$$\Gamma\cong\langle\alpha, \beta|\alpha^a=\beta^b=1, \beta\alpha\beta^{-1}=\alpha^c\rangle,$$
for some $a, b, c\geq 1$ such that $\gcd(a, b) =\gcd(a, c-1) = 1$ and $c^b \equiv 1 \bmod a$ (see \cite{Wol}). To construct a free action of $\Gamma$ on $S^{2d-1}$ consider the representation 
 $\Gamma \subseteq \gU(d)$ given by:

\begin{align*}
\alpha&\mapsto \diag(e^{2 \pi i/a},  e^{2 \pi i c/a},\hdots e^{2 \pi i c^{d-1}/a})\\
\beta&\mapsto \begin{pmatrix} 0 & I\\
e^{2\pi i/d} &0\\
\end{pmatrix}.
\end{align*}
\medskip

Suppose that $m + 1 = rd$ for some integer $r$.  
Then we have an induced action of $\Gamma$  via the inclusion
	\[\Gamma \subseteq \gU(d) \subseteq \diag(\gU(d), \gU(d), \hdots, \gU(d)) \subseteq \gU(m+1)\]
that commutes with $G= \diag(\pm I_d, \pm I_d, \hdots, \pm I_d)\cong \Z_2^r$ given by the $d$-by-$d$ block embedding.  
Since we have treated the $d=1$ case in Example \ref{ex1}, we now consider the cases where $d\geq 2$. Note in this case that complex conjugation is not available, so the $\Z_2$-symmetry rank is one lower.

\begin{example}[{\bf Non-cyclic Type I spherical space forms}]\label{ex2} Let $H\cong \Z_2^{r-1}$ be a subgroup of $G =\{\diag(\pm I_d, \pm I_d, \hdots, \pm I_d)\} \subseteq \gU(m+1)$, with $I_d$ replacing one of the $\pm I_d$ diagonal entries. Then the induced $G$-action on $S^{2m+1}$ commutes 
with the $\Gamma$-action. If $\Gamma$ does not contain a $\Z_2$ subgroup, then $G$ induces an action on  $S^{2dr-1}/\Gamma$ with a fixed point.  As in the previous example, if there is a $\Z_2$ subgroup of $\Gamma$, then we consider a $\Z_2$ quotient of $G$ and again obtain a maximal, effective $\Z_2^{r}$-action on $S^{2dr-1}/\Gamma$ with a fixed point. 
\end{example}

\begin{remark}
If $d = 2$ in this example, then $\Gamma$ is a three-dimensional spherical space form group and we have a $\Z_2^r$ action with a fixed point on $S^{4r-1}/\Gamma$, with $r = \tfrac{\dim(M) + 1}{4}$. We see in the next set of examples that, for the subclass of groups $\Gamma$ that are subgroups of $\Sp(1)$, we can find a larger $\Z_2$-torus action with a fixed point.
\end{remark}

\begin{remark}
If $d$ is a prime in this example, and if no smaller prime divides $r$, then $S^{2dr-1}/\Gamma$ equipped with the $T^r$-action extending the given $\Z_2^r$-action provides an example of a positively curved manifold with non-cyclic fundamental group that has symmetry rank larger than any other known examples of this type (see Example 4.8 in \cite{Ken17}). Conjecturally, they form model spaces for the maximal symmetry rank problem in the context of non-cyclic fundamental groups (see \cite{W10,Ken17}).

Further examples of fundamental groups of positively curved manifolds are constructed in \cite{Sh,GSh,Baz}, and further obstructions in the case of torus symmetry are shown in \cite{Kha}. It would be interesting to analyze these results with $\Z_2$-torus symmetry in mind.
\end{remark}

\subsection{The Quarter-Maximal  ${\bf \Z_2}$-Fixed Point Symmetry Rank Case}

Here, we  think of this case in terms of  $S^{4m+3} \subset\HH^{m+1}$  for which $\Sp(m+1)\times_{\Z_2} \Sp(1)$ acts as follows:  $\Sp(m+1)$ acts on $\HH^{m+1}$ by matrix multiplication,  $\Sp(1)$ acts on the right by scalar multiplication, and  the normal $\Z_2$ subgroup generated by $(-I_{m+1}, -1) \in \Sp(m+1) \times \Sp(1)$ acts trivially. 
\begin{example}[{\bf
 ${\bf \HP^m}$, ${\bf \CP^{2m+1}/\Z_2}$, and ${\bf (4m+3)}$-dimensional space forms}]\label{1/4}
Choose a subgroup $H$ of the $\Sp(1)$ acting on the right, so that it contains $\{\pm 1\}$. 
Then $H$ is a subgroup of the Hopf action and so acts freely on $S^{4m+3}$. 
The quotient $S^{4m+3}/H$ then has positive curvature and an effective, isometric action by
	\[\Sp(m+1)\times_{\Z_2} (N_{\Sp(1)}(H)/H) \cong \Sp(m+1)/\Z_2 \times (N_{\Sp(1)}(H)/H).\]
Let $\Gamma \subseteq \Sp(1)$ be a finite subgroup of $\Sp(1)$. Then, taking $H$ to be one of $\Gamma$,  $\langle Q_8, S^1\rangle$, or $\Sp(1)$, we get an  $\Sp(m+1)/\Z_2$ action  
 on $S^{4m+3}/\Gamma$, $\CP^{2m+1}/\Z_2$, or $\HP^m$, respectively.

Note that $\Sp(m+1)$ contains a copy of $\Z_2^{m+1}$ consisting of diagonal matrices with $\pm1$ entries and a copy of $\Sp(1)\cong \{qI_{m+1}: q\in \Sp(1)\}$.
These subgroups commute with each other, and the latter contains $Q_8$. Projecting to $\Sp(m+1)/\Z_2$, we find commuting subgroups of $\Z_2^m \cong \Z_2^{m+1}/\Z_2$ and $Q_8/\Z_2 \cong \Z_2^2$, so indeed the three examples in the previous paragraph admit isometric, effective actions by $\Z_2^{m+2}$.

Additionally there are subgroups of corank two, three, or four, respectively, whose fixed-point set components are $S^3/\Gamma$, $\CP^3/\Z_2$, and $\HP^2$, respectively.
\end{example}

\appendix
\smallskip\section{Error correcting code bounds}\label{sec:Appendix}

In this appendix, we include a table with the error correcting codes needed for Lemma \ref{lem:ECC}. 
All values in this table can be found at 

\centerline{\scriptsize \url{http://www.codetables.de/BKLC/Tables.php?q=2&n0=1&n1=256&k0=1&k1=256}.}

We recall that $d(n, r)$ is the optimal upper bound of  the minimum Hamming weight of the elements in the image of an injective, linear map $\Z_2^r \to \Z_2^n$, and that setting $B = \{3, 4, 7, 11, 12, 23\}$, we define $\delta_B(n)$ to be $1$ for $n \in B$ and $0$ otherwise.
{\tiny
\begin{table}[htb]
\captionsetup{font=scriptsize}
\caption{ Minimum Weights for Error Correcting Codes for Part 1 of Lemma \ref{lem:ECC}} \label{table1} 
\begin{tabular}{|c|c|c|c||cc||c|c|c|c|}
\hline
\thead{$n$} & \thead{$r=\lceil\tfrac{n+1}{2} \rceil +\delta_B(n)$} & \thead{$d=\lfloor\tfrac{n+3}{4}\rfloor$} &\thead{$d(n, r)$}  && & \thead{$n$} & \thead{$r=\lceil\tfrac{n+1}{2} \rceil+\delta_B(n)$} & \thead{$d=\lfloor\tfrac{n+3}{4}\rfloor$ }& \thead{$d(n, r)$}\\
\hline
3      &  3     &1        & 1       &  &&   37     &         19       &   10           &  8  \\
\hline
  4    &    4    &       1  &   1      &&  & 38     &       20     &    10      &   8  \\
\hline
    5     &   3      &   2      &   2      &  & &  39        &      20      &  10         &  8--9 \\
 \hline
      6  &    4    &     2       &   2      &  &  & 40 &     21  &       10     &   8--9  \\
 \hline
        7&    5     &     2     &     2    &  &  &41&      21    &        11    &  9--10    \\
 \hline
        8&      5    &     2       &    2     &  &  & 42 &    22     &     11    &   8--10 \\
\hline
         9&     5      &    3        &    3     &  &  &  43& 22     &      11     &   9--10 \\
         \hline
      10   &      6   &    3     & 3        &  & &44&   23   &      11  &   9--10  \\
 \hline
       11 &     7    &     3    &    3     &  &  &  45& 23   &     12   & 10  \\
 \hline
       12 &     8    &    3          &   3 &  &  &  46 & 24 & 12 & 10   \\
 \hline
       13 &      7   &    4        &  4  &  &  & 47  & 24 & 12 & 11 \\
\hline
        14 &     8   &   4       &  4   &  &  & 48  & 25 & 12 & 10--11\\
\hline
        15 &      8 &   4    &  4  &  & & 49 &  25 &  13 &   10--12  \\
 \hline
      16  &       9  &    4     &      4   &  &  &50  &26& 13 &  10--12 \\
 \hline
       17 &        9      &    5   &   5   &  &  & 51 &26&13  &  10--12 \\
 \hline
        18&        10     &    5    &     4    &  &  & 52 & 27 & 13 & 10--12  \\
\hline
      19   &       10  &  5    &    5  &  &  &53&27  &14  & 10--12 \\
\hline
         20&       11  &  5   &   5  &  & & 54 & 28 &  14 & 10--12  \\
 \hline
 21       &        11  &   6         &   6  &  &  & 55 & 28 & 14 &  11--13  \\
 \hline
     22   &      12  &    6       &     6  &  &  &56 & 29 & 14  & 11-13 \\
 \hline
    23    &       13  &    6          &   6  &  &  &  57  &  29 & 15  &  12--14\\
\hline
        24 &      13   &   6      &    6   &  &  & 58 & 30  &  15 & 12--14\\
\hline
25         &      13    &   7    &  6  &  & &59 & 30  & 15 &  12--14 \\
 \hline
  26      &         14  &    7    &    6     &  &  &60 & 31 & 15 &  12--14    \\
 \hline
 27       &         14     &    7    &  7  &  &  &  61 & 31 & 16 & 12--14 \\
 \hline
  28      &        15   &    7           &   6  &  &  &62  & 32 & 16 & 12--14   \\
\hline
  29       &        15  &   8      &    7  &  &  &  63 & 32 & 16  &  12--15 \\
\hline
   30      &      16    &   8      &   7  &  & &  64 & 33 & 16 & 12--14 \\
 \hline
 31       &      16  &    8     &   8  & & &  65  & 33 & 17 &  12--15 \\
 \hline
   32     &       17        &   8     &   8  &  &  & 66 & 34 & 17  &  12--15 \\
 \hline
  33      &       17  &    9    &  8 & & 	&  67 & 34 & 17 &  12--16  \\
\hline
  34       &      18       &    9    &   8   	&& &68 & 35 & 17 & 12--16  \\
\hline
   35      &     18   &     9       &    8  	&&&  69& 35 & 18&  13--16\\
 \hline
  36      &     19        &   9   &    8  	&&&  70 & 36 & 18 &  13--16 \\
  
  \hline
\end{tabular}
\end{table}}
\
  
\bibliographystyle{alpha}

\begin{thebibliography}{999}

\bibitem{Ada60} J. F. Adams, {\em On the non-existence of elements of Hopf invariant one},
Ann. Math. {\bf  72} (1960), no. 1, 20--104.

\bibitem{AK20} M. Amann and L. Kennard, {\em Positive curvature and symmetry in small dimensions}, Comm. Contemporary Math. {\bf 22} no. 6 (2019), https://doi.org/10.1142/S0219199719500536.

\bibitem{Ash} R. B. Ash, {\em Information theory},
Dover Publications, Inc., New York, (1990).

\bibitem{Baz} Y.V. Baza\u{\i}kin, {\em A Manifold with Positive Sectional Curvature and Fundamental Group $\Z_3\oplus\Z_3$}, Sib. Math. J., {\bf 40} (1999), 834--836.

\bibitem{Ber} M. Berger, {\em Trois remarques sur les vari\'et\'es riemanniennes \`a courbure positive},
C. R. Math. Acad. Sci. Paris, {\bf 263} (1966), A76--A78.


\bibitem{Bre72} G. Bredon,  {\em Introduction to compact transformation groups}, Academic Press {\bf 48} (1972). 

\bibitem{Bor} A. Borel, {\em Seminar on Transformation Groups}, {\bf 46}, Annals of Mathematics Series, Princeton University Press (1961).
\bibitem{Fan08} F. Fang, {\em Finite isometry groups of $4$-manifolds with positive sectional curvature}, 
Math Z., {\bf 259} (2008), 643--656, DOI 10.1007/s00209-007-0242-0.

\bibitem{FG} F. Fang, K. Grove, {\em Reflection groups in non-negative curvature}, J. Diff. Geom., {\bf 102} no. 2 (2016), 179--205.


\bibitem{FR03} F. Fang and X. Rong, {\em Homeomorphism classification of positively curved manifolds with almost maximal symmetry rank}, Math. Ann. {\bf 332} (2005), 81--101.

\bibitem{FR04} F. Fang and X. Rong,   {\em Positively curved manifolds with maximal discrete symmetry rank}, Am. Jour.  of Math. {\bf 126}  (2004) 227--245.

\bibitem{FRW13} P. Frank, X. Rong, and Y. Wang, {\em Fundamental groups of positively curved manifolds with symmetry}, Math. Ann., {\bf 355}  (2013), 1425--1441.


\bibitem{GS} K. Grove and C. Searle,  {\em Positively curved manifolds with maximal symmetry rank}, Jour. of Pure and Appl. Alg. {\bf 91}  (1994), 137--142.

\bibitem{GSh} K. Grove, K. Shankar, {\em Rank two fundamental groups of positively curved manifolds}, J. Geom. Anal. {\bf 10} no. 4 (2000), 679--682.

\bibitem{Ham82} R. S. Hamilton, {\em Three-manifolds with positive Ricci curvature},
Differ. Geom. {\bf 17} (1982), 255--306.

\bibitem{Hat02} A. Hatcher, {\em Algebraic Topology},
Cambridge University Press, Cambridge, 2002.

\bibitem{Hic97} A. Hicks, {\em Group Actions and the Topology of Nonnegatively Curved 4-Manifolds},  Illinois J. Math. {\bf 41}  no. 3 (1997), 421--437.

\bibitem{HirMil} W. Hirsch and J. Milnor, 
{\em Some curious involutions of spheres}, Bull. Amer. Math. Soc.,
 {\bf 70}
      (1964),
     372--377.
    

\bibitem{Ken13} L. Kennard, {\em On the Hopf conjecture with symmetry}, Geom. Topol. {\bf 17} (2013) 563--593.

\bibitem{Ken17} L. Kennard, {\em Fundamental groups of manifolds with positive sectional curvature and torus symmetry},  J. Geom. Anal. {\bf 27} (2017), no. 4, 2894--2925.

\bibitem{KWW} L. Kennard, M. Wiemeler, and B. Wilking, {\em Splitting of torus representations and applications in the Grove symmetry program}, arXiv:2106.14723  (2021).

\bibitem{Kha} E. Khalili Samani, {\em Obstructions to free actions on {B}azaikin spaces},
Transform. Groups, {\bf 27} (2022), no. 4 1515--1532.

\bibitem{KL09}  J. H. Kim, H. K. Lee, {\em On non-negatively curved 4-manifolds with discrete symmetry},  Acta Math. Hungar., {\bf 125} (3) (2009), 201--208. DOI: 10.1007/s10474-009-8237-4. 

\bibitem{N} J. Nienhaus, {\em An improved four-periodicity theorem and a conjecture of Hopf with symmetry}, arXiv:2211.13151 (2022).



\bibitem{RS05} X. Rong and X. Su, {\em The Hopf Conjecture for Manifolds with Abelian Group Actions}, 
Comm. Contemp. Math. {\bf 7} no. 1 (2005)  121--136.


\bibitem{Sh} K. Shankar, {\em On the fundamental groups of positively curved manifolds}, J. Diff. Geom., {\bf 49} no. 1 (1998), 179--182.


\bibitem{Sm} P. A. Smith, {\em Transformations of finite period}, Ann. of Math. 
{\bf 39} no. 1 (1938), 127--164.

\bibitem{SW08} X. Su and Y. Wang, {\em The Hopf Conjecture for positively curved manifolds with discrete abelian group actions},
Differential Geometry and its Applications, vol. 26, issue 3  (2008), pp. 313--322.

\bibitem{Sug82} K. Sugahara, {\em The isometry group and the diameter of a Riemannian manifold with positive curvature},  Math. Japon., {\bf 27} (1982), 631--634.
 
 \bibitem{Wa} C. T. C. Wall, {\em Surgery on Compact Manifolds}, Mathematical Surveys and Monographs, Amer. Math. Society, {\bf 69}, 2nd edition (1998).
 
 \bibitem{W10} Y. Wang, {\em Fundamental groups of closed positively curved manifolds with
              almost discrete abelian group actions},
 Acta Mathematica Scientia. Series B. English Edition, {\bf 30}, no. 1 (2010),
   203--207 {\scriptsize\url{https://doi.org/10.1016/S0252-9602(10)60037-9}}.

\bibitem{Wil03} 
B. Wilking  {\em Torus actions on manifolds of positive sectional curvature},  
Acta Math., {\bf 191}  no. 2  (2003) 259--297. 


\bibitem{Wol} J.A. Wolf, {\em Spaces of constant curvature}, American Mathematical Society (2011).

\bibitem{Yan94} D. G. Yang  {\em On the Topology of Nonnegatively Curved Simply Connected 4-manifolds 
with Discrete Symmetry}, 
Duke Math. J., {\bf 74}  (1994),  531--545.


\end{thebibliography}

\end{document}